\numberwithin{equation}{section}
\newtheorem{theorem}{Theorem}[section]
\newtheorem{lemma}[theorem]{Lemma}
\newtheorem{proposition}[theorem]{Proposition}
\newtheorem{remark}[theorem]{Remark}
\newcommand{\esum}{\sideset{}{^+}\sum}
\newcommand{\osum}{\sideset{}{^-}\sum}
\newcommand{\doublewidetilde}[1]{{%
  \mathpalette\double@widetilde{#1}%
}}
\newcommand{\double@widetilde}[2]{%
  \sbox\z@{$\m@th#1\widetilde{#2}$}%
  \ht\z@=.9\ht\z@
  \widetilde{\box\z@}%
}
\title{A central limit theorem for partitions involving generalised divisor functions}
\author[1]{Madhuparna Das}
\address{Department of Mathematics, University of Exeter, Exeter, EX4 4QE, United Kingdom}
\email{md679@exeter.ac.uk}
\author[2]{Nicolas Robles}
\address{RAND Corporation, Engineering and Applied Sciences, 1200 S Hayes St, Arlington, VA, 22202, USA}
\email{nrobles@rand.org}
\date{}
\subjclass[2020]{11P82, 05A17, 60F05, 11M41}
\keywords{Central limit theorem, partition function, Dirichlet series with a shifted coefficient, Euler product, saddle-point method, converse mapping}
\begin{document}

\maketitle

\begin{center}
{\it To Bruce Berndt, on the occasion of his 86th birthday}
\end{center}

\begin{abstract}
We define an $f$-restricted partition $p_f(n,k)$ of fixed length $k$ given by the bivariate generating series
\begin{align*}
Q_f(z,u) 
\coloneqq 1+\sum_{n=1}^{\infty}\sum_{k=1}^{\infty} p_f(n,k) u^kz^n
=\prod_{k=1}^{\infty}(1+uz^k)^{\Delta_f(k)},   
\end{align*}
where $\Delta_f(n)=f(n+1)-f(n)$. In this article, we establish a central limit theorem for the number of summands in such partitions when $f(n)=\sigma_r(n)$ denotes the generalised divisor function, defined as $\sigma_r(n)=\sum_{d|n}d^r$ for integer $r\geq 2$. This can be considered as a generalisation of the work of Lipnik, Madritsch, and Tichy, who previously studied this problem for $f(n)=\lfloor{n}^{\alpha}\rfloor$ with $0<\alpha<1$. A key element of our proof relies on the analytic behaviour of the Dirichlet series
\begin{align*}
\sum_{n=1}^{\infty}\frac{\sigma_r(n+1)}{n^s},    
\end{align*}
for $\mathrm{Re}(s)>1$. We study this problem employing the identity involving the Ramanujan sum. Furthermore, we analyse the Euler product arising from the above Dirichlet series by adopting the argument of Alkan, Ledoan and Zaharescu.
\end{abstract}

\section{Introduction}

An arithmetic function $f:\mathbb{N}\to\mathbb{R}_{>0}$ is called multiplicative if $f(mn)=f(m)f(n)$ whenever $(m,n)=1$. An integer partition $p_f(n)$ of a positive integer $n$ into the function $f$ counts the number of ways to write $n$ as follows
\begin{align}\label{partitiondef}
n=\lfloor{f(n_1)}\rfloor+\lfloor{f(n_2)}\rfloor+\cdots+\lfloor{f(n_r)}\rfloor ,   
\end{align}
where $f(n_j)$ is a real positive valued multiplicative function and $n_1\leq n_2\leq\cdots\leq n_r$. It is a well-known fact that one can write the above expression with the generating series 
\begin{align}\label{generatingseries}
{Q_{f}}(z) 
=1+\sum_{n=1}^{\infty}{{p_{f}}(n){z^n}}  
=\prod_{n\in\mathbb{N}^{*}} (1-z^{\lfloor{f(n)}\rfloor})^{-1} \quad (|z|<1).
\end{align}
\noindent
The asymptotic formula for integer partitions, that is, when $f(n)=n$ for all $n\in\mathbb{N}$ in~\eqref{generatingseries}, was first established by Hardy and Ramanujan~\cite{hardyramanujan}. Over the past century, the study of partition asymptotics has attracted the attention of numerous mathematicians; see, for example,~\cite{taylor,madhu,semiprime,vaughan}. The most general framework in this context was studied by Debruyne and Tenenbaum~\cite{debten}.

\medskip

Let $p_{f}(n)$ and $p_{f}(n,r)$ denote, respectively, the ``restricted" partition of $n$ and and the ``restricted" partition of $n$ with fixed length $r$ as defined in~\eqref{partitiondef}, where $n_1\leq n_2\leq\ldots\leq n_r$. Li and Chen~\cite[Theorem 1.1]{lichen2} showed
an alternative approach to analysing the generating series~\eqref{generatingseries}, which is through the study of
\begin{align}\label{generatingserieswithgapfunction}
Q_f(z)
=\sum_{n=0}^{\infty}p_f(n)z^n
=\prod_{n=1}^{\infty}(1-z^n)^{-\Delta_f(n)}\quad (|z|<1).
\end{align}
where 
\begin{align}\label{gnsetdefinition}
\Delta_f(n)=\#\{m: m\in\mathbb{N}\;\text{and}\;\lfloor{
f(m)}\rfloor=n\}. 
\end{align}

%One of the objectives of this paper is to derive an asymptotic formula for the fixed length partition $p_f(n,r)$ for $r\geq 1$ and $f$ is taken to be the generalized divisor function under certain conditions. Nonetheless, our approach can be extended to a broader class of multiplicative functions that satisfy an identity involving the Ramanujan sum.

%To explore this problem, we need to analyze the partition function $p_f(n)$ and $p_f(n,r)$ for a fixed length $r\geq 1$. We later prove that, 

A related problem was studied by Balasubramanian and Luca~\cite{baluluca}, who investigated the number of solutions to $|\{m: m,n\in\mathbb{N}\quad\text{and}\quad m=\mathcal{F}(n)\}|$, where $\mathcal{F}(n)$ denotes the number of unordered factorisation as the products of integer factors greater than 1, by examining the partitions of the form $n=\lfloor{\sqrt{n_1}}\rfloor+\cdots+\lfloor{\sqrt{n_r}}\rfloor$. Later, this result was thoroughly investigated by Li and Chen~\cite{lichen}.

\medskip

Our main objective of this paper is to establish a central limit theorem for the random variable $\varpi_{f,n}$, which counts the number of summands in a random $f$-restricted partition of $n$ (as defined in Section~\ref{mainresults}) with respect to the multiplicative function $f$. This concept was first introduced by Erd\H{o}s and Lehner~\cite{erdoslehner}, where they studied the distribution of ratio $p(n,r)/p(n)$, with $r=(2\pi^2/3)^{-\tfrac{1}{2}}\sqrt{n}\log n+n\sqrt{n}$ as a function of $n$ and $p(n)$ denoting number of integer partitions of $n$. In this context, it is noteworthy that the weighted moments of integer partitions were studied by Richmond~\cite{richmond1,richmond2}. Let $p_{A}(n,r)$ denotes the partition of $n$ into $r$ summands from a given set $A$. The $k$-th moment is then defined by
\begin{align}\label{weightedmoments}
t^k_A(n)
=\sum_{r=1}^{n}r^kp_A(n,r),
\end{align}
for arbitrary fixed $k$ as $n\to\infty$. Furthermore, Richmond remarked that the study of these weighted moments supports the work of Erd\H{o}s and Lehner.

\medskip

In a recent study, Lipnik, Madritsch, and Tichy~\cite{lipniketal} established a central limit theorem for the partition function $p_f(n)$ with $f(n)=\lfloor{n^{\alpha}}\rfloor$ for $0<\alpha<1$. For our main result, we have considered $f$ to be the generalised divisor function $\sigma_r(n)$ for integers $r\geq 2$.
However, our approach is applicable for a wider class of multiplicative functions, which we explore in detail in later sections.  

The proof presented in~\cite{lipniketal} used a probabilistic approach and Curtiss' theorem to obtain the asymptotics of the moment generating function. Furthermore, the application of converse mapping to study the analytic behaviour of the associated Dirichlet series makes their proof simple and elegant. This also allows us to choose a complex weight function in the bivariate generating series and deal with a challenging associated Dirichlet series, as the remaining components of the proof are comparatively less involved than those encountered in prior works on this problem.

\medskip

Studying the limiting distribution of $p_f(n)$ for $f$ being any multiplicative function has a different flavour compared to previous works. Our analysis relies on additive combinatorics, due to the involvement of the gap properties $(|f(n+1)-f(n)|)_{n}$ of the multiplicative function $f$. While we establish a central limit theorem, deriving an asymptotic formula for~\eqref{weightedmoments} in the setting of $f$-restricted partitions would present additional difficulties, largely due to the behaviour of the gap function $(|f(n+1)-f(n)|)_{n}$. However, it appears plausible that an upper bound could be obtained in future work.

%Although obtaining the asymptotic estimate for $p_f(n)$ is somewhat more straightforward than proving the central limit theorem, we employ the saddle-point method for both results, albeit within different frameworks.

\medskip

Throughout this paper $f\sim g$ means ${f(x)}/{g(x)}\to 1$ as $x\to\infty$. Additionally, $f=o(g)$ and $f=O(g)$ denote $\left|{f(x)}/{g(x)}\right|\to 0$ and $|f(x)|\leq C|g(x)|$ as $x\to\infty$, respectively, for a suitable constant $C>0$.

\section{The Main Results}\label{mainresults}

Let 
\begin{align}\label{ramanujansum} 
c_m(n) 
=\sum_{\substack{b(\bmod{m})\\(b,m)=1}} e\left(\frac{bn}{m}\right), 
\end{align}
denote the Ramanujan sum, and the $r$-th divisor function is given by\footnote{In Section~\ref{EstimateofOmega1f}, we provide a detailed rationale for selecting this particular function within the context of our analysis.}
\begin{align*} 
\sigma_r(n) 
=\sum_{d|n}d^r. 
\end{align*}

\begin{comment}
\begin{theorem}\label{partitiontheorem}
Let $p_f(n)$ denote the weighted integer partition of $n$, where $f$ be the Ramanujan sums or the $k$-th divisor function. Then, as $n\to\infty$, we have
\begin{align*}
p_f(n)\sim    
\end{align*}
\end{theorem}    
\end{comment}

Define
\begin{align}\label{gfunctiondefintion}
\Delta_f(n)
\coloneqq f(n+1)-f(n).
\end{align}

\noindent
Let $f$ be a positive integer valued multiplicative function. We define a $f$-restricted partition $p_f(n)$ such that the fixed-length partition $p_f(n,k)$ is associated with the bivariate generating series
\begin{align}\label{bivariategeneratingseries} 
Q_f(z,u) 
\coloneqq 1+\sum_{n=1}^{\infty}\sum_{k=1}^{\infty} p_f(n,k) u^kz^n
=\prod_{k=1}^{\infty}(1+uz^k)^{\Delta_f(k)}. 
\end{align}

\medskip

Let $\Pi(n)$ denote the collection of $f$-restricted partitions and let $p_f(n)=|\Pi(n)|$ and $p_f(n,k)=|\Pi(n,k)|$ denote the cardinalities of the sets $\Pi(n)$ and $\Pi(n,k)$, respectively. We define the random variable $\varpi_{f,n}$, which counts the number of summands in a random $f$-restricted partition. The probability distribution of $\varpi_{f,n}$ is given by
\begin{align*}
\mathbb{P}(\varpi_{f,n}=k)
=\frac{p_f(n,k)}{p_f(n)}.
\end{align*}

\noindent
We derive the central limit theorem for the random variable $\varpi_{f,n}$ by examining the bivariate generating function $Q_f(z,u)$, which is defined in~\eqref{bivariategeneratingseries}.

\medskip

In the subsequent lemma, we establish the connection between the generating series $Q_f(z,u)$ and the random variable $\varpi_{f,n}$.

\begin{lemma}\label{generatingfunctionlemma}
Consider the bivariate generating series $Q_f(z,u)$ as defined in~\eqref{bivariategeneratingseries}. The following relation holds
\begin{align*}
Q_f(z,u)
=\prod_{k=1}^{\infty}(1+uz^{k})^{\Delta_f(k)}
=1+\sum_{n=1}^{\infty}p_f(n)\mathbb{E}(u^{\varpi_{f,n}})z^n.
\end{align*}
\end{lemma}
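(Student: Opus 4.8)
The plan is to unwind both sides of the claimed identity and match coefficients of $z^n$. Starting from the product definition in~\eqref{bivariategeneratingseries}, expanding each finite binomial factor $(1+uz^k)^{\Delta_f(k)}$ and multiplying out, a monomial $u^k z^n$ arises precisely by choosing, for each size $k$, a sub-multiset of the $\Delta_f(k)$ available ``slots'' (equivalently, by the combinatorial interpretation underlying~\eqref{gnsetdefinition}--\eqref{gfunctiondefintion}, a choice of parts) whose sizes sum to $n$ and whose total count is $k$. Thus the coefficient of $u^k z^n$ in $Q_f(z,u)$ is exactly $p_f(n,k)=|\Pi(n,k)|$, which is the content of the first equality in~\eqref{bivariategeneratingseries}; I would simply cite that as the definition.

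Next I would carry out the bookkeeping on the right-hand side. By definition of the random variable $\varpi_{f,n}$ on the finite probability space $\Pi(n)$ with uniform weights $1/p_f(n)$, we have
\begin{align*}
\mathbb{E}\bigl(u^{\varpi_{f,n}}\bigr)
=\sum_{\lambda\in\Pi(n)} u^{\varpi_{f,n}(\lambda)}\,\mathbb{P}(\{\lambda\})
=\frac{1}{p_f(n)}\sum_{k=1}^{n} u^k\,\#\{\lambda\in\Pi(n):\varpi_{f,n}(\lambda)=k\}
=\frac{1}{p_f(n)}\sum_{k=1}^{n} p_f(n,k)\,u^k,
\end{align*}
where I have used $\mathbb{P}(\varpi_{f,n}=k)=p_f(n,k)/p_f(n)$ and the fact that a partition of $n$ has at most $n$ summands. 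Multiplying by $p_f(n)$ gives $p_f(n)\,\mathbb{E}(u^{\varpi_{f,n}})=\sum_{k\geq 1} p_f(n,k)u^k$ (the sum being finite). Summing over $n\geq 1$ and adding the constant term $1$ (corresponding to $n=0$, the empty partition) yields
\begin{align*}
1+\sum_{n=1}^{\infty}p_f(n)\,\mathbb{E}\bigl(u^{\varpi_{f,n}}\bigr)z^n
=1+\sum_{n=1}^{\infty}\sum_{k=1}^{\infty} p_f(n,k)\,u^k z^n,
\end{align*}
which is precisely $Q_f(z,u)$ by~\eqref{bivariategeneratingseries}. This establishes the lemma.

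There is essentially no analytic obstacle here: the identity is a formal rearrangement, and the only point requiring a word of care is the legitimacy of interchanging the order of summation, which is fine as a formal power series identity in $z$ (for each fixed $n$ only finitely many $k$ contribute, and for each $n$ the inner polynomial in $u$ is finite) and converges for $|z|<1$ and, say, $|u|\le 1$ by domination against $\prod_k(1+|z|^k)^{\Delta_f(k)}$. I would state it as a formal-power-series identity to avoid any convergence digression. The mildly delicate step, if any, is confirming that the coefficient extraction on the product side genuinely counts $\Pi(n,k)$ with the same convention for $\Delta_f$ as in~\eqref{gfunctiondefintion}; since the paper takes~\eqref{bivariategeneratingseries} as the defining relation for $p_f(n,k)$, this is immediate and the lemma reduces to the elementary manipulation above.
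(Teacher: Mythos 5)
Your proposal is correct and is essentially the paper's own argument read in reverse: the paper factors $p_f(n)$ out of the double sum in~\eqref{bivariategeneratingseries} and identifies $\sum_k \frac{p_f(n,k)}{p_f(n)}u^k$ with $\mathbb{E}(u^{\varpi_{f,n}})$, while you expand the expectation and reassemble the double sum, which is the same elementary rearrangement. The extra remarks on coefficient extraction and formal convergence are harmless but not needed beyond what the paper's short proof records.
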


\begin{proof}
The first relation follows directly from~\eqref{bivariategeneratingseries}. Furthermore rearranging the terms in~\eqref{bivariategeneratingseries} yields
\begin{align*}
Q_f(z,u)
=&1+\sum_{n=1}^{\infty}\sum_{k=1}^{\infty}p_f(n,k)u^kz^n\\
=&1+\sum_{n=1}^{\infty}p_f(n)z^n\sum_{k=1}^{\infty}\frac{p_f(n,k)}{p_f(n)}u^k\\
=&1+\sum_{n=1}^{\infty}p_f(n)\mathbb{E}(u^{\varpi_{f,n}})z^n,
\end{align*}
thus concluding the lemma.
\end{proof}

Prior to stating our main result, we note that Lipnik et al.~\cite{lipniketal} considered the function $\Delta_f(n)=\lfloor{(n+1)^{\alpha}}\rfloor-\lfloor{n^{\alpha}}\rfloor$ for any real $0<\alpha<1$. We replace their weight function $\Delta_f(n)$ with $\sigma_r(n+1)-\sigma_r(n)$. Furthermore, an analogous result may be obtained using our method for any positive integer valued multiplicative function that satisfies an identity involving the Ramanujan sum.

\begin{theorem}\label{cltforsigma}
Let $f(n)=\sigma_r(n)$ denote the $r$-th divisor function for fixed integer $r\geq 2$. Furthermore, let $\varpi_{f,n}$ be the random variable counting the number of summands in a random $f$-restricted partition of $n$ as defined in \eqref{bivariategeneratingseries}. Then, as $n\to\infty$, $\varpi_{f,n}$ is asymptotically normally distributed with mean $\mu_{f,n}$ and variance $\nu^2_{f,n}$ i.e.,
\begin{align*}
\mathbb{P}\left(\frac{\varpi_{f,n}-\mu_{f,n}}{\nu_{f,n}}<x\right)=\frac{1}{\sqrt{2\pi}}\int_{-\infty}^{x}e^{-{t^2}/{2}}dt+o(1),    
\end{align*}
uniformly for all $x$ as $n\to\infty$. The mean $\mu_{f,n}$ and the variance $\nu^2_{f,n}$ are given by
\begin{align}\label{meansigma}
\mu_{f,n}
=\sum_{k=1}^{\infty}\frac{\Delta_f(k)}{e^{\eta k}+1}
=O(C_{\mu}(r)n^{\frac{r+1}{r+2}})
\end{align}
and
\begin{align}\label{variancesigma}
\nu^2_{f,n}
=\sum_{k=1}^{\infty}\frac{\Delta_f(k)e^{\eta k}}{(e^{\eta k}+1)^2}-\frac{\left(\sum_{k=1}^{\infty}\frac{\Delta_f(k)ke^{\eta k}}{(e^{\eta k}+1)^2}\right)^2}{\sum_{k=1}^{\infty}\frac{\Delta_f(k)k^2e^{\eta k}}{(e^{\eta k}+1)^2}} 
=O(C_{\sigma}(r)n^{\frac{r+1}{r+2}}),
\end{align}
where the constants $C_{\mu}(r)$ and $C_{\sigma}(r)$ are defined in~\eqref{meanconstant} and~\eqref{varianceconstant}, respectively, and $\eta$ is the implicit solution of
\begin{align*}
n=\sum_{k\geq 1}\frac{k}{e^{\eta k}+1}.   
\end{align*}

\noindent
Additionally, the tail of the distribution is given by
\begin{align*}
\mathbb{P}\left(\frac{\varpi_{f,n}-\mu_{f,n}}{\nu_{f,n}}\geq x\right)\leq 
\begin{cases}
e^{-x^2/2}\left(1+O((\log n)^{-3})\right) & \text{if $0\leq x\leq n^{\frac{r+1}{6(r+2)}}/\log n$},\\
e^{-n^{\frac{r+1}{6(r+2)}}x/2}\left(1+O((\log n)^{-3})\right) & \text{if $x\geq n^{\frac{r+1}{6(r+2)}}/\log n$},
\end{cases}
\end{align*}
and an analogous statement holds for $\mathbb{P}\left(\frac{\varpi_{f,n}-\mu_{f,n}}{\nu_{f,n}}\leq -x\right)$.
\end{theorem}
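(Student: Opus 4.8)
The plan is to follow the classical saddle-point / Hayman-type scheme for partition central limit theorems, in the spirit of Lipnik--Madritsch--Tichy, with the new arithmetic input coming from the Dirichlet series $D_r(s)=\sum_{n\ge 1}\sigma_r(n+1)n^{-s}$. First I would set $z=e^{-\tau}$ with $\tau=\eta+2\pi i\theta$ small and positive real part, and study $\log Q_f(e^{-\tau},u)=\sum_{k\ge 1}\Delta_f(k)\log(1+ue^{-\tau k})$. Expanding the logarithm and summing the geometric-type series in $k$ reduces everything to sums of the shape $\sum_{k\ge 1}\Delta_f(k)e^{-\tau k m}$, i.e. to the behaviour of $\sum_k \Delta_f(k)k^{-s}$ and hence of $D_r(s)$ near its rightmost singularity. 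The key analytic fact (to be proven in the sections that the excerpt advertises, via the Ramanujan-sum identity and the Alkan--Ledoan--Zaharescu Euler product argument) is that $D_r(s)$ continues past $\mathrm{Re}(s)=1$ with a pole structure dictated by $\zeta(s-r)$ shifted, giving a main term of order $\tau^{-(r+1)}$ in the relevant Mellin transform; this is what produces the exponent $\tfrac{r+1}{r+2}$ throughout. Via the converse mapping / Mellin--Perron step I would then get the asymptotics of $\log Q_f$, of $[z^n]Q_f(z,1)=p_f(n)$ by a one-dimensional saddle point in $\tau$ (fixing $\eta$ by $n=\sum_k k/(e^{\eta k}+1)$, so $\eta\asymp n^{-1/(r+2)}$), and of the joint behaviour needed below.

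Next I would handle the $u$-dependence. Writing $u=e^{it/\nu_{f,n}}$ (the Curtiss-theorem characteristic-function substitution) and applying Lemma~\ref{generatingfunctionlemma}, one has $p_f(n)\,\mathbb{E}(u^{\varpi_{f,n}})=[z^n]Q_f(z,u)$. A two-variable saddle-point analysis — choosing the radial saddle $\eta=\eta(n,u)$ solving the appropriate modified equation and integrating over the circle $|z|=e^{-\eta}$ — yields $\mathbb{E}(e^{it\varpi_{f,n}/\nu_{f,n}})=\exp\!\big(it\mu_{f,n}/\nu_{f,n}-t^2/2+o(1)\big)$ uniformly for $t$ in compact sets, where $\mu_{f,n}=\sum_k \Delta_f(k)(e^{\eta k}+1)^{-1}$ and $\nu^2_{f,n}$ is the second-order coefficient, which after the standard elimination of the cross term (coming from the shift in the saddle $\eta$ as $u$ varies) takes exactly the form in~\eqref{variancesigma}. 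Curtiss' theorem on convergence of moment generating functions then upgrades pointwise convergence of characteristic functions to the asymptotic normality statement, and the orders $O(C_\mu(r)n^{(r+1)/(r+2)})$, $O(C_\sigma(r)n^{(r+1)/(r+2)})$ for $\mu_{f,n},\nu^2_{f,n}$ follow by inserting $\eta\asymp n^{-1/(r+2)}$ into the defining sums and again invoking the analytic continuation of $D_r(s)$ to evaluate the constants $C_\mu(r),C_\sigma(r)$.

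For the tail bounds I would use the standard exponential Markov / Chernoff inequality: for real $\lambda>0$, $\mathbb{P}(\varpi_{f,n}-\mu_{f,n}\ge x\nu_{f,n})\le e^{-\lambda x\nu_{f,n}}\,\mathbb{E}(e^{\lambda(\varpi_{f,n}-\mu_{f,n})})$, where $\mathbb{E}(e^{\lambda\varpi_{f,n}})=[z^n]Q_f(z,e^\lambda)/p_f(n)$ is again accessible by the saddle-point estimates, now valid in a wider range of $\lambda$ of size up to roughly $n^{(r+1)/(6(r+2))}/\log n$. Optimising $\lambda$ — taking $\lambda\asymp x/\nu_{f,n}$ in the Gaussian regime and $\lambda$ at the boundary of validity in the large-deviation regime — reproduces the two cases in the stated tail estimate, with the $1+O((\log n)^{-3})$ error tracking the error term in the saddle-point expansion of $\log Q_f$; the lower tail $\mathbb{P}(\varpi_{f,n}-\mu_{f,n}\le -x\nu_{f,n})$ is symmetric.

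I expect the main obstacle to be the analytic continuation and precise pole/growth analysis of $D_r(s)=\sum_{n\ge1}\sigma_r(n+1)n^{-s}$: unlike $\sum\sigma_r(n)n^{-s}=\zeta(s)\zeta(s-r)$, the shift by one destroys multiplicativity, so one must go through the Ramanujan-sum expansion $\sigma_r(n+1)=\sum_{q}q^{-1}c_q(n+1)\cdot(\text{something})$ and control the resulting double sum, then recover an Euler-product-type factorisation à la Alkan--Ledoan--Zaharescu to locate the singularity at $s=r+1$ and bound $D_r(s)$ on vertical lines well enough for the Mellin inversion and for the uniformity in $u$. Everything else — the geometric expansions, the one- and two-dimensional saddle points, Curtiss' theorem, and the Chernoff optimisation — is routine once that Dirichlet-series input, together with the gap estimate $\Delta_f(k)=\sigma_r(k+1)-\sigma_r(k)$ being typically of size $\asymp k^{r}$ on average but erratic pointwise, is in hand; keeping the error terms uniform through the saddle-point shift in $\eta$ as $u$ ranges over the relevant arc is the second delicate point.
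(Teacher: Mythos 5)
Your proposal follows essentially the same route as the paper: a saddle-point analysis of the bivariate generating function with Mellin transform/converse mapping, the Ramanujan-sum identity together with an Alkan--Ledoan--Zaharescu-style Euler-product decomposition to continue $\sum_{n\ge 1}\sigma_r(n+1)n^{-s}$ (the paper's $D_1$, $D_2$ split), then the moment generating function expansion with Curtiss' theorem and a Chernoff-bound optimisation for the tails, yielding the same exponent $\tfrac{r+1}{r+2}$ and the same mean/variance formulas. The only cosmetic difference is your phrasing in terms of characteristic functions where the paper works with the real moment generating function, which does not change the argument.
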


%Using the relation between the divisor function and the Ramanujan sum we establish the following.
\begin{comment}
\begin{theorem}\label{clt}
Let $\varpi_{f,n}$ be the random variable counting the number of summands in a $f$-restricted partition of $n$ as defined in~\eqref{partitiondef}. Then, as $n\to\infty$, $\varpi_{f,n}$ is asymptotically normally distributed with mean $\mu_{f,n}$ and variance $\nu^2_{f,n}$ i.e.,
\begin{align*}
\mathbb{P}\left(\frac{\varpi_{f,n}-\mu_{f,n}}{\nu_{f,n}}<x\right)=\frac{1}{\sqrt{2\pi}}\int_{-\infty}^{x}e^{-{t^2}/{2}}dt+o(1),    
\end{align*}
uniformly for all $x$ as $n\to\infty$. The mean $\mu_{f,n}$ and the variance $\nu^2_{f,n}$ are given by
\begin{align}\label{mean}
\mu_{f,n}
=\sum_{k=1}^{\infty}\frac{\Delta_f(k)}{e^{\eta k}+1}
=\frac{{2}\sqrt{3}}{\pi\sqrt{\upsilon_2(m)}}n^{\frac{1}{2}} & \text{ if }f=c_m(n) \text{ and } \upsilon_i(m)=\prod_{p|m}\left(1-\frac{1}{p^i}\right) 
\end{align}
and
\begin{align}\label{variance}
\nu^2_{f,n}
=b_{q}n^{\frac{1}{2}} & \text{ if }f=c_m(n) \text{ and } \upsilon_i(m)=\prod_{p|m}\left(1-\frac{1}{p^i}\right),
\end{align}
where the constant 
\begin{align*}
b_m
\end{align*}

Finally, the tail of the distribution is given by
\begin{align*}
\mathbb{P}\left(\frac{\varpi_{f,n}-\mu_{f,n}}{\nu_{f,n}}\geq x\right)\leq     
\end{align*}
and an analogous statement holds for $\mathbb{P}\left(\frac{\varpi_{f,n}-\mu_{f,n}}{\nu_{f,n}}\leq -x\right)$.
\end{theorem}
\end{comment}

\subsection{Initial Setup}\label{initialsetup}

In this section, we provide an outline of the proof of Theorem~\ref{cltforsigma}. We adopt the saddle-point approach from~\cite[\S3]{lipniketal} for the initial construction. However, our analysis necessitates a more intricate argument concerning the distribution of multiplicative functions due to the presence of the Dirichlet series with a shifted coefficient. 
%As noted earlier, the proof of Theorem~\ref{partitiontheorem}, provided in Section~\ref{proofofpartitiontheorem}, is more straightforward, as it involves deriving an asymptotic expression for the single-variable generating series (as given in~\eqref{generatingseries}). In contrast, the analysis of the bi-variate generating series~\eqref{bivariategeneratingseries} is more complex. 

\medskip

We begin by observing that, to establish the central limit theorem for the random variable $\varpi_{f,n}$, it is necessary to show that the normalised moment generating function $\mathbb{E}\left(e^{(\varpi_{f,n}-\mu_{f,n})t/\nu_{f,n}}\right)$ converges to $e^{t^2/2}$ as $t \to \infty$. Finally, we estimate the tail of the distribution using the Chernoff bound.

\medskip

Since $\mu_{f,n}/\nu_{f,n}$ is a constant, we can write
\begin{align*}
\mathbb{E}\left(e^{(\varpi_{f,n}-\mu_{f,n})t/\nu_{f,n}}\right)  
=e^{-\mu_{f,n}/\nu_{f,n}}\mathbb{E}\left(e^{\varpi_{f,n}t/\nu_{f,n}}\right).
\end{align*}

\noindent
To achieve our goal we need to obtain the coefficient of $z^n$ in the bivariate generating series $Q(z,u)$. By Lemma~\ref{generatingfunctionlemma}, the probability generating function is expressed as
\begin{align*}
\mathbb{E}(u^{\varpi_{f,n}})
=\frac{[z^n]Q_f(z,u)}{[z^n]Q_f(z,1)}.
\end{align*}
\noindent
Applying Cauchy integral formula we have
\begin{align*}
\mathcal{Q}_{f,n}(u)
\coloneqq [z^n]Q_f(z,u)
=\frac{1}{2\pi i}\oint_{|z|=e^{-\tau}}\frac{Q_f(z,u)}{z^{n+1}}dz.
\end{align*}
\noindent
A standard transformation gives, for $\tau>0$,
\begin{align}\label{mainintegral}
\mathcal{Q}_{f,n}(u)
=\frac{e^{n\tau}}{2\pi}\int_{-\pi}^{\pi}\exp\left(in\theta+F(\tau+i\theta,u)\right)d\theta,
\end{align}
where
\begin{align}\label{logbivariategeneratingseries}
F(\gamma,u)
\coloneqq \log Q_f(e^{-\gamma},u)
=\log\left(\prod_{k=1}^{\infty}(1+ue^{-\gamma k})^{\Delta_f(k)}\right).
\end{align}

\noindent
To tackle the integral in~\eqref{mainintegral}, we utilise the saddle-point technique, which is often referred to as the method of steepest descent. It can be viewed as a coarse version of the circle method. Accordingly, the primary application of this method is to derive estimates for integrals of the form
\begin{align*}
\int_{-\pi}^{\pi} e^{\Phi(\tau+i\theta)}d\theta,   
\end{align*}
for a suitable function $\Phi$. We will now split the integral into two parts. Choose $\theta_n>0$, then we write
\begin{align}\label{integralsplit}
\int_{-\pi}^{\pi} e^{\Phi(\tau+i\theta)}d\theta
=\int_{|\theta|\leq \theta_n} e^{\Phi(\tau+i\theta)}d\theta
+\int_{\theta_n<|\theta|\leq \pi}e^{\Phi(\tau+i\theta)}d\theta.
\end{align}

\noindent
The first part will compute the contribution near the positive real axis, or, equivalently, around $\theta=0$. This will give us the main term, whereas the contribution from the real line i.e, the estimate arising from $e^{\Phi(\tau+i\theta)-\mathrm{Re}(\Phi(\tau))}$ will give us the error term. Using the Taylor expansion up to the third order, the function $\Phi$ can be expressed as

\begin{align}\label{taylorapproximation}
\Phi(\tau+i\theta)
=\Phi(\tau)+i\theta\Phi'(\tau)-\frac{\theta^2}{2}\Phi''(\tau)+O\left(\sup_{\theta}\left|\theta^3\Phi'''(\tau+i\theta)\right|\right).
\end{align}

\noindent
We now choose $\tau$ such that the first-order derivative $\Phi'(\tau)$ vanishes. Thus, the remaining term of the integral in~\eqref{integralsplit} becomes
\begin{align*}
\int_{|\theta|<\theta_n} e^{\Phi(\tau+i\theta)}d\theta
=e^{\Phi(\tau)}\int_{|\theta|<\theta_n} e^{-\tfrac{\theta^2}{2}\Phi''(\tau)}\left(1+O\left(\sup_{\theta}\left|\theta^3\Phi'''(\tau+i\theta)\right|\right)\right) d\theta.
\end{align*}

\noindent
We need to analyse the integrand with the terms involving $\Phi''(\tau)$ and $\Phi'''(\tau)$ in order to show that the aforementioned transformation and estimates are valid. For a detailed explanation, we refer to~\cite[\S3]{lipniketal}. Recall the Mellin transform of a function $h(t)$, defined by
\begin{align*}
h^{*}(s)
\coloneqq\mathcal{M}[h;s]
=\int_{0}^{\infty}h(t)t^{s-1}dt,
\end{align*}
which serves as one of the key ingredients in our analysis. Of particular importance is the rescaling rule (see, for instance,~\cite[Theorem 1]{flajoletetal}), given by
\begin{align*}
\mathcal{M}\left[\sum_{k}\lambda_kh(\mu_kx);s\right]
=h^{*}(s)\sum_{k}\frac{\lambda_k}{\mu_k^s}.
\end{align*}

\noindent
Let $\delta>0$ and set $\delta\leq u\leq \delta^{-1}$. By ``uniformly in $u$", we mean ``uniformly as $\delta\leq u\leq \delta^{-1}$" throughout this paper. Thus, the Mellin transform for the polynomial with respect to $\gamma$ yields
\begin{align*}
\mathcal{M}\left[\sum_{k}\Delta_f(k)\log(1+ue^{-\gamma k});s\right]   
=\mathcal{D}_f(\Delta;s)\mathcal{I}(s,u),
\end{align*}
where
\begin{align}\label{dirichletserieswithG}
\mathcal{D}_f(\Delta;s)
\coloneqq \sum_{n=1}^{\infty}\frac{\Delta_f(n)}{n^s}, 
\end{align}
and
\begin{align*}
\mathcal{I}(s)
=\int_{0}^{\infty} (1+ue^{-\gamma})\gamma^{s-1}d\gamma,      
\end{align*}
represents the Mellin transform of $\gamma\to(1+ue^{-\gamma})$.

\medskip

The principal advantage of the Mellin transform lies not in the transform itself, but in the inverse mapping, which allows us to examine the singularities of the transformed function to obtain the asymptotic expansion\footnotetext{We adopt the term `converse mapping' in Theorem~\ref{cmtheorem} from~\cite{lipniketal}, where it is used in the context of the inverse Mellin transform.}. This step is essential in our case, as we will be working with a Dirichlet series with a shifted multiplicative coefficient.

\begin{theorem}[Converse Mapping~\cite{flajoletetal}]\label{cmtheorem}
Let $f(x)$ be a continuous function on the interval $(0,+\infty)$, and suppose that its Mellin transform $f^{*}(s)$ has a non-empty fundamental strip $\langle\alpha,\beta\rangle$. Assume that $f^{*}(s)$ admits a meromorphic continuation to the larger strip $\langle\gamma,\beta\rangle$, for some $\gamma<\alpha$, with only finitely many poles in that region, and is analytic on the line $\mathrm{Re}(s)=\gamma$. Suppose further that there exists a real number $\eta\in(\alpha,\beta)$ such that
\begin{align}\label{cmcondition1}
f^{*}(s)=O(|s|^{-r})
\end{align}
as $|s|\to\infty$ in $\gamma\leq\mathrm{Re}(s)\leq \eta$, for some $r>1$. If, in addition, $f^*(s)$ admits a singular expansion of the form
\begin{align}\label{cmcondition2}
f^{*}(s)\asymp\sum_{(\xi,k)\in A}\frac{d_{\xi,k}}{(s-\xi)^k} \end{align}
for $s\in\langle\gamma,\alpha\rangle$, then the function admits the following asymptotic expansion as $x\to 0^{+}$:
\begin{align*}
f(x)
=\sum_{(\xi,k)\in A} d_{\xi,k}\left(\frac{(-1)^{k-1}}{(k-1)!}x^{-\xi}(\log x)^{k-1}\right)+O(x^{-\gamma}).
\end{align*}
\end{theorem}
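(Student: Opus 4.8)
\textbf{Proof plan for the Converse Mapping Theorem (Theorem~\ref{cmtheorem}).}

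The plan is to start from the Mellin inversion formula. Since $f^{*}(s)$ is the Mellin transform of $f$ with fundamental strip $\langle\alpha,\beta\rangle$, for any $c\in(\alpha,\beta)$ we have the representation
\begin{align*}
f(x)=\frac{1}{2\pi i}\int_{c-i\infty}^{c+i\infty} f^{*}(s)x^{-s}\,ds,
\end{align*}
valid for $x>0$ (convergence of the integral being guaranteed by the decay hypothesis~\eqref{cmcondition1}, which gives $f^{*}(s)=O(|s|^{-r})$ with $r>1$ on the relevant vertical lines). The idea is then to push the contour of integration to the left, from the line $\mathrm{Re}(s)=c$ to the line $\mathrm{Re}(s)=\gamma$, picking up residues at the finitely many poles of $f^{*}$ in the strip $\gamma<\mathrm{Re}(s)<c$. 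I would choose $c=\eta$ in the statement, so that the decay estimate~\eqref{cmcondition1} is available uniformly on the whole closed strip $\gamma\le\mathrm{Re}(s)\le\eta$ that the contour sweeps across.

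The next step is to justify the contour shift rigorously. I would integrate $f^{*}(s)x^{-s}$ around the boundary of the rectangle with vertices $\eta\pm iT$ and $\gamma\pm iT$, apply the residue theorem, and then let $T\to\infty$. The two horizontal segments have length $\eta-\gamma$ (bounded), and on them $|f^{*}(s)x^{-s}|\ll T^{-r}\max(x^{-\gamma},x^{-\eta})$ by~\eqref{cmcondition1} together with the hypothesis that $f^{*}$ is analytic on $\mathrm{Re}(s)=\gamma$ (and has only finitely many poles in between, so for $T$ large the horizontal edges avoid all poles); since $r>1$ these contributions vanish as $T\to\infty$. This yields
\begin{align*}
f(x)=\sum_{\text{poles }\xi}\mathrm{Res}_{s=\xi}\bigl(f^{*}(s)x^{-s}\bigr)+\frac{1}{2\pi i}\int_{\gamma-i\infty}^{\gamma+i\infty}f^{*}(s)x^{-s}\,ds.
\end{align*}
The remaining integral over $\mathrm{Re}(s)=\gamma$ is bounded in absolute value by $x^{-\gamma}\cdot\frac{1}{2\pi}\int_{-\infty}^{\infty}|f^{*}(\gamma+it)|\,dt$, and this last integral converges because of~\eqref{cmcondition1} with $r>1$; hence that term is $O(x^{-\gamma})$, matching the claimed error term.

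Finally, I would compute the residues from the prescribed singular expansion~\eqref{cmcondition2}. For a pole at $s=\xi$ contributing a term $\frac{d_{\xi,k}}{(s-\xi)^k}$, the corresponding residue of $\frac{d_{\xi,k}}{(s-\xi)^k}x^{-s}$ is obtained by expanding $x^{-s}=x^{-\xi}e^{-(s-\xi)\log x}=x^{-\xi}\sum_{j\ge 0}\frac{(-\log x)^j}{j!}(s-\xi)^j$ and reading off the coefficient of $(s-\xi)^{-1}$, namely $d_{\xi,k}\,x^{-\xi}\frac{(-\log x)^{k-1}}{(k-1)!}=d_{\xi,k}\frac{(-1)^{k-1}}{(k-1)!}x^{-\xi}(\log x)^{k-1}$. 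Summing over all $(\xi,k)\in A$ reproduces exactly the stated main term, and combined with the $O(x^{-\gamma})$ bound from the shifted contour this completes the proof. The main obstacle here is purely technical bookkeeping: making sure the decay hypothesis is invoked correctly on both the horizontal truncating segments (to kill them as $T\to\infty$) and the final vertical line (to get the $O(x^{-\gamma})$ term), and that the finitely-many-poles assumption lets one take $T$ large enough to enclose all of them; there is no deep difficulty, the content of the theorem being essentially a careful packaging of Mellin inversion plus the residue theorem.
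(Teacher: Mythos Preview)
Your argument is correct and is precisely the standard proof: Mellin inversion on a line $\mathrm{Re}(s)=\eta$ inside the fundamental strip, shift the contour left to $\mathrm{Re}(s)=\gamma$ using the decay bound~\eqref{cmcondition1} to kill the horizontal segments, collect residues via the singular expansion~\eqref{cmcondition2}, and bound the remaining vertical integral by $O(x^{-\gamma})$. Note, however, that the paper does not supply its own proof of this theorem at all---it is quoted from Flajolet, Gourdon and Dumas~\cite{flajoletetal} and used as a black box---so there is no in-paper proof to compare against; your write-up is essentially the argument one finds in that reference.
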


\noindent
Therefore, in our case, we must consider the singularities of the Dirichlet series $\mathcal{D}_f(\Delta;s)$ and $\mathcal{I}(s,u)$. Thus, we are left with the expression

\begin{align*}
\mathcal{I}(s,u)
=\int_{0}^{\infty} (1+ue^{-\gamma})\gamma^{s-1}d\gamma
=\int_{0}^{\infty}\sum_{j=1}^{\infty} \frac{(-u)^{j}}{j}e^{-j\gamma}\gamma^{s-1}d\gamma
=\mathrm{Li}_{s+1}(-u)\Gamma(s),
\end{align*}
where $\mathrm{Li}_{s}(u)$ denotes the polylogarithm, and $\Gamma(s)$ represents the Gamma function. Now, we verify that conditions~\eqref{cmcondition1} and~\eqref{cmcondition2} hold for $\mathrm{Li}_{s}(u)$ and $\Gamma(s)$. An application of Stirling formula yields (see for instance~\cite[\S5 (A.4)]{iwanieckowlaski})
\begin{align*}
|\Gamma(\sigma+it)|=\sqrt{2\pi}|t|^{\sigma-\tfrac{1}{2}}e^{-\pi|t|/2}(1+O_{a,b}(1/t)),   
\end{align*}
where $a\leq\sigma\leq b$ and $|t|\geq 1$. For the polylogarithm function, we adopt the approach outlined in~\cite{lipniketal}. Let $w=-\log z$, and define
\begin{align*}
\Lambda(w)\coloneqq\mathrm{Li}_{r}(e^{-w})
=\sum_{n=1}^{\infty}\frac{e^{-nw}}{n^{r}}.
\end{align*}

\noindent
As this expression represents a harmonic sum, we apply Mellin transform and obtain
\begin{align*}
\Lambda^{*}(s) 
=\int_{0}^{\infty}\sum_{n=1}^{\infty}\frac{e^{-nw}}{n^r}w^{s-1}dw
=\sum_{n=1}^{\infty}\frac{1}{n^{s+r}}\int_{0}^{\infty}e^{-v}v^{s-1}dv
=\zeta(s+r)\Gamma(s),
\end{align*}
for $\mathrm{Re}(s)>\max(0,1-r)$. The Gamma function has simple poles at the negative integers, while $\zeta(s+r)$ has a simple pole at $s=1-r$. Therefore, by applying Theorem~\ref{cmtheorem}, we obtain
\begin{align*}
\mathrm{Li}_{r}(z)
=\Gamma(1-r)w^{r-1}+\sum_{j=0}^{\infty}\frac{(-1)^{j}}{j!}\zeta(r-j)w^{j},
\end{align*}
where 
\begin{align*}
w=\sum_{\ell=1}^{\infty}\frac{(1-z)^{\ell}}{\ell}.    
\end{align*}
\noindent
By applying these estimates in the converse mapping, we derive an asymptotic expression for $\mathcal{Q}_{f,n}$, given by
\begin{align}\label{saddlepointtheorem}
\mathcal{Q}_{f,n}(u)
=\frac{e^{n\tau+F(\tau,u)}}{\sqrt{2\pi B}}(1+O(\tau^{2r/7-1})).
\end{align}
\noindent
Recall the moment-generating function
\begin{align*}
M_n(\theta)=\mathbb{E}(e^{(\varpi_{f,n}-\mu_{f,n})\theta/\nu_{f,n}})
=\exp\left(-\frac{\mu_{f,n}\theta}{\nu_{f,n}}\right)\mathbb{E}(e^{\theta\varpi_{f,n}/\nu_{f,n}})
=\exp\left(-\frac{\mu_{f,n}\theta}{\nu_{f,n}}\right)\frac{\mathcal{Q}_{n}(e^{1/\nu_{f,n}})}{\mathcal{Q}_{n}(1)}.
\end{align*}

\noindent
Next, by applying the Taylor expansion of $M_n(\theta)$, we obtain
\begin{align*}
M_n(\theta)
=\exp\left(\frac{\theta^2}{2}+O(n^{-2r/7(r+2)+1/(r+2)})\right),
\end{align*}
thereby establishing the central limit theorem for $\varpi_{f,n}$.

\section{The Dirichlet series}

In this section, we examine the asymptotic behaviour of the Dirichlet series twisted by the shifted divisor coefficient, which plays a crucial role in deriving the main term. By substituting the expression for $\Delta_f(n)$ into the Dirichlet series given in~\eqref{dirichletserieswithG}, we arrive at
\begin{align}\label{shiteddirichletseries}
\mathcal{D}_f(\Delta;s)
\coloneqq\sum_{n=1}^{\infty}\frac{f(n+1)-f(n)}{n^s}
%=&\sum_{n=1}^{\infty}\frac{f(n+1)}{n^{s}}
%-\sum_{n=1}^{\infty}\frac{f(n)}{n^{s}}\nonumber\\
\eqqcolon A_f(s)-D_f(s),
\end{align}
where 
\begin{align}\label{shifteddirichletseries}
A_f(s)
=\sum_{n=1}^{\infty}\frac{f(n+1)}{n^s}
\end{align}
is the Dirichlet series with a shifted coefficient and 
\begin{align}\label{dirichletseriesdefinition}
D_{f}(s)
=\sum_{n=1}^{\infty}\frac{f(n)}{n^s},
\end{align}
for $\mathrm{Re}(s)>\alpha$ and $\alpha\in\mathbb{R}_{>0}$.

\medskip

Note that for a class of multiplicative functions $f:\mathbb{N}\to\mathbb{N}$ with period 1, i.e. $f(n+1)=f(n)$ for all values of $n$, the analytic properties of $A_f(s)$ are closely analogous to those of $D_f(s)$. One notable example in this context is the Dirichlet character $\chi(n)$ with period 1.  However, determining the analytic behaviour of $A_f(s)$ (such as its poles and residues) directly from the Dirichlet series $D_f(s)$ for general multiplicative functions $f$ remains a difficult problem.
%and a result by Heath-Brown, who proved that
%\begin{align*}
%\sigma_0(n)=\sigma_0(n+1),    
%\end{align*}
%holds for infinitely many values of $n$, where $\sigma_r(n)=\sum_{d|n}d^{r}$ denotes the $r$-th divisor function.
\medskip

The study of the distribution and growth of the gap function $(|f(n+1)-f(n)|)_{n}$ presents an intriguing area of research, leading to significant insights into the distribution of multiplicative functions at primes. In this context, notable contributions have been made by O. Klurman~\cite{klurman} and A. Mangerel~\cite{sacha}. Using the properties of correlations of ``pretentious" functions, Klurman~\cite{klurman} counts the number of ways to write $n=a+b$, where $a,b$ belongs to some multiplicative subsets of $\mathbb{N}$. Recently, Mangerel~\cite{sacha} has explored this type of result in further detail. However, the main result of this paper takes a different approach, connecting instead with counting problems of the type described in~\eqref{gnsetdefinition}.

\medskip

In particular, the Ramanujan sum $c_m(n)$ possesses arithmetic identities with a broad class of multiplicative functions. Furthermore, it can be interpreted as a character, providing significant leverage to deduce the properties of $A_f(s)$ from $D_f(s)$ for those multiplicative functions $f:\mathbb{N}\to\mathbb{N}$ that exhibit an identity with the Ramanujan sum.

For the case of generalised divisor function $f(n)=\sigma_r(n)$, we use the identity (see for instance~\cite{kratzel}) for fixed $r\geq 1$,
\begin{align}\label{kthdivisoridentity}
\sigma_r(n)
=\zeta(r+1)n^r\sum_{m=1}^{\infty}\frac{c_m(n)}{m^{r+1}},
\end{align} 
with the Ramanujan sum given in~\eqref{ramanujansum}.

Note that the case $0\leq r<1$ poses different challenges within the framework of our analysis. Observe that when $f(n)=n^{\beta}$ for $\beta\in\mathbb{R}$, it simplifies to the result in~\cite[Theorem 2]{lipniketal}.

\medskip

The function $f$, considered in this paper satisfies an identity involving the Ramanujan sum $c_m(n)$.  These relations are crucial for handling the Dirichlet series $A_f(s)$ with shifted a coefficient. Moreover, our proof can be extended to encompass any multiplicative functions that exhibit similar identities with the Ramanujan sum. Notably, this method also applies to the von Mangoldt function $\Lambda(n)$. Although $\Lambda(n)$ is not a multiplicative function, it satisfies a similar identity involving the Ramanujan sum. Consequently, the arguments presented in this article are sufficient to establish a central limit theorem, analogous to Theorem~\ref{cltforsigma}, for prime-detecting function that has an identity with $c_m(n)$.

\begin{remark}
In this context, it is important to observe that the Ramanujan sum $c_m(n)$ is a sign-changing function, which implies that integer partitions involving the Ramanujan sum cannot be expressed in terms of~\eqref{partitiondef}. Nevertheless, an analogue of Theorem~\ref{cltforsigma} can be established for the Ramanujan sum, subject with certain restrictions on $m$.

%Nevertheless, the asymptotic behavior and central limit theorem for $c_m(n)$ can be interpreted as a generalization of some previous studies, such as those in~\cite{taylor2}. Consequently, all these studies can be viewed as special cases of our Theorem~\ref{partitiontheorem} and Theorem~\ref{clt} when $f$ is taken to be $c_m(n)$.      
\end{remark}

For Dirichlet character $\chi$ modulo $m$, the generalised Gauss sum is defined as
\begin{align}\label{generalizedgausssum}
c'_{\chi}(n) 
\coloneqq \sum_{\substack{b(\bmod{m})\\(b,m)=1}}\chi(b)e\left(\frac{bn}{m}\right).
\end{align}
Before analysing the Dirichlet series $A_{f}(s)$, we establish some prerequisites.

\begin{lemma}\label{shiftedramanujansumformula}
Let $c_m(n)$ denote the Ramanujan sum for $m\geq 1$ and $n\geq 1$. Then the following identity holds
\begin{align*}
c_m(n+1)
=\frac{\mu(m)}{\varphi(m)}c_m(n)+\frac{1}{\varphi(m)}\sum_{\substack{\chi(\bmod{m})\\ \chi\ne\chi_0}}\tau(\chi)c'_{\bar{\chi}}(n),
\end{align*}
where $\tau(\chi)=c'_{\chi}(1)$ has defined in~\eqref{generalizedgausssum}. %Here, $\mu(m)$ denotes M\"{o}bius function, and $\varphi(m)$ is Euler’s totient function.
\end{lemma}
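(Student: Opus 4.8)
The plan is to expand the Ramanujan sum $c_m(n+1)$ using its definition and separate the roles of the additive character and the multiplicative structure. First I would write
\begin{align*}
c_m(n+1)
=\sum_{\substack{b(\bmod m)\\(b,m)=1}} e\!\left(\frac{b(n+1)}{m}\right)
=\sum_{\substack{b(\bmod m)\\(b,m)=1}} e\!\left(\frac{b}{m}\right) e\!\left(\frac{bn}{m}\right).
\end{align*}
The idea is to view the function $b\mapsto e(b/m)$ on the group $(\mathbb{Z}/m\mathbb{Z})^\times$ and expand it in the basis of Dirichlet characters modulo $m$, i.e. write $e(b/m)=\sum_{\chi(\bmod m)} a_\chi \chi(b)$ for $b$ coprime to $m$, where orthogonality of characters gives the Fourier coefficient $a_\chi = \frac{1}{\varphi(m)}\sum_{c(\bmod m),\,(c,m)=1}\overline{\chi}(c)\,e(c/m) = \frac{1}{\varphi(m)} c'_{\overline\chi}(1) = \frac{1}{\varphi(m)}\tau(\overline\chi)$, recalling $\tau(\chi)=c_\chi(1)$ and that for $(c,m)=1$ the restricted and unrestricted Gauss sums agree when $\chi$ is restricted to coprime residues.

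Substituting this expansion back in and interchanging the finite sums yields
\begin{align*}
c_m(n+1)
=\frac{1}{\varphi(m)}\sum_{\chi(\bmod m)} \tau(\overline\chi)
\sum_{\substack{b(\bmod m)\\(b,m)=1}} \chi(b)\, e\!\left(\frac{bn}{m}\right)
=\frac{1}{\varphi(m)}\sum_{\chi(\bmod m)} \tau(\overline\chi)\, c'_{\chi}(n).
\end{align*}
Next I would isolate the principal character $\chi_0$ from the sum. For $\chi=\chi_0$ one has $\tau(\overline{\chi_0})=\tau(\chi_0)=c_{\chi_0}(1)=\sum_{(b,m)=1}e(b/m)=c_m(1)=\mu(m)$, and $c'_{\chi_0}(n)=\sum_{(b,m)=1}e(bn/m)=c_m(n)$; this produces the term $\frac{\mu(m)}{\varphi(m)}c_m(n)$. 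The remaining sum runs over $\chi\ne\chi_0$, giving $\frac{1}{\varphi(m)}\sum_{\chi\ne\chi_0}\tau(\overline\chi)\,c'_{\chi}(n)$, which matches the claimed formula after re-indexing $\chi\mapsto\overline\chi$ (a bijection on nonprincipal characters, under which $\tau(\overline{\overline\chi})=\tau(\chi)$ and $c'_{\chi}\mapsto c'_{\overline\chi}$).

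The only genuinely delicate point is the justification that $e(b/m)$ lies in the span of the characters on $(\mathbb{Z}/m\mathbb{Z})^\times$ — which is immediate since the characters form an orthonormal basis of the space of all functions on that finite group — together with bookkeeping the restricted versus unrestricted Gauss sums $c_\chi$ and $c'_\chi$ consistently; since we only ever evaluate at arguments $b$ with $(b,m)=1$, the restricted sum $c'_{\overline\chi}(1)$ is the correct coefficient, and no cross terms with non-coprime residues appear. I expect the re-indexing step ($\chi\mapsto\overline\chi$ to match the stated form and the identification of the $\chi_0$-contribution) to be the part most prone to sign/conjugation slips, but no real obstacle arises; everything reduces to orthogonality of Dirichlet characters and the definitions in~\eqref{ramanujansum} and~\eqref{generalizedgausssum}.
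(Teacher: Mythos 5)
Your proposal is correct and follows essentially the same route as the paper: expanding $e(b/m)$ on $(\mathbb{Z}/m\mathbb{Z})^\times$ in Dirichlet characters (the paper quotes this Fourier expansion from Iwaniec--Kowalski, you derive the coefficient $\tau(\overline\chi)/\varphi(m)$ by orthogonality, which is the same thing), substituting into $c_m(n+1)$, interchanging the finite sums, and isolating the principal character via $\tau(\chi_0)=c_m(1)=\mu(m)$. The only difference is your final re-indexing $\chi\mapsto\overline\chi$, which the paper avoids by writing the expansion with $\overline\chi(b)\tau(\chi)$ from the start; this is purely a bookkeeping choice and both arguments are identical in substance.
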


\begin{proof} 
Considering the Fourier expansion of the additive character (see, for instance,~\cite[\S3.4]{iwanieckowlaski}), we write
\begin{align}\label{fourierexpansion} 
e\left(\frac{b}{m}\right)
=\frac{1}{\varphi(m)}\sum_{\substack{\chi(\bmod{m})}}\Bar{\chi}(b)\tau(\chi), \quad \text{if $(b, m) = 1$}, 
\end{align}
where $\tau(\chi)=\sum_{b(\bmod{m})}\chi(b)e(b/m)$ denotes the Gauss sum associated with the Dirichlet character $\chi$. We simplify the shifted Ramanujan sum as follows

\begin{align}\label{generalizedramanujansum}
c_m(n+1)
=\sum_{\substack{b\bmod{m}\\(b,m)=1}}e\left(\frac{b(n+1)}{m}\right)
=&\sum_{\substack{b\bmod{m}\\(b,m)=1}}e\left(\frac{bn}{m}\right)e\left(\frac{b}{m}\right)\nonumber\\
=&\sum_{\substack{b\bmod{m}\\(b,m)=1}}e\left(\frac{bn}{m}\right)\left(\frac{1}{\varphi(m)}\sum_{\chi(\bmod{m})}\Bar{\chi}(b)\tau(\chi)\right)\nonumber\\
=&\frac{1}{\varphi(m)}\sum_{\chi(\bmod{m})}\tau(\chi)\sum_{\substack{b\bmod{m}\\(b,m)=1}}\Bar{\chi}(b) e\left(\frac{bn}{m}\right),
%=&\frac{1}{\varphi(m)}\sum_{\chi(\bmod{m})}\tau(\chi)\tau(\Bar{\chi})\chi(n),\quad\text{if $(m,n)=1$}\nonumber\\
%=&\frac{m}{\varphi(m)}\sum_{\substack{\chi(\bmod{m})}}{\chi}(-1)\chi(n),\quad\text{if $(m,n)=1$},
%=&m\sum_{d|(n-1,m)}\varphi(d)\mu\left(\frac{m}{d}\right)
\end{align}
where we substituted~\eqref{fourierexpansion} into the second line. We shall isolate the terms associated with principal characters as these are considerably simpler. Consequently, from the preceding expression, we derive
\begin{align*}
c_m(n+1)
=&\frac{1}{\varphi(m)}\left(\sum_{\substack{\chi(\bmod{m})\\ \chi=\chi_0}}\tau(\chi)\sum_{\substack{b\bmod{m}\\(b,m)=1}}\Bar{\chi}(b) e\left(\frac{bn}{m}\right)+\sum_{\substack{\chi(\bmod{m})\\ \chi\ne\chi_0}}\tau(\chi)\sum_{\substack{b\bmod{m}\\(b,m)=1}}\Bar{\chi}(b) e\left(\frac{bn}{m}\right)\right)\nonumber\\
%=&\frac{1}{\varphi(m)}\left(\sum_{\substack{\chi=\chi_0\\\chi(\bmod{m})}}\tau(\chi)\sum_{\substack{b\bmod{m}\\(b,m)=1}}e\left(\frac{bn}{m}\right)+O\left(\sum_{\substack{\chi\ne\chi_0\\\chi(\bmod{m})}}\tau(\chi)c_{\bar{\chi}}(n)\right)\right)\\
=&\frac{\mu(m)}{\varphi(m)}c_m(n)+\frac{1}{\varphi(m)}\sum_{\substack{\chi(\bmod{m})\\ \chi\ne\chi_0}}\tau(\chi)c'_{\bar{\chi}}(n),
\end{align*}
thereby concluding the lemma.
\end{proof}

In the previous lemma, we transformed the shifted Ramanujan sum into a multiplicative structure by using the character sum. This approach facilitates the analysis of the Dirichlet series that arises in the preceding argument.

\begin{lemma}\label{dsklemma}
For a fixed integer $r\geq 1$, consider the double Dirichlet series
\begin{align}\label{dskdefinition}
D_1(s,r)
\coloneqq &\sum_{m=1}^{\infty}\frac{\mu(m)}{\varphi(m)m^{r+1}}\sum_{n=1}^{\infty}\frac{c_m(n)}{n^s},
\end{align}
where $c_m(n)$ denotes the Ramanujan sum. The following identity then holds for $\mathrm{Re}(s)>1$:
\begin{align*}
D_1(s,r)
=\frac{\zeta(s)}{\zeta(s+r+1)}K_r(s),
\end{align*}
where
\begin{align}\label{arsdefinition}
K_r(s)
=C(r)\prod_{p}\left(1+A_{p,r}(s)\right)
\end{align}
with 
\begin{align*}%\label{arpsdefinition}
A_{p,r}(s)
=-\frac{\frac{1}{p^{r+1}(p-1)}}{1+\frac{1}{p^{r+1}(p-1)}}\left(\frac{1}{p^s}-\frac{1}{p^{s+r+1}}+\frac{1}{p^{2s+r+1}}-\frac{1}{p^{2s+2r+2}}+\cdots-\frac{1}{p^{(k+1)(s+r+1)}}+\cdots\right)
\end{align*}
and
\begin{align}\label{crdefinition}
C(r)
\coloneqq\prod_{p}\left(1+\frac{1}{p^{r+1}(p-1)}\right).    
\end{align}
\end{lemma}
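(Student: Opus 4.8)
\textbf{Proof proposal for Lemma~\ref{dsklemma}.}

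The plan is to evaluate the inner sum first and then multiply the resulting Euler-type products. The starting point is the classical Ramanujan-sum Dirichlet series identity
\[
\sum_{n=1}^{\infty}\frac{c_m(n)}{n^s}=\frac{\sigma_{1-s}(m)}{\zeta(s)},
\]
valid for $\mathrm{Re}(s)>1$, which follows from the multiplicativity of $m\mapsto c_m(n)$ together with the Möbius-type evaluation $c_m(n)=\sum_{d\mid(m,n)}d\,\mu(m/d)$ and Dirichlet-series inversion. Substituting this into \eqref{dskdefinition} gives
\[
D_1(s,r)=\frac{1}{\zeta(s)}\sum_{m=1}^{\infty}\frac{\mu(m)\,\sigma_{1-s}(m)}{\varphi(m)\,m^{r+1}},
\]
and the remaining task is purely multiplicative: the summand is a multiplicative function of $m$, so the sum factors as an Euler product $\prod_p\bigl(1+\text{(}p\text{-term)}\bigr)$, where only the $m=1$ and $m=p$ contributions survive because $\mu(m)=0$ on non-squarefree $m$. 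So the first substantial step is to compute, for each prime $p$,
\[
1+\frac{\mu(p)\sigma_{1-s}(p)}{\varphi(p)p^{r+1}}
=1-\frac{1+p^{1-s}}{(p-1)p^{r+1}}.
\]

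Next I would reconcile this clean product with the stated form involving $\zeta(s)/\zeta(s+r+1)$ and the correction factor $K_r(s)$. The idea is to factor out of each local factor the ``expected'' piece that assembles into $1/\zeta(s+r+1)$ and the constant $C(r)=\prod_p\bigl(1+\tfrac{1}{p^{r+1}(p-1)}\bigr)$: write
\[
1-\frac{1+p^{1-s}}{(p-1)p^{r+1}}
=\Bigl(1+\tfrac{1}{p^{r+1}(p-1)}\Bigr)\Bigl(1-\tfrac{1}{p^{s+r+1}}\Bigr)\bigl(1-A_{p,r}(s)\bigr),
\]
which, after clearing denominators, is an algebraic identity \emph{defining} $A_{p,r}(s)$; one then checks that the $A_{p,r}(s)$ so obtained matches the geometric-series expression in the statement. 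Expanding $\bigl(1+\tfrac{1}{p^{r+1}(p-1)}\bigr)^{-1}\bigl(1-\tfrac{1}{p^{s+r+1}}\bigr)^{-1}$ against the left side and collecting powers of $p^{-s}$ should reproduce the alternating series
$\tfrac{1}{p^s}-\tfrac{1}{p^{s+r+1}}+\tfrac{1}{p^{2s+r+1}}-\cdots$ up to the prefactor $\tfrac{1/(p^{r+1}(p-1))}{1+1/(p^{r+1}(p-1))}$. Taking the product over $p$ then yields $D_1(s,r)=\tfrac{1}{\zeta(s)}\cdot C(r)\cdot\tfrac{1}{\zeta(s+r+1)^{-1}}\cdot\prod_p(1-A_{p,r}(s))$; being careful with which zeta goes in the numerator, $\prod_p(1-p^{-s-r-1})^{-1}=\zeta(s+r+1)$ sits in the numerator while $1/\zeta(s)$ is already present, giving the claimed $\zeta(s)^{-1}\zeta(s+r+1)\,C(r)\prod_p(1-A_{p,r}(s))$; I would then re-examine signs/placement against \eqref{arsdefinition} to land exactly on $\zeta(s)/\zeta(s+r+1)\cdot K_r(s)$, adjusting the factoring convention (e.g. pulling out $(1-p^{-s-r-1})$ rather than its inverse) as needed so that the bookkeeping is consistent.

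Finally I would address convergence and the stated range $\mathrm{Re}(s)>1$ or $\mathrm{Re}(s)+r>0$. Here the point is that $A_{p,r}(s)=O_r(p^{-\min(\mathrm{Re}(s)+r+1,\ 2\mathrm{Re}(s)+r+1)})$, and more crucially the leading term of $A_{p,r}(s)$ is $\asymp p^{-s-r-1}$ (the factor $\tfrac{1}{p^{r+1}(p-1)}$ times $p^{-s}$), so $\sum_p|A_{p,r}(s)|$ converges absolutely as soon as $\mathrm{Re}(s)+r+1>1$, i.e. $\mathrm{Re}(s)+r>0$; hence $K_r(s)$ defines an analytic function (and $\prod_p(1-A_{p,r}(s))$ is a nonvanishing holomorphic function) in that half-plane, which is where the factored identity continues to hold by analytic continuation from $\mathrm{Re}(s)>1$. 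I expect the main obstacle to be \emph{purely the algebraic bookkeeping} of the local factorization: getting the geometric series in $A_{p,r}(s)$ to come out in exactly the alternating pattern stated, with the exact prefactor, and keeping straight which copies of $\zeta$ land in the numerator versus denominator. Everything else — the Ramanujan-sum Dirichlet series, multiplicativity, and the convergence estimate — is standard. A useful sanity check along the way is the special case $r=0$-type behaviour or comparison with $f(n)=n^\beta$, which by the remark after \eqref{kthdivisoridentity} must reduce to \cite[Theorem 2]{lipniketal}.
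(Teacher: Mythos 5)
There is a genuine gap at the very first step: the identity you start from is false. The formula $\sum_{n\geq 1}c_m(n)n^{-s}=\sigma_{1-s}(m)/\zeta(s)$ is Ramanujan's identity with the roles of modulus and argument interchanged; what is true is $\sum_{q\geq 1}c_q(n)q^{-s}=\sigma_{1-s}(n)/\zeta(s)$ (sum over the \emph{modulus}), whereas the inner sum in \eqref{dskdefinition} is over the \emph{argument}. Using the evaluation $c_m(n)=\sum_{d\mid(m,n)}d\,\mu(m/d)$ that you yourself cite, one gets instead
\begin{align*}
\sum_{n=1}^{\infty}\frac{c_m(n)}{n^s}
=\zeta(s)\sum_{d\mid m}\mu\!\left(\frac{m}{d}\right)d^{1-s},
\end{align*}
with $\zeta(s)$ in the numerator; already $m=1$ refutes your version (left side $\zeta(s)$, right side $1/\zeta(s)$). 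This is not a bookkeeping issue that the later ``adjust the factoring convention'' step can absorb: with your identity the prefactor of the $m$-sum is $1/\zeta(s)$, so your $D_1(s,r)$ has no pole at $s=1$ (the pole on which the whole saddle-point analysis later depends), and your local factor $1-\frac{1+p^{1-s}}{(p-1)p^{r+1}}$ differs from the correct one by more than a sign, so it cannot be recombined into $\frac{\zeta(s)}{\zeta(s+r+1)}K_r(s)$.

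That said, your overall plan (evaluate the inner sum first, use multiplicativity over squarefree $m$, then normalise) is sound and, once the identity is corrected, gives a shorter route than the paper's. The summand becomes multiplicative with Euler factor $1+\frac{1-p^{1-s}}{(p-1)p^{r+1}}$ at each prime, and the elementary identity
\begin{align*}
1+\frac{1-p^{1-s}}{(p-1)p^{r+1}}
=\left(1-\frac{1}{p^{s+r+1}}\right)\left(1+\frac{1}{p^{r+1}(p-1)}\cdot\frac{1-p^{-s}}{1-p^{-(s+r+1)}}\right)
\end{align*}
yields exactly the paper's intermediate form $D_1(s,r)=\frac{\zeta(s)}{\zeta(s+r+1)}\prod_p\left(1+\frac{1}{p^{r+1}(p-1)}\frac{1-p^{-s}}{1-p^{-(s+r+1)}}\right)$, after which the extraction of $C(r)$ and the geometric expansion defining $A_{p,r}(s)$ go through as you describe. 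The paper instead uses $c_m(n)=\varphi(m)\mu(m/(m,n))/\varphi(m/(m,n))$ and the gcd substitution $m=k\delta$, $n=\ell\delta$, evaluating the $\ell$- and $\delta$-sums to produce $\zeta(s)/\zeta(s+r+1)$ before recombining over $k$; your corrected route avoids that substitution at the cost of first proving the inner-sum formula. A minor further quibble: the leading term of $A_{p,r}(s)$ is of size $p^{-\sigma-r-2}$, not $p^{-\sigma-r-1}$, though your conclusion that the product converges for $\mathrm{Re}(s)+r>0$ is unaffected.
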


\begin{proof}
Using relation (3.3) of~\cite[\S 3.2]{iwanieckowlaski}, the following expression simplifies as
\begin{align*}
\frac{c_m(n)}{\varphi(m)}
=\frac{1}{\varphi(m)}\frac{\varphi(m)\mu\left({m}/{(m,n)}\right)}{\varphi\left({m}/{(m,n)}\right)}
=\frac{\mu\left({m}/{(m,n)}\right)}{\varphi\left({m}/{(m,n)}\right)}.
\end{align*}

\noindent
By rearranging the terms in~\eqref{dskdefinition} and substituting the above relation, we obtain
\begin{align}\label{dskmodified}
D_1(s,r)
=\sum_{m=1}^{\infty}\frac{\mu(m)}{m^{r+1}}\sum_{n=1}^{\infty}\frac{\mu({m}/{(m,n)})}{\varphi\left({m}/{(m,n)}\right)n^{s}}.
%=\zeta(\mathfrak{s})\sum_{m=1}^{\infty}\frac{\mu(m)(-1)^{\omega(m)}}{m^{r+1+\mathfrak{s}}}\prod_{p|m}\left(\frac{p^{\mathfrak{s}}-1}{p-1}-1\right),
\end{align}

\noindent
Let $\delta=(m,n)$, then we may write $m=k\delta$ and $n=\ell\delta$. Consequently, upon making the change of variables in \eqref{dskmodified}, we obtain
\begin{align*}
D_1(s,r) 
=&\sum_{\delta\geq 1}\mathop{\sum\sum}_{(m,n)=\delta}\frac{\mu(m)}{m^{r+1}}\frac{1}{n^{s}}\frac{\mu({m}/{(m,n)})}{\varphi\left({m}/{(m,n)}\right)}\\
=&\sum_{\delta\geq 1}\mathop{\sum\sum}_{(k,\ell)=1}\frac{\mu(k\delta)}{(k\delta)^{r+1}}\frac{1}{(\ell\delta)^{s}}\frac{\mu(k)}{\varphi(k)}.
\end{align*}
\noindent
Rearranging the order of summation yields
\begin{align*}
D_1(s,r)
=\sum_{k=1}^{\infty}\frac{\mu^2(k)}{\varphi(k)k^{r+1}}\mathop{\sum\sum}_{(\delta,k)=1}\frac{\mu(\delta)}{\delta^{s+r+1}}\sum_{(\ell,k)=1}\frac{1}{\ell^{s}}.
\end{align*}
\noindent
The sum over $\ell$ equals to
\begin{align*}
\zeta(s)\prod_{p|k}\left(1-\frac{1}{p^{s}}\right),
\end{align*}
and the sum over $\delta$ equals to
\begin{align*}
\frac{1}{\zeta(s+r+1)}\prod_{p|k}\left(1-\frac{1}{p^{s+r+1}}\right)^{-1}.
\end{align*}

\noindent
By rearranging the sum over $k$ and invoking multiplicativity, we deduce that
\begin{align}\label{dskeulerproductform}
D_1(s,r)
=\frac{\zeta(s)}{\zeta(s+r+1)}\prod_p\left(1+\frac{1}{p^{r+1}(p-1)}\frac{1-p^{-{s}}}{1-p^{-(s+r+1)}}\right)
=\frac{\zeta(s)}{\zeta(s+r+1)}K_r(s).
\end{align}

\noindent
Observe that as $\mathrm{Re}(s)\to\infty$, the factors $1-p^{-s}$ and $1-p^{-(s+r+1)}$ tend to 1 for any fixed integer $r\geq 1$. In contrast, the factor 
${1}/{p^{r+1}(p-1)}$ remains unchanged, as it is independent of $s$. Hence, we normalise the Euler product $K_r(s)$ as
\begin{align*}
K_r(s)
=&\prod_p\frac{\left(1+\frac{1}{p^{r+1}(p-1)}\frac{1-p^{-{s}}}{1-p^{-(s+r+1)}}\right)}{1+\frac{1}{p^{r+1}(p-1)}}\left(1+\frac{1}{p^{r+1}(p-1)}\right)\\
=&{C(r)}\prod_{p}\frac{\left(1+\frac{1}{p^{r+1}(p-1)}(1+B_{p,r}(s))\right)}{1+\frac{1}{p^{r+1}(p-1)}},
\end{align*}
where $C(r)$ is given in~\eqref{crdefinition}, and
\begin{align}\label{bprs}
B_{p,r}(s)
=\frac{1-\frac{1}{p^s}}{1-\frac{1}{p^{s+r+1}}}-1
=&\left(1-\frac{1}{p^s}\right)\left(1+\frac{1}{p^{s+r+1}}+\frac{1}{p^{2(s+r+1)}}+\cdots\right)-1\nonumber\\
=&-\left(\frac{1}{p^s}-\frac{1}{p^{s+r+1}}+\frac{1}{p^{2s+r+1}}-\cdots-\frac{1}{p^{(k+1)(s+r+1)}}+\cdots\right).
\end{align}

\noindent
In this context, it is worth noting that the decomposition argument employed herein was adopted from \cite[\S3]{alexandru}. Thus, the Euler product in~\eqref{dskeulerproductform} can be expressed as
\begin{align*}
K_r(s)=C(r)\prod_{p}(1+A_{p,r}(s)) ,   
\end{align*}
where
\begin{align*}
A_{p,r}(s)
=&\frac{1+\left(\frac{1}{p^{r+1}(p-1)}\left(1+B_{p,r}(s)\right)\right)}{1+\frac{1}{p^{r+1}(p-1)}}-1\\
=&-\frac{\frac{1}{p^{r+1}(p-1)}}{1+\frac{1}{p^{r+1}(p-1)}}\left(\frac{1}{p^s}-\frac{1}{p^{s+r+1}}+\frac{1}{p^{2s+r+1}}-\frac{1}{p^{2s+2r+2}}+\cdots-\frac{1}{p^{(k+1)s+r+1}}+\cdots\right).
\end{align*}
By~\eqref{bprs}, the above assertion follows, thereby concluding the lemma.
\end{proof}

\begin{remark}
Note that the constant $C(r)$ defined in~\eqref{crdefinition} may be regarded as the generalised totient summatory constant. In particular, when $r=1$, we obtain
\begin{align*}
C(1)=\prod_{p}\left(1+\frac{1}{p^2(p-1)}\right)=1.339784\ldots,   
\end{align*}
a result that can be deduced from the well-known Landau totient constant
\begin{align*}
A
=\zeta(2)C(1)
=\zeta(2)\prod_{p}\left(1+\frac{1}{p^2(p-1)}\right)
=2.20386\ldots.  
\end{align*}
\end{remark}

Let $s=\sigma+it$ with $\sigma, t\in\mathbb{R}$. We have
\begin{align*}
\left|1-\frac{1}{p^{s+r+1}}\right|  
\geq 1-\frac{1}{2^{\sigma+r+1}},\quad\mathrm{and}\quad
\left|\frac{1}{p^s}\right|
= \frac{1}{p^{\sigma}},
\end{align*}
then
\begin{align*}
|B_{p,r}|
=&\left|\frac{1}{p^s}-\frac{1}{p^{s+r+1}}+\frac{1}{p^{2s+r+1}}-\frac{1}{p^{2s+2r+2}}+\cdots-\frac{1}{p^{(k+1)s+r+1}}+\cdots\right|
<&\frac{\frac{1}{p^{\sigma}}}{1-\frac{1}{p^{\sigma+r+1}}}.
\end{align*}
\noindent
Additionally, for a fixed integer $r\geq 1$,
\begin{align*}
{\frac{1}{p^{r+1}(p-1)}}\left({1+\frac{1}{p^{r+1}(p-1)}}\right)^{-1}<\frac{1}{p^{r+1}}.    
\end{align*}
\noindent
so that
\begin{align*}
\sum_{p}|A_{p,r}(s)|
<\frac{2^{\sigma+r+1}}{2^{\sigma+r+1}-1}\sum_{p}\frac{1}{p^{\sigma+r+1}}
<\infty.
\end{align*}
\noindent
It follows that the Euler product $K_r(s)$ converges absolutely (see, for instance,~\cite[\S1.42]{titchmarsh}) and defines an analytic function on the half-plane $\mathrm{Re}(s)>1$. Furthermore, for any fixed $\sigma_0>-r$,
\begin{align*}
|K_r(s)|
\leq C(r)\prod_{p}\left(1+|A_{p,r}(s)|\right)
\leq C(r)\prod_{p}\left(1+\frac{2^{\sigma_0+r+1}}{(2^{\sigma_0+r+1}-1)p^{\sigma_0+r+1}}\right).
\end{align*}

\noindent
Hence, $K_r(s)$ is uniformly bounded on the half-plane $\mathrm{Re}(s)>\sigma_0$. Since it is analytic for $\mathrm{Re}(s)>1$, the Dirichlet series $D_1(s,r)$ admits a meromorphic continuation in this region. Moreover, because $\zeta(s)$, it has a simple pole at point $s=1$ and $1/\zeta(s+r+1)$ has no zeros on the half-plane where $\mathrm{Re}(s)+r>0$, it follows that $D_1(s,r)$ extends analytically to the region $\mathrm{Re}(s)+r>0$ and $\mathrm{Re}(s)>1$ except for a simple pole at $s=1$. Assuming the Riemann Hypothesis, the function may be analytically continued to the half-plane $\mathrm{Re}(s)>-r-\frac{1}{2}$.

\medskip

From Lemma~\ref{dsklemma} and the foregoing discussion, we conclude the following result.

\begin{proposition}\label{arsconvergence}
For a fixed integer $r\geq 1$ and $s=\sigma+it$ with $\sigma,t\in\mathbb{R}$, the Dirichlet series $D_1(s,r)$ has analytic continuation to the half plane for $\mathrm{Re}(s)>-r$ and $\mathrm{Re}(s)>1$, with an exception of a pole at the point $s=1$. Furthermore, this function has analytic continuation to the half-plane $\mathrm{Re}(s)>-r-\frac{1}{2}$ if and only if the Riemann hypothesis is true.    
\end{proposition}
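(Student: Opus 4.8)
The plan is to read off Proposition~\ref{arsconvergence} directly from the Euler product factorisation established in Lemma~\ref{dsklemma}, namely
\[
D_1(s,r)=\frac{\zeta(s)}{\zeta(s+r+1)}\,K_r(s),
\]
together with the absolute-convergence bounds for $K_r(s)$ derived in the discussion immediately preceding the proposition. First I would record that the series defining $D_1(s,r)$ in~\eqref{dskdefinition} converges absolutely for $\mathrm{Re}(s)>1$ (using $|c_m(n)|\le\varphi(m)$, or more simply $|c_m(n)|\le m$, so that the inner sum is $O(m/n^{\sigma})$ and the outer sum converges since $r+1\ge 2$), so that the identity of Lemma~\ref{dsklemma} is valid there. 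Then I would invoke the estimate
\[
\sum_{p}|A_{p,r}(s)|<\frac{2^{\sigma+r+1}}{2^{\sigma+r+1}-1}\sum_{p}\frac{1}{p^{\sigma+r+1}}<\infty
\]
valid for $\sigma=\mathrm{Re}(s)>-r$, which shows (by the standard criterion for absolute convergence of infinite products, e.g.~\cite[\S1.42]{titchmarsh}) that $K_r(s)=C(r)\prod_p(1+A_{p,r}(s))$ converges absolutely and hence defines a bounded holomorphic function on each half-plane $\mathrm{Re}(s)>\sigma_0$ for every fixed $\sigma_0>-r$; in particular $K_r(s)$ is holomorphic and nonvanishing-free of any issue on $\mathrm{Re}(s)>-r$.

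Next I would assemble the meromorphic continuation. The factor $\zeta(s)$ is holomorphic on $\mathbb{C}\setminus\{1\}$ with a simple pole at $s=1$; the factor $1/\zeta(s+r+1)$ is holomorphic wherever $\zeta(s+r+1)\ne 0$, and since $\zeta$ has no zeros in the closed half-plane $\mathrm{Re}(\,\cdot\,)\ge 1$, the translate $1/\zeta(s+r+1)$ is holomorphic for $\mathrm{Re}(s)+r+1\ge 1$, i.e. for $\mathrm{Re}(s)\ge -r$; combined with the region $\mathrm{Re}(s)>-r$ on which $K_r(s)$ is holomorphic, the product $\frac{\zeta(s)}{\zeta(s+r+1)}K_r(s)$ gives a holomorphic continuation of $D_1(s,r)$ to $\{\mathrm{Re}(s)>-r\}\setminus\{1\}$ with at worst a simple pole at $s=1$ (and it is genuinely a pole, with residue $\mathrm{Res}_{s=1}\zeta(s)\cdot\frac{K_r(1)}{\zeta(r+2)}=\frac{K_r(1)}{\zeta(r+2)}\ne 0$, since $K_r(1)$ is a convergent product of nonzero factors). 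For the conditional statement: under the Riemann Hypothesis all nontrivial zeros of $\zeta$ lie on $\mathrm{Re}(\,\cdot\,)=\tfrac12$, so $1/\zeta(s+r+1)$ is holomorphic for $\mathrm{Re}(s)+r+1>\tfrac12$, i.e. $\mathrm{Re}(s)>-r-\tfrac12$, while the bound on $\sum_p|A_{p,r}(s)|$ already gives holomorphy of $K_r(s)$ on $\mathrm{Re}(s)>-r$, which contains $\mathrm{Re}(s)>-r-\tfrac12$ up to the trivial-zero-free strip — more carefully, one extends $K_r$ using the same Dirichlet-series bound which is valid for all $\sigma>-r$, and $\zeta(s)$ itself is entire apart from $s=1$, so the only obstruction below $\mathrm{Re}(s)=-r$ comes from zeros of $\zeta(s+r+1)$, placing the boundary of holomorphy at $\mathrm{Re}(s)=-r-\tfrac12$ under RH. Conversely, if $D_1(s,r)$ continues holomorphically past $\mathrm{Re}(s)=-r-\tfrac12$, then since $\zeta(s)/K_r(s)$ is holomorphic and nonzero there, $1/\zeta(s+r+1)$ would be holomorphic on $\mathrm{Re}(s)>-r-\tfrac12-\varepsilon$, forcing $\zeta$ to be zero-free on $\mathrm{Re}(\,\cdot\,)>\tfrac12-\varepsilon$, which is RH; this gives the ``if and only if''.

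The main obstacle is not any single deep estimate — everything rests on the already-proved Lemma~\ref{dsklemma} and the convergence bound for $\sum_p|A_{p,r}(s)|$ — but rather the bookkeeping of matching the half-plane of holomorphy of $K_r(s)$ against the half-plane on which $1/\zeta(s+r+1)$ is known (or conjectured) to be holomorphic, and being careful that the trivial zeros of $\zeta$ (at negative even integers) of $\zeta(s+r+1)$ occur at $s=-r-1-2k<-r-\tfrac12$ and so do not interfere with the stated regions. A secondary subtlety is the reverse implication in the ``if and only if'': one must argue that the potential zeros of $\zeta(s+r+1)$ in the critical strip genuinely produce poles of $D_1(s,r)$ (i.e. are not cancelled by zeros of $\zeta(s)K_r(s)$), which follows because $\zeta(s)$ has no zeros for $\mathrm{Re}(s)>-r$ outside the trivial ones at negative even integers $\le -r-?$ — here one uses that in the strip $-r-\tfrac12<\mathrm{Re}(s)<\tfrac12$ the factor $\zeta(s)$ can itself vanish, so strictly one should localise the argument near a putative zero $\rho-r-1$ of $\zeta(s+r+1)$ with $\mathrm{Re}(\rho)>\tfrac12$ and note that generically $\zeta(s)K_r(s)$ does not vanish there; I would phrase this implication slightly more loosely, as is standard, or restrict the ``only if'' claim to the statement that $D_1(\cdot,r)$ cannot be holomorphic on all of $\mathrm{Re}(s)>-r-\tfrac12$ unless $\zeta$ is zero-free on $\mathrm{Re}(\cdot)>\tfrac12$.
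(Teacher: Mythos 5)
Your argument is essentially the paper's own proof: the proposition is read off the factorisation $D_1(s,r)=\zeta(s)K_r(s)/\zeta(s+r+1)$ from Lemma~\ref{dsklemma}, the absolute-convergence bound on $\sum_p|A_{p,r}(s)|$, and the zero-free half-plane (respectively RH) for $\zeta(s+r+1)$, exactly as in the discussion preceding the proposition. If anything you are more careful than the paper, since you compute the residue at $s=1$ and explicitly flag the two soft spots that the paper leaves untreated: the displayed bound only yields analyticity of $K_r(s)$ for $\mathrm{Re}(s)>-r$, so reaching $\mathrm{Re}(s)>-r-\tfrac12$ under RH needs the sharper (but routine) estimate $|A_{p,r}(s)|\ll p^{-(\sigma+r+2)}$ valid for $\sigma>-r-1$, and the ``only if'' direction requires ruling out cancellation of poles coming from zeros of $\zeta(s+r+1)$ by zeros of $\zeta(s)K_r(s)$ in the strip $-r-\tfrac12<\mathrm{Re}(s)<-r$, an argument the paper does not give at all.
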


Observe that
\begin{align}\label{cc'relation}
c'_{\chi}(n)
=\sum_{\substack{b\bmod{m}\\(b,m)=1}}\chi(b)e\left(\frac{bn}{m}\right)
=\sum_{\substack{b\bmod{m}}}\chi_0(b)\chi(b)e\left(\frac{bn}{m}\right)
=\sum_{\substack{b\bmod{m}}}\chi_1(b)e\left(\frac{bn}{m}\right)
=c'_{\chi_1}(n),
\end{align}
where $\chi_1$ is a Dirichlet character modulo $m$ defined by $\chi_1=\chi_0\chi$. Note that in Lemma~\ref{shiftedramanujansumformula}, the Dirichlet character $\chi$ is not necessarily a primitive character. Therefore, we further induce it by $\chi^*$ modulo $m^*$, where 
$\chi^*$ is a primitive Dirichlet character such that $m^*|m$.

\footnotetext{Note that $\chi^*_0\chi_0$ is a principal character mod $m$ because $m^*|m$.}

Then we can write $\chi=\chi^*_0\chi^{*}$ for principal character $\chi^*_0$. Then the generalised Gauss sum can be written as
\begin{align}\label{charactersumdecomposition}
c_{{\chi}}(n)
=&\sum_{\substack{b\bmod{m}}}{\chi}_0(b)\chi^{*}(b)e\left(\frac{bn}{m}\right)
=\sum_{\substack{b\bmod{m}\\(b,m)=1}}\chi^{*}(b)e\left(\frac{bn}{m}\right)\nonumber\\
=&\frac{\varphi(m)}{\varphi(m^*)}\sum_{\substack{b\bmod{m^*}\\(b,m^*)=1}}\chi^{*}(b)e\left(\frac{bn}{m^*}\right)
=\frac{\varphi(m)}{\varphi(m^*)}c'_{\chi^*}(n),
\end{align}
where we have used~\cite[Theorem 5.33(a)]{apostol} and~\eqref{generalizedgausssum} in the last step. 

\begin{comment}
Thus, for an arithmetic function $f$, the following identity is true
\begin{align}
\sum_{\chi(\bmod{m})}f(\chi)
=\sum_{m^*|m}\frac{\varphi(m)}{\varphi(m^*)}\sum_{\chi^*(\bmod{m^*})}f(\chi^*).
\end{align}
\end{comment}

\begin{lemma}\label{gausssumlemma}
Let $r>1$ be fixed, and for a Dirichlet character $\chi$ modulo $m$, define
\begin{align}\label{d2sksum}
D_2(s,r)
\coloneqq\sum_{m=1}^{\infty}\frac{1}{\varphi(m)m^{r+1}}\sum_{\substack{\chi(\bmod{m})\\\chi\ne\chi_0}}\tau(\chi)\sum_{n=1}^{\infty}\frac{c'_{\Bar{\chi}}(n)}{n^{s}}.
\end{align}
\noindent
Let $\mathrm{Re}(s)=\sigma$, then the following inequality
\begin{align*}
|D_2(s,r)|
\leq\frac{\zeta(\sigma)\zeta(\sigma+r)\zeta(\sigma+r+1)}{\zeta(2(\sigma+r))}\zeta(r)E_r(\sigma)
\end{align*}
holds for $\sigma>1$, where
\begin{align}\label{erdefinition}
E_r(\sigma)
=C'(r)\prod_{p}(1+J_{p,r}(\sigma))
\end{align}
with
\begin{align*}
J_{p,r}(\sigma)
=-\frac{\frac{1-p^{-r}}{p^{r+1}}\left(\frac{1}{p^{\sigma}}+\frac{1}{2p^{2\sigma+2r+1}}\right)}{1+\frac{1-p^{-r}}{p^{r+1}}}
\end{align*}
and
\begin{align}\label{cprimerdefinition}
C'(r)
\coloneqq\prod_{p}\left(1+\frac{1}{p^{r+1}}\left(1-\frac{1}{p^r}\right)\right).
\end{align}
\end{lemma}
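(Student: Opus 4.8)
The plan is to mirror the structure of the proof of Lemma~\ref{dsklemma}, but now tracking the contribution of the non-principal characters. First I would use~\eqref{charactersumdecomposition} to reduce each generalised Gauss sum $\tau(\chi)=c_\chi(1)$ and each twisted sum $c'_{\bar\chi}(n)$ to primitive data: writing $\chi=\chi_0^*\chi^*$ with $\chi^*$ primitive modulo $m^*\mid m$, one has $\tau(\chi)=\tfrac{\varphi(m)}{\varphi(m^*)}c'_{\chi^*}(1)$, and by~\eqref{cc'relation} the inner Dirichlet series $\sum_n c'_{\bar\chi}(n)n^{-s}$ becomes a sum over $c_{\bar\chi^*}(n)$; for a \emph{primitive} character modulo $m^*$ the Ramanujan-type sum factors as $c_{\bar\chi^*}(n)=\overline{\chi^*(n)}\,\tau(\bar\chi^*)$ when $(n,m^*)=1$ and (more carefully) vanishes otherwise, so that the inner series collapses to $\tau(\bar\chi^*)L(s,\bar\chi^*)$ up to an Euler factor removing the primes dividing $m^*$. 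Since $|\tau(\chi^*)|=\sqrt{m^*}$ for primitive $\chi^*$, each character contributes a factor of size $m^*$, and after reorganising the double sum over $m$ and $\chi$ by grouping according to the conductor $m^*$ — using the standard identity that summing an arithmetic function of $\chi^*$ over all $\chi$ mod $m$ equals $\sum_{m^*\mid m}\tfrac{\varphi(m)}{\varphi(m^*)}\sum_{\chi^*\bmod m^*}$ — the outer $m$-sum becomes $\sum_{m^* }\tfrac1{m^{*\,r+1}}\sum_{d}\tfrac{1}{d^{r+1}}(\ldots)$ type expression.

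Next I would take absolute values throughout: $|\tau(\chi)|\le \sqrt{m^*}$, $|L(s,\bar\chi^*)|\le\zeta(\sigma)$, and $|c'_{\bar\chi}(n)|$ is controlled by divisor-type bounds, to reduce everything to a product of zeta values. The heuristic target $\tfrac{\zeta(\sigma)\zeta(\sigma+r)\zeta(\sigma+r+1)}{\zeta(2(\sigma+r))}\zeta(r)$ suggests the bookkeeping: $\zeta(\sigma)$ from the $L$-function, $\zeta(\sigma+r+1)$ from the $m^{-(r+1)}$ weight crossed with the conductor bound $\sqrt{m^*}$ distributed over $m=m^* d$, the ratio $\zeta(\sigma+r)/\zeta(2(\sigma+r))$ from an Euler factor of the shape $(1+p^{-(\sigma+r)})$ (a consequence of the $\mu^2$-type support after the conductor rearrangement, exactly as $\sum \mu^2(k)k^{-w}=\zeta(w)/\zeta(2w)$), and the stray $\zeta(r)$ from summing $m^{*\,-r}$ arising from $\sqrt{m^*}\cdot m^{*\,-(r+1)}\cdot(\text{something})$ — I would pin down the exact exponent tallies by a careful per-prime computation. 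Then, exactly as in Lemma~\ref{dsklemma}, I would factor out the $s$-independent part of the resulting Euler product: writing the generic local factor as $1+\tfrac{1-p^{-r}}{p^{r+1}}(1+(\text{$s$-dependent tail}))$, divide and multiply by $1+\tfrac{1-p^{-r}}{p^{r+1}}$ to extract the constant $C'(r)=\prod_p(1+p^{-(r+1)}(1-p^{-r}))$ of~\eqref{cprimerdefinition}, leaving $E_r(\sigma)=C'(r)\prod_p(1-J_{p,r}(\sigma))$ with $J_{p,r}$ as displayed; the term $\tfrac1{2p^{3\sigma+2r+1}(1-2^{-\sigma+r+1})}$ is precisely the kind of geometric-series tail bound (using $|1-p^{-(s+r+1)}|\ge 1-2^{-(\sigma+r+1)}$ on a single prime) that one gets from estimating the alternating tail in $B_{p,r}(s)$ from~\eqref{bprs}, and I would reuse that estimate verbatim.

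The main obstacle, I expect, is the careful handling of the non-primitive characters and the resulting conductor bookkeeping: unlike Lemma~\ref{dsklemma}, where the Ramanujan sum $c_m(n)$ has the clean closed form $\tfrac{\varphi(m)\mu(m/(m,n))}{\varphi(m/(m,n))}$, here the twisted sums $c'_{\bar\chi}(n)$ only simplify after inducing to the primitive conductor, and one must correctly account for (i) the factor $\varphi(m)/\varphi(m^*)$ appearing twice (once from $\tau(\chi)$ and once from $c'_{\bar\chi}$), (ii) the Euler factors at primes $p\mid m^*$ that are removed from the $L$-function, and (iii) ensuring the rearranged sum over the "cofactor" $d=m/m^*$ genuinely produces a convergent Euler product for $\sigma>1$. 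A secondary subtlety is that the claimed bound involves $E_r(\sigma)$ written with a product $\prod_p(1-J_{p,r})$ rather than $\prod_p(1+\cdots)$, so I must check that $J_{p,r}(\sigma)\ge 0$ for $\sigma>1$ (which holds since $1-2^{-\sigma+r+1}$ — wait, this needs $\sigma>r+1$ for positivity of that inner parenthesis, so more likely the intended reading uses $|1-2^{-(\sigma+r+1)}|$ and I would state it that way) and that the infinite product converges; absolute convergence for $\sigma>1$ follows from $J_{p,r}(\sigma)=O(p^{-(r+1)})$ uniformly, by the same comparison with $\sum_p p^{-(r+1)}<\infty$ used after Lemma~\ref{dsklemma}. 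Once the algebra of the conductor reduction is organised correctly, the remaining estimates are routine applications of $|\tau|\le\sqrt{m^*}$, trivial bounds on $L$-functions on $\mathrm{Re}(s)>1$, and the geometric-tail estimate already established for $B_{p,r}$.
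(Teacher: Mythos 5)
Your outline follows the paper's skeleton (reduce to the primitive conductor, rearrange the double sum, take absolute values, bound $L$-functions by zeta values, and finally normalise the Euler product through $C'(r)$ to get $E_r(\sigma)$, with $\zeta(r)$ coming from the $m^{*-r}$ sum), but there is a genuine gap at the central step: the evaluation of $\sum_n c'_{\bar\chi}(n)n^{-s}$ for \emph{imprimitive} $\chi$. This series does not ``collapse to $\tau(\bar\chi^*)L(s,\bar\chi^*)$ up to an Euler factor removing the primes dividing $m^*$'': the terms with $(n,m)=\delta>1$ contribute, and the correct evaluation (the paper uses Theorems 9.10 and 9.12 of \cite{montgomeryvaughanbook}, not the clean primitive identity) carries the factor $\varphi(m)/\varphi(m/(m,n))$ together with the constraint $m^*\mid m/(m,n)$. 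After the substitution $m=\delta m^*k$, $n=\delta\ell$, this produces a genuinely new Dirichlet series in $\delta$ with coefficients $\mu(\delta)\chi^*(\delta)\varphi(\delta)\delta^{-(s+r+1)}$, i.e.\ essentially the ratio $L(s+r+1,\chi^*)/L(s+r,\chi^*)$, sitting alongside $L(s,\chi^*)$ from the $\ell$-sum. That ratio is precisely where the factors $\zeta(\sigma+r+1)$ and $\zeta(\sigma+r)/\zeta(2(\sigma+r))$ in the stated bound come from, and controlling it requires the lower bound $|L(w,\chi^*)|\geq \zeta(2\,\mathrm{Re}\,w)/\zeta(\mathrm{Re}\,w)$ (cf.\ \cite[Lemma 2.1]{gregmartin}) applied at $w=s+r$ --- an ingredient your plan never invokes. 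Your proposed bookkeeping assigns these factors to the wrong sums: $\zeta(\sigma+r+1)$ does not come from crossing $m^{-(r+1)}$ with $\sqrt{m^*}$, and $\zeta(\sigma+r)/\zeta(2(\sigma+r))$ does not come from $\sum_k\mu^2(k)k^{-w}=\zeta(w)/\zeta(2w)$; in the actual argument the $\mu^2(k)$-sum and the $m^*$-sum carry $s$-independent main weights and are absorbed into $\zeta(r)E_r(\sigma)$, so the mechanism you describe cannot reproduce the claimed inequality.

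Two smaller points. Using \eqref{charactersumdecomposition} at $n=1$ to write $\tau(\chi)=\tfrac{\varphi(m)}{\varphi(m^*)}\tau(\chi^*)$ is incorrect: the Gauss sum of an induced character satisfies $\tau(\chi)=\mu(m/m^*)\chi^*(m/m^*)\tau(\chi^*)$, with no $\varphi$-ratio (since $(1,m)=1$), and carrying the spurious factor $\varphi(m)/\varphi(m^*)$ would not land on the stated constants; in the paper's proof the $\varphi$-ratio enters only through $c'_{\bar\chi}(n)$ and, after the change of variables, is exactly what supplies the crucial $\varphi(\delta)$ in the $\delta$-sum. On the other hand, your observation about the sign and convergence of $J_{p,r}(\sigma)$ is well taken: the proof actually produces the local factor $1+\tfrac{1-p^{-r}}{p^{r+1}}\cdot\tfrac{1}{2p^{3\sigma+2r+1}(1-2^{-(\sigma+r+1)})}$ before normalising by $C'(r)$, so the displayed $J_{p,r}$ should indeed be read with $1-2^{-(\sigma+r+1)}$; that is a presentational quirk of the statement rather than a defect of your plan.
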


\begin{proof}
Let $\chi$ be a Dirichlet character modulo $m$, induced by a primitive character $\chi^{*}$ modulo $m^{*}$, where $m^{*}|m$. From~\eqref{cc'relation}, it is evident that $c'_{\Bar{\chi}}(n)=c'_{\Bar{\chi}_1}(n)$ with $\bar{\chi}_1=\chi_0\bar{\chi}=\chi_0\chi^*_0\bar{\chi}^*=\chi_0\Bar{\chi}^*$.

\medskip

Beginning with the simplification of the coefficient of the Dirichlet series, and applying Theorem 9.10 and Theorem 9.12 of~\cite{montgomeryvaughanbook}, we obtain

\begin{align*}
%\tau(\chi)c_{\Bar{\chi}}(n)
c'_{\Bar{\chi}}(n)c'_{{\chi}}(1)
=&\chi^{*}\left(\frac{n}{(m,n)}\right)\bar{\chi}^{*}\left(\frac{m/(m,n)}{m^{*}}\right)\mu\left(\frac{m/(m,n)}{m^{*}}\right)\frac{\varphi(m)}{\varphi(m/(m,n))}\mu\left(\frac{m}{m^{*}}\right)\chi^{*}\left(\frac{m}{m^{*}}\right)c'_{\bar{\chi}^{*}}(1)c'_{{\chi^{*}}}(1)\\
=&m^{*}\chi^{*}\left(\frac{n}{(m,n)}\right)\bar{\chi}^{*}\left(\frac{m/(m,n)}{m^{*}}\right)\mu\left(\frac{m/(m,n)}{m^{*}}\right)\frac{\varphi(m)}{\varphi(m/(m,n))}\mu\left(\frac{m}{m^{*}}\right)\chi^{*}\left(\frac{m}{m^{*}}\right).
\end{align*}

\noindent
Let $m=\delta m^*k$ and $n=\delta\ell$ with $(m,n)=\delta$, so that $(m^*k,\ell)=1$. With this change of variables, the above expression becomes
\begin{align}\label{simplifiedcoefficient}
\frac{c'_{\Bar{\chi}}(n)c'_{{\chi}}(1)}{\varphi(m)}
=&m^{*}\chi^{*}\left(\ell\right)\bar{\chi}^{*}\left(k\right)\mu\left(k\right)\frac{1}{\varphi(m^*k)}\mu\left(\delta k\right)\chi^{*}\left(\delta k\right)\nonumber\\
=&m^{*}\chi^{*}\left(\ell\right)\bar{\chi}^{*}\left(k\right)\mu\left(k\right)\frac{1}{\varphi(m^*k)}\mu\left(\delta\right)\mu\left( k\right)\chi^{*}\left(\delta\right)\chi^{*}\left(k\right)\nonumber\\
=&m^{*}\chi^{*}\left(\ell\right)\mu^2\left(k\right)\frac{1}{\varphi(m^*k)}\mu\left(\delta\right)\chi^{*}\left(\delta\right).
\end{align}

%Note that $(\delta,k)=1$ and $(\ell,k)=1$. 
\noindent
Rearranging the terms in~\eqref{d2sksum}, and substituting~\eqref{simplifiedcoefficient} with the change of variable, we obtain
\begin{align*}
D_2(s,r)
=&\sum_{m=1}^{\infty}\frac{1}{m^{r+1}}\sum_{\substack{\chi(\bmod{m})\\\chi\ne\chi_0}}\sum_{n=1}^{\infty}\frac{c'_{\chi}(1)c'_{\Bar{\chi}}(n)}{\varphi(m)n^{s}}\\
%=&\sum_{m=1}^{\infty}\frac{m^*\varphi(m)\mu(m/m^*)}{\varphi(m)m^{r+1}}\sum_{\substack{\chi(\bmod{m})\\\chi\ne\chi_0}}\chi^*(m/m^*)\sum_{n=1}^{\infty}\frac{\chi^*(n/(n,m)m^*)\Bar{\chi}(m/(n,m)m^*)\mu(m/(m,n)m^*)}{\varphi(m/(m,n))n^s}\\
=&\sum_{\delta\geq 1}\mathop{\sum\sum}_{\substack{k,\ell\geq 1\\(\ell,k)=1}}\sum_{m^*\geq 1}\frac{\varphi(m^*k\delta)}{\varphi(m^*)\varphi(m^*k)}\sum_{\chi^*(\bmod{m^*})}\frac{m^*\mu(\delta k)}{(m^*\delta k)^{r+1}}\chi^*(\delta k)\frac{\mu(k)\chi^*(\ell)\Bar{\chi}^*(k)}{(\ell\delta)^s}.
%=&\sum_{k\geq 1}\frac{\mu^2(k)}{k^{r+1}}\sum_{m^*\geq 1}\frac{1}{\varphi(m^{*}){m^*}^{r}}\sum_{\substack{\chi^*(\bmod m^*)}}\sum_{\substack{\delta\geq 1\\(m^*,k)=1\\(\delta,m^*)=1}}\frac{\mu(\delta)\chi^*(\delta)\varphi(\delta)}{\delta^{s+r+1}}\sum_{\substack{\ell\geq 1\\(\ell,m^*k)=1}}\frac{\chi^{*}(\ell)}{\ell^s}.
\end{align*}
\noindent
Furthermore, for $(m^*,k)=1$ and $(\delta,k)=1$, we have
\begin{align*}
\frac{\varphi(m)}{\varphi(m^*k)\varphi(m^*)}
=\frac{\varphi(m^*k\delta)}{\varphi(m^*k)\varphi(m^*)}
=\frac{\varphi(\delta)}{\varphi(m^*)}.
\end{align*}

\noindent
Rearranging the order of summation, we obtain
\begin{align*}
D_2(s,r)
=\sum_{k\geq 1}\frac{\mu^2(k)}{k^{r+1}}\sum_{\substack{m^*\geq 1\\ ({m^*},k)=1}}\frac{1}{\varphi(m^{*}){m^*}^{r}}\sum_{\substack{\chi^*(\bmod m^*)}}\sum_{\substack{\delta\geq 1\\(\delta,k)=1}}\frac{\mu(\delta)\chi^*(\delta)\varphi(\delta)}{\delta^{s+r+1}}\sum_{\substack{\ell\geq 1\\(\ell,k)=1}}\frac{\chi^{*}(\ell)}{\ell^s}.
\end{align*}

\noindent
The sum over $\ell$ equals to
\begin{align}\label{sumoverl}
%\prod_{\substack{p\nmid k\\p\nmid m^*}}\left(1-\frac{\chi^*(p)}{p^s}\right)
L(s,\chi^{*})\prod_{\substack{p\mid k\\p\nmid m^*}}\left(1-\frac{\chi^*(p)}{p^s}\right), 
\end{align}
and the sum over $\delta$ equals to
\begin{align}\label{sumoverdelta}
\prod_{\substack{p\nmid k\\p\nmid m^{*}}}\left(1+\frac{\chi^{*}(p)}{p^{s+r+1}}-\frac{\chi^*(p)}{p^{s+r}}\right).  
\end{align}
\noindent
Observe that, by utilising a decomposition argument analogous to that employed in the preceding lemma, the expression above may be equivalently simplified to
\begin{align*}
\frac{L(s+r+1,\chi^{*})}{L(s+r,\chi^{*})}\prod_{\substack{p|k\\p\nmid m^{*}}}(1+B_{p,r}(s,\chi^*))\left(1+\frac{\chi^{*}(p)}{p^{s+r+1}}-\frac{\chi^*(p)}{p^{s+r}}\right)^{-1},
\end{align*}
where 
\begin{align}\label{bprschidefinition}
B_{p,r}(s,\chi^*)
=-\frac{\left(\frac{{\chi^{*}}(p)^2}{p^{2(s+r)+1}}-\frac{{\chi^{*}}(p)^2}{p^{2(s+r+1)}}+\frac{{\chi^{*}}(p)^3}{p^{3(s+r)+1}}-\frac{{\chi^{*}}(p)^3}{p^{3(s+r+1)}}+\cdots\right)}{\left(1+\frac{\chi^{*}(p)}{p^{s+r+1}}-\frac{\chi^*(p)}{p^{s+r}}-\frac{{\chi^{*}}(p)^2}{p^{2(s+r)+1}}+\frac{{\chi^{*}}(p)^2}{p^{2(s+r+1)}}-\frac{{\chi^{*}}(p)^3}{p^{3(s+r)+1}}+\frac{{\chi^{*}}(p)^3}{p^{3(s+r+1)}}-\cdots\right)}.
\end{align}

\noindent
Note that the stipulation $p\nmid m^*$ in the Euler products mentioned above is superfluous, as $\chi^*(p)=0$ for every prime $p|m^*$. Set
\begin{align}\label{arschidefinition}
A_r(s,\chi^*)
=\prod_{p|k}(1+B_{p,r}(s,\chi^*))\left(1-\frac{\chi^*(p)}{p^s}\right)\left(1+\frac{\chi^{*}(p)}{p^{s+r+1}}-\frac{\chi^*(p)}{p^{s+r}}\right)^{-1}.
\end{align}

\noindent
Thus, the average over the characters is given by
\begin{align}\label{krschidefinition}
K_r(s,\chi^*)
\coloneqq\sum_{\chi^*(\bmod{m^*})}\frac{L(s,\chi^*)L(s+r+1,\chi^*)}{L(s+r,\chi^{*})}A_r(s,\chi^*).
\end{align}

\noindent
We begin by estimating the term $A_r(s,\chi^*)$ and establishing its region of convergence. Letting $s=\sigma+it$ with $\sigma,t\in\mathbb{R}$, and recalling~\eqref{bprschidefinition}, we have
\begin{align*}
|B_{p,r}(s,\chi^*)|
\leq\frac{1}{p^{2\sigma+2r+1}}\frac{1-\frac{1}{2^{\sigma+r+1}}}{1-\frac{1}{2^{\sigma+r}}}
<\frac{1}{2p^{2\sigma+2r+1}},
\end{align*}
and consequently,
\begin{align*}
\sum_{p|k}|B_{p,r}(s,\chi^*)|
<\frac{1}{2}\sum_{p|k}\frac{1}{p^{2\sigma+2r+1}}
<\infty.    
\end{align*}

\noindent
Next, consider the factors
\begin{align*}
\left|1-\frac{\chi^*(p)}{p^{s}}\right|
\leq \left|1+\frac{1}{p^{s}}\right|
=1+\frac{1}{p^{\sigma}}\quad \text{and}\quad &\left|1+\frac{\chi^{*}(p)}{p^{s+r+1}}-\frac{\chi^{*}(p)}{p^{s+r}}\right|
\geq 1-\frac{1}{2^{\sigma+r+1}}.%\quad \text{so that}\\
%&\left|1+\frac{\chi^{*}(p)}{p^{s+r+1}}-\frac{\chi^{*}(p)}{p^{s+r}}\right|^{-1}
%\leq \frac{1}{1-\frac{1}{2^{\sigma+r+1}}}.
\end{align*}

\noindent
It follows that
\begin{align}\label{arskchibound}
|A_r(s,\chi^*)|
\leq&\prod_{p|k}|1+B_{p,r}(s,\chi^*)|\left|1+\frac{1}{p^{s}}\right|\left|1+\frac{1}{p^{s+r+1}}-\frac{1}{p^{s+r}}\right|^{-1}\nonumber\\
\leq&\prod_{p|k}\left(1+\frac{1}{2p^{2\sigma+2r+1}}\right)\left(1+\frac{1}{p^{\sigma}}\right)\left(1-\frac{1}{2^{(\sigma+r+1)}}\right)^{-1}\nonumber\\
\leq&\left(1-\frac{1}{2^{(\sigma+r+1)}}\right)^{-1}\prod_{p|k}\left(1+\frac{1}{p^{\sigma}}+\frac{1}{2p^{2\sigma+2r+1}}+\frac{1}{2p^{3\sigma+2r+1}}\right).
\end{align}

\noindent
Therefore, $|A_r(s,\chi^*)|$ is uniformly bounded in the half-plane $\mathrm{Re}(s)>1$. Moreover, since
\begin{align*}
\frac{\zeta(2\sigma)}{\zeta(\sigma)}
\leq|L(s,\chi^*)|
\leq\zeta(\sigma)    
\end{align*}
for $\mathrm{Re}(s)=\sigma>1$ (see for instance~\cite[Lemma 2.1]{gregmartin}), we may bound the sum in~\eqref{krschidefinition} by
\begin{align*}
|K_r(s,\chi^*)|
\leq&\left|\sum_{\chi^*(\bmod{m^*})}\frac{L(s,\chi^*)L(s+r+1,\chi^*)}{L(s+r,\chi^*)}A_r(s,\chi^*)\right|\\
\leq&\varphi(m^*)\frac{\zeta(\sigma)\zeta(\sigma+r)\zeta(\sigma+r+1)}{\zeta(2(\sigma+r))}\prod_{p|k}\left(1+\frac{1}{p^{\sigma}}+\frac{1}{2p^{2\sigma+2r+1}}\right)\left(1-\frac{1}{2^{\sigma+r+1}}\right)^{-1}.
\end{align*}

\noindent
Ignoring the higher-order terms and inserting the expression above, we obtain
\begin{align*}
|D_2(s,r)|
\leq\frac{\zeta(\sigma)\zeta(\sigma+r)\zeta(\sigma+r+1)}{\zeta(2(\sigma+r))}\sum_{k\geq 1}\frac{\mu^2(k)}{k^{r+1}}\prod_{p|k}\left(1+\frac{1}{p^{\sigma}}+\frac{1}{2p^{2\sigma+2r+1}}\right)\sum_{\substack{m^*\geq 1\\(m^*,k)=1}}\frac{1}{{m^*}^r}.
\end{align*}

\noindent
The sum over $m^*$ equals to
\begin{align*}
\zeta(r)\prod_{p|k}\left(1-\frac{1}{p^{r}}\right).  
\end{align*}

\noindent
Hence, we deduce that
\begin{align*}
|D_2(s,r)|
\leq\frac{\zeta(\sigma)\zeta(\sigma+r)\zeta(\sigma+r+1)\zeta(r)}{\zeta(2(\sigma+r))}\prod_{p}\left(1+\frac{1-p^{-r}}{p^{r+1}}\left(1+\frac{1}{p^{\sigma}}+\frac{1}{2p^{2\sigma+2r+1}}\right)\right),
%=&\frac{\zeta(\sigma)\zeta(\sigma+r+1)}{\zeta(\sigma+r)}E_r(\sigma),
\end{align*}
and set
\begin{align*}
E_r(\sigma)
=\prod_{p}\left(1+\frac{1-p^{-r}}{p^{r+1}}\left(1+\frac{1}{p^{\sigma}}+\frac{1}{2p^{2\sigma+2r+1}}\right)\right).
\end{align*}

\noindent
Furthermore, the Euler product can be expressed in the form
\begin{align*}
E_r(\sigma)
=C'(r)\prod_{p}\frac{\left(1+\frac{1-p^{-r}}{p^{r+1}}(1+E_{p,r}(\sigma))\right)}{1+\frac{1-p^{-r}}{p^{r+1}}},
\end{align*}
where $C'(r)$ is defined in~\eqref{cprimerdefinition} and
\begin{align*}
E_{p,r}(\sigma)
=-\left(\frac{1}{p^{\sigma}}+\frac{1}{2p^{2\sigma+2r+1}}\right).
\end{align*}

\noindent
From our analysis of $B_{p,r}(s,\chi^*)$, it follows that $E_{p,r}(\sigma)$ converges absolutely for $\sigma>1$, thereby completing the proof of the lemma.
\end{proof}

%Note that the growth rates of $D_1(s,r)$ and $D_2(s,r)$ are nearly the same as the upper bound obtained in Lemma~\ref{gausssumlemma}. However, Figure~\ref{figure1} plots the exact values of both sums and illustrates that $D_2(s,r)$ decays more slowly than $D_1(s,r)$ for larger values of $r$ as $\mathrm{Re}(s)$ tends to infinity. 

\subsection{Special values of $D_2(s,r)$}\label{section31}

It is worth noting that the work of Ramar\'{e}~\cite{ramare} can be used to handle the Euler products over arithmetic progressions in~\eqref{sumoverl} and~\eqref{sumoverdelta} and for deriving an exact formula. However, the upper bound established for this Dirichlet series in the preceding lemma suffices for the purposes of this paper. Here, we determine the asymptotic behaviour of $D_2(s,r)$ in the case of a particular Dirichlet character $\chi_4$.

\medskip

Define
\begin{align}\label{chi4definition}
\chi_4(n)
\coloneqq\begin{cases}
1 & \text{if $n\equiv 1(\bmod{4})$}\\
-1 & \text{if $n\equiv 3(\bmod{4})$}\\
0 &  \text{if $2|n$}.
\end{cases}
\end{align}

\noindent
Note that the Dirichlet $L$-function $L(s,\chi_4)=\beta(s)$, the Dirichlet $\beta$-function, is given by
\begin{align*}
L(s,\chi_4)
=\sum_{n=1}^{\infty}\frac{\chi_4(n)}{n^s}
=\sum_{\ell=0}^{\infty}
\frac{(-1)^{\ell}}{(2\ell+1)^s}.
\end{align*}
\noindent
Since $m^*=4$ and $(m^*,k)=1$, it is evident from the proof of Lemma~\ref{gausssumlemma} that
\begin{align*}
D_2(s,r;\chi_4)
=\frac{L(s,\chi_4)L(s+r+1,\chi_4)}{L(s+r,\chi_4)}\sum_{k=1}^{\infty}\frac{\mu^2(k)}{k^{r+1}}A_r(s,\chi_4)
\end{align*}
\noindent
where 
\begin{align*}
A_r(s,\chi_4)
=&\prod_{\substack{p|k}}(1-B_{p,r}(s,\chi_4))\left(1+\frac{\chi_4(p)}{p^{s+r+1}}-\frac{\chi_4(p)}{p^{s+r}}\right)^{-1}\\
=&\prod_{\substack{p|k\\p\equiv 1(\bmod{4})}}(1-B_{p,r}(s))\left(1-\frac{1}{p^s}\right)\left(1+\frac{1}{p^{s+r+1}}-\frac{1}{p^{s+r}}\right)^{-1}\\
&\quad\quad\quad\quad\times\prod_{\substack{p|k\\p\equiv 3(\bmod{4})}}(1-B_{p,r}(s))\left(1+\frac{1}{p^s}\right)\left(1-\frac{1}{p^{s+r+1}}+\frac{1}{p^{s+r}}\right)^{-1},
\end{align*}
\noindent
where $B_{p,r}(s)$ is given in~\eqref{bprschidefinition}. Thus, $D_2(s,r;\chi_4)$ equals to
\begin{align*}
&\frac{L(s,\chi_4)L(s+r+1,\chi_4)}{L(s+r,\chi_4)}\prod_{p\equiv 1(\bmod{4})}\left(1+\frac{(1+p^{-s})}{p^{r+1}}\left(1+\frac{1}{p^{s+r+1}}-\frac{1}{p^{s+r}}\right)^{-1}(1-B_{p,r}(s))\right)\\
&\quad\quad\quad\quad\quad\quad\quad\quad\quad\times\prod_{p\equiv 3(\bmod{4})}\left(1+\frac{(1-p^{-s})}{p^{r+1}}\left(1-\frac{1}{p^{s+r+1}}+\frac{1}{p^{s+r}}\right)^{-1}(1-B_{p,r}(s))\right)\\
=&\frac{\beta(s)\beta(s+r+1)}{\beta(s+r)}\prod_{p\equiv 1(\bmod{4})}\left(1+\frac{(1+p^{-s})}{p^{r+1}}\left(1+\frac{1}{p^{s+r+1}}-\frac{1}{p^{s+r}}\right)^{-1}(1-B_{p,r}(s))\right)\\
&\quad\quad\quad\quad\quad\quad\quad\quad\quad\times\prod_{p\equiv 3(\bmod{4})}\left(1+\frac{(1-p^{-s})}{p^{r+1}}\left(1-\frac{1}{p^{s+r+1}}+\frac{1}{p^{s+r}}\right)^{-1}(1-B_{p,r}(s))\right),
\end{align*}
where $\beta(s)$ denotes the Dirichlet $\beta$ function and we can obtain a decomposed product as before. Note that for $\mathrm{Re}(s)=1$ the above Euler product over arithmetic progression reduced to specific constants.

%As illustrated in Figure~\ref{figure1}, the growth rate of $D_2(s,r)$ is slower than that of $D_1(s,r)$. 

%From the example of $D_2(s,r;\chi_4)$, it is evident that this discrepancy arises due to cancellations occurring in both $D_2(s,r;\chi_4)$ as well as in $D_2(s,r)$, which are induced by the Dirichlet character. Since Lemma~\ref{gausssumlemma} considers only the average order of $D_2(s,r)$, it yields a growth rate comparable to that of $D_1(s,r)$. While an improvement in this estimate is certainly possible, addressing the underlying challenges remains a difficult problem.

\begin{lemma}\label{shfiteddirichletseries}
For a fixed integer $r>1$, the Dirichlet series
\begin{align*}
A_{\sigma_r}(s)
=\sum_{n=1}^{\infty}\frac{\sigma_r(n+1)}{n^s}
=&\zeta(r+1)\sum_{i=0}^{r}\binom{r}{i}D_1(s-i,r)+\zeta(r+1)\sum_{i=0}^{r}\binom{r}{i}D_2(s-i,r),
%=&\zeta(r+1)\sum_{i=0}^{r}\frac{\zeta(s-i)}{\zeta(s+r+1-i)}K_r(s-i)+\zeta(r+1)\sum_{i=0}^{r}
\end{align*}
converges for $\mathrm{Re}(s)>1$, where $D_1(s,r)$ and $D_2(s,r)$ are defined in~\eqref{dskdefinition} and~\eqref{d2sksum}, respectively.
\end{lemma}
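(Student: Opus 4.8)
The plan is to start from the Ramanujan-sum identity~\eqref{kthdivisoridentity} for $\sigma_r$, shift the argument by one, and then expand the resulting factor $(n+1)^r$ by the binomial theorem so as to produce Dirichlet series whose coefficients are Ramanujan sums $c_m(n)$ with a shift. First I would write, using~\eqref{kthdivisoridentity} with $n$ replaced by $n+1$,
\begin{align*}
\sigma_r(n+1)
=\zeta(r+1)(n+1)^r\sum_{m=1}^{\infty}\frac{c_m(n+1)}{m^{r+1}},
\end{align*}
and then substitute the shift formula of Lemma~\ref{shiftedramanujansumformula}, namely $c_m(n+1)=\frac{\mu(m)}{\varphi(m)}c_m(n)+\frac{1}{\varphi(m)}\sum_{\chi\ne\chi_0}\tau(\chi)c'_{\bar\chi}(n)$. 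This splits $\sigma_r(n+1)$ into a ``principal'' piece carrying $\mu(m)c_m(n)/\varphi(m)$ and a ``non-principal'' piece carrying the character twists $\tau(\chi)c'_{\bar\chi}(n)/\varphi(m)$.

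Next I would form the Dirichlet series $A_{\sigma_r}(s)=\sum_{n\ge1}\sigma_r(n+1)n^{-s}$ and interchange the order of summation (justified for $\mathrm{Re}(s)$ large by absolute convergence, since $\sigma_r(n+1)\ll n^{r+\varepsilon}$). The key manipulation is the binomial expansion $(n+1)^r=\sum_{i=0}^{r}\binom{r}{i}n^{i}$, which turns $\sum_n (n+1)^r c_m(n) n^{-s}$ into $\sum_{i=0}^r\binom{r}{i}\sum_n c_m(n)n^{-(s-i)}$, i.e. exactly the inner sum defining $D_1(s-i,r)$ once the $m$-sum $\sum_m \mu(m)\varphi(m)^{-1}m^{-(r+1)}(\cdots)$ is reinstated. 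The non-principal piece is handled identically and matches the definition~\eqref{d2sksum} of $D_2(s-i,r)$, with the triple sum over $m$, $\chi\ne\chi_0$, and $n$ and the Gauss-sum weight $\tau(\chi)=c_\chi(1)$. Collecting the overall factor $\zeta(r+1)$ yields precisely
\begin{align*}
A_{\sigma_r}(s)
=\zeta(r+1)\sum_{i=0}^{r}\binom{r}{i}D_1(s-i,r)
+\zeta(r+1)\sum_{i=0}^{r}\binom{r}{i}D_2(s-i,r).
\end{align*}

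Finally, convergence: by Proposition~\ref{arsconvergence}, each $D_1(s-i,r)$ is analytic for $\mathrm{Re}(s-i)>1$, i.e. $\mathrm{Re}(s)>1+i$, the worst case being $i=r$, so one needs $\mathrm{Re}(s)>r+1$ for that representation; however the original series $\sum_n\sigma_r(n+1)n^{-s}$ converges absolutely already for $\mathrm{Re}(s)>r+1$ and, more to the point, the left-hand side is being asserted only as a convergent Dirichlet series for $\mathrm{Re}(s)>1$ after the cancellation of the growth — here I would note that the $n^r$ growth is compensated by the $m^{-(r+1)}$ decay and the shift $s\mapsto s-i$ is absorbed into the region of analyticity of $D_1,D_2$ guaranteed by Proposition~\ref{arsconvergence} (which gives continuation to $\mathrm{Re}(s)>-r$) and Lemma~\ref{gausssumlemma} (which bounds $D_2$ for $\mathrm{Re}(s)>1$); in particular each $D_1(s-i,r)$ and $D_2(s-i,r)$ makes sense for $\mathrm{Re}(s)>1$ since $\mathrm{Re}(s-i)>1-r>-r$. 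The main obstacle I anticipate is the bookkeeping in the interchange of summations and the verification that the binomial-shifted inner sums line up exactly with the \emph{definitions} of $D_1$ and $D_2$ rather than with some rearranged variant — in particular keeping track of which Euler products in Lemmas~\ref{dsklemma} and~\ref{gausssumlemma} are being invoked at the shifted argument $s-i$, and checking that no pole of $D_1(s-i,r)$ at $s-i=1$ (i.e. $s=1+i$) obstructs the claimed region $\mathrm{Re}(s)>1$; since all these potential poles lie at $\mathrm{Re}(s)\ge2$, the stated half-plane $\mathrm{Re}(s)>1$ is where one must instead appeal to the analytic continuation of each summand past its natural abscissa.
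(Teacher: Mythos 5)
Your proposal follows essentially the same route as the paper: invoke the identity~\eqref{kthdivisoridentity} at $n+1$, interchange the sums, expand $(n+1)^r$ binomially, and split $c_m(n+1)$ via Lemma~\ref{shiftedramanujansumformula} so that the two pieces match the definitions of $D_1(s-i,r)$ and $D_2(s-i,r)$, with Lemmas~\ref{dsklemma} and~\ref{gausssumlemma} supplying the analytic input (the paper merely performs the binomial expansion before applying the shift formula, which is immaterial). Your closing remark that the representation converges absolutely only for $\mathrm{Re}(s)>r+1$ and that the stated half-plane $\mathrm{Re}(s)>1$ rests on the continuation of the summands is in fact a more careful reading of the convergence issue than the paper's own brief justification.
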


\begin{proof}
Employing the identity~\eqref{kthdivisoridentity}, we express
\begin{align*}
A_{\sigma_r}(s)
=\sum_{n=1}^{\infty}\frac{\sigma_r(n+1)}{n^s}
=\zeta(r+1)\sum_{n=1}^{\infty}\frac{(n+1)^{r}}{n^s}\sum_{m=1}^{\infty}\frac{c_m(n+1)}{m^{r+1}}.
\end{align*}
\noindent
Given that $r\geq 1$ is fixed, and under the conditions $\mathrm{Re}(s)>1$, and $\mathrm{Re}(s)+r>1$, one may interchange the order of summation, thereby obtaining
\begin{align}\label{sum1}
\zeta(r+1)\sum_{m=1}^{\infty}\frac{1}{m^{r+1}}\sum_{n=1}^{\infty}\frac{c_m(n+1)}{n^s}(n+1)^r
=\zeta(r+1)\sum_{i=0}^{r}\binom{r}{i}\sum_{m=1}^{\infty}\frac{1}{m^{r+1}}\sum_{n=1}^{\infty}\frac{c_m(n+1)}{n^{s-i}}.
\end{align}

Now applying Lemma~\ref{shiftedramanujansumformula} yields,
\begin{align*}
\sum_{m=1}^{\infty}\frac{1}{m^{r+1}}\sum_{n=1}^{\infty}\frac{c_m(n+1)}{n^{s-i}}
=&\sum_{m=1}^{\infty}\frac{1}{\varphi(m)m^{r+1}}\sum_{n=1}^{\infty}\frac{1}{n^{s-i}}\left(\mu(m)c_m(n)+\sum_{\substack{\chi(\bmod{m})\\ \chi\ne \chi_0}}c_{\chi}(1)c'_{\bar{\chi}}(n)\right)\\
=&\sum_{m=1}^{\infty}\frac{\mu(m)}{\varphi(m)m^{r+1}}\sum_{n=1}^{\infty}\frac{c_m(n)}{n^{s-i}}+\sum_{m=1}^{\infty}\frac{1}{\varphi(m)m^{r+1}}\sum_{\substack{\chi(\bmod{m})\\ \chi\ne \chi_0}}\tau(\chi)\sum_{n=1}^{\infty}\frac{c'_{\bar{\chi}}(n)}{n^{s-i}}\\
=&D_1(s-i,r)+D_2(s-i,r).
\end{align*}

\noindent
By applying Lemma~\ref{dsklemma} and Lemma~\ref{gausssumlemma}, in conjunction with~\eqref{sum1}, we thereby complete the proof.
\end{proof}

We conclude this section by recalling that (see, for instance,~\cite[\S1.3]{titchmarsh}), for $\mathrm{Re}(s)>1$ or $\mathrm{Re}(s)>r+1$,
\begin{align}\label{dsigmars}
D_{\sigma_r}(s)
=\sum_{n=1}^{\infty}\frac{\sigma_r(s)}{n^s}
=\zeta(s)\zeta(s-r),
\end{align}
for fixed $r$.

\section{The Integral Asymptotics}

As outlined in Section~\ref{initialsetup}, we have two main steps to establish the central limit theorem. First, to employ the saddle-point method, we analyse the asymptotic behaviour of the integrands

\begin{align}\label{maintermintegral}
\Omega_{1,f}(n)
\coloneqq\frac{e^{n\tau}}{2\pi}\int_{|\theta|\leq \theta_n}\exp(in\theta+F(\tau+i\theta,u))d\theta,
\end{align}
and
\begin{align}\label{errortermintegral}
\Omega_{2,f}(n)
\coloneqq\frac{e^{n\tau}}{2\pi}\int_{\theta_n<|\theta|\leq \pi}\exp(in\theta+F(\tau+i\theta,u))d\theta,
\end{align}
with a suitable choice of $\tau=\tau(n,u)$ such that the first-order derivative in~\eqref{taylorapproximation} vanishes.

\subsection{Estimate of $\Omega_{1,f}$}\label{EstimateofOmega1f}
%We study the asymptotic behaviour of $\Omega_{1,f}(n)$ to solve for the saddle points by utilizing the Taylor expansion provided in equation~\eqref{taylorapproximation}. 
From the definition of $F(\gamma,u)$,  it is evident that deriving an asymptotic expression for the integrand $\Omega_{1,f}$ necessitates a comprehensive understanding of the analytic properties of the Dirichlet series with shifted coefficient $A_f(s)$ and $D_f(s)$ corresponding to $f(n)=\sigma_r(n)$.

\begin{lemma}\label{maintermforsigmar}
Let $f$ denote the $r$-th divisor function $\sigma_r(n)$ for fixed $r\geq 2$, and let $B^2=F_{\gamma\gamma}(\tau,u)$. We then have
\begin{align*}
\Omega_{1,f}
=\frac{e^{n\tau+F(\tau, u)}}{2\pi}\left(\int_{-\theta_n}^{\theta_n}\exp\left(-\frac{B^2}{2}\theta^2\right)d\theta\right)\left(1+O\left(\tau^{2r/7-1}\right)\right),
\end{align*}
where
\begin{align*}
\int_{-\theta_n}^{\theta_n}\exp\left(-\frac{B^2}{2}\theta^2\right)d\theta 
=\frac{\sqrt{2\pi}}{B}+O\left(\tau^{-1-3r/7}B^{-2}\exp\left(-\frac{\tau^{-r/7-1}}{2}\right)\right)
\end{align*}
uniformly in $u$. 
\end{lemma}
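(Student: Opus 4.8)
The plan is to obtain the asymptotic expansion of $\Omega_{1,f}$ in two stages: first by reducing the $\theta$-integral to a Gaussian via the Taylor expansion of $F(\tau+i\theta,u)$ around $\theta=0$, using the choice of $\tau$ that kills the linear term; and second by bounding the cubic (and higher) remainder and the Gaussian tail. Throughout, the quantitative input is the analytic behaviour of $A_{\sigma_r}(s)$ and $D_{\sigma_r}(s)$ established in Section~3 (Lemma~\ref{shfiteddirichletseries} and~\eqref{dsigmars}), which via the Mellin/converse-mapping machinery of Section~\ref{initialsetup} gives, uniformly in $u$, the sizes of the derivatives $F_\gamma(\tau,u)$, $F_{\gamma\gamma}(\tau,u)=B^2$ and $\sup_\theta|F_{\gamma\gamma\gamma}(\tau+i\theta,u)|$ as $\tau\to 0^+$. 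The dominant singularity of $\mathcal D_f(\Delta;s)=A_{\sigma_r}(s)-D_{\sigma_r}(s)$ sits at $\mathrm{Re}(s)=r+1$ (from $D_{\sigma_r}(s)=\zeta(s)\zeta(s-r)$ and the shifted series), so that $F(\tau,u)\asymp \tau^{-(r+1)}$, $B^2=F_{\gamma\gamma}(\tau,u)\asymp \tau^{-(r+3)}$, and $F_{\gamma\gamma\gamma}(\tau+i\theta,u)=O(\tau^{-(r+4)})$; with $n$ determined by the saddle-point equation $n=-F_\gamma(\tau,u)\asymp\tau^{-(r+2)}$, one has $\tau\asymp n^{-1/(r+2)}$.

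First I would write, by the third-order Taylor expansion~\eqref{taylorapproximation} applied to $\Phi(\tau+i\theta)=in\theta+F(\tau+i\theta,u)$ and the saddle-point choice $in+F_\gamma(\tau,u)=0$ (equivalently $\Phi'(\tau)=0$ after the standard identification $\Phi'(\tau)=n+\frac{1}{i}\,\partial_\theta|_{\theta=0}$),
\begin{align*}
\Omega_{1,f}
=\frac{e^{n\tau+F(\tau,u)}}{2\pi}\int_{|\theta|\leq\theta_n}\exp\left(-\frac{B^2}{2}\theta^2\right)\left(1+O\left(\sup_{|\theta|\le\theta_n}\left|\theta^3 F_{\gamma\gamma\gamma}(\tau+i\theta,u)\right|\right)\right)d\theta.
\end{align*}
The error factor is controlled by choosing $\theta_n=\tau^{1+r/7}$ (the exponent $r/7$ is dictated by the decay rate $r>1$ in the converse-mapping hypothesis~\eqref{cmcondition1}, exactly as in~\cite[\S3]{lipniketal}): then $\theta_n^3\sup|F_{\gamma\gamma\gamma}|=O(\tau^{3+3r/7}\cdot\tau^{-(r+4)})=O(\tau^{2r/7-1})$, which is $o(1)$ since $r>7/2$ — and here I would remark that this is precisely why $r\ge 2$ alone is not used but the method needs $r$ large enough, or alternatively why the $r/7$ split is calibrated to the available decay; pulling this uniform error out of the integral gives the factor $\bigl(1+O(\tau^{2r/7-1})\bigr)$.

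Second I would handle the Gaussian integral itself: extending $\int_{-\theta_n}^{\theta_n}e^{-B^2\theta^2/2}\,d\theta$ to the whole line produces the main term $\sqrt{2\pi}/B$ plus a tail error $2\int_{\theta_n}^\infty e^{-B^2\theta^2/2}\,d\theta$. Using the standard bound $\int_{a}^\infty e^{-c t^2/2}dt\le \frac{1}{ca}e^{-ca^2/2}$ with $c=B^2$ and $a=\theta_n=\tau^{1+r/7}$, and noting $B^2\theta_n^2\asymp \tau^{-(r+3)}\tau^{2+2r/7}=\tau^{2r/7-r-1}=\tau^{-(r/7\cdot? )}$; more carefully $B^2\theta_n^2/2\asymp \tau^{-(r+1-2r/7)}$, and the critical quantity that appears in the statement is $\tau^{-r/7-1}$, so I would track the constants to land on the stated bound
\begin{align*}
\int_{-\theta_n}^{\theta_n}\exp\left(-\frac{B^2}{2}\theta^2\right)d\theta
=\frac{\sqrt{2\pi}}{B}+O\left(\tau^{-1-3r/7}B^{-2}\exp\left(-\frac{\tau^{-r/7-1}}{2}\right)\right),
\end{align*}
where the prefactor $\tau^{-1-3r/7}B^{-2}$ comes from $\frac{1}{B^2\theta_n}=\tau^{-(1+r/7)}B^{-2}$ together with absorbing lower-order powers, and the exponential rate $\tau^{-r/7-1}$ is (a lower bound for) $B^2\theta_n^2/2$. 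Uniformity in $u$ for $\delta\le u\le\delta^{-1}$ is inherited at every step because the converse-mapping estimates for $F$ and its derivatives, going back through $\mathrm{Li}_{s+1}(-u)\Gamma(s)$ and $\mathcal D_f(\Delta;s)$, are uniform on that range.

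The main obstacle is not any single estimate but the bookkeeping of exponents: one must verify that the meromorphic continuation and polynomial-decay bounds for $A_{\sigma_r}(s)$ (hence for $\mathcal D_f(\Delta;s)$) coming from Lemma~\ref{shfiteddirichletseries}, Lemma~\ref{dsklemma} and Lemma~\ref{gausssumlemma} are strong enough — with the correct value of the decay exponent $r'$ in~\eqref{cmcondition1}, which must exceed $1$ — to justify differentiating $F$ three times under the Mellin inversion and to make the choice $\theta_n=\tau^{1+r/7}$ simultaneously small enough that the Taylor remainder is negligible and large enough that the Gaussian tail is super-polynomially small. The delicate point specific to our setting (absent in~\cite{lipniketal}) is that the shifted Dirichlet series $A_{\sigma_r}(s)$ carries, besides the main pole at $s=r+1$, the secondary poles at $s=r+1-i$ for $0\le i\le r$ coming from the binomial expansion of $(n+1)^r$ and the pole of $D_1(s-i,r)$ at $s-i=1$; these contribute genuinely to $F(\tau,u)$ and its derivatives as lower-order powers of $\tau^{-1}$, so I must check they do not disturb the leading asymptotics $B^2\asymp\tau^{-(r+3)}$ nor the error analysis — they are harmless precisely because each is smaller by an integer power of $\tau$, but this needs to be said explicitly.
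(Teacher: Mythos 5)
Your strategy coincides with the paper's: Taylor-expand $in\theta+F(\tau+i\theta,u)$ to third order, remove the linear term via the saddle-point equation $n=-F_{\gamma}(\tau,u)$, feed in the Mellin/converse-mapping estimates for $F_{\gamma\gamma}$ and $F_{\gamma\gamma\gamma}$ (which, since $D_2(s,r)$ is only bounded from above, are $O$-bounds with constant $N(r)$, not genuine asymptotics $\asymp$ as you assert), and finish by completing the truncated integral to a Gaussian. However, your exponent bookkeeping --- which you yourself identify as the main obstacle --- fails at exactly the point where the lemma's error terms are produced. You choose $\theta_n=\tau^{1+r/7}$; then $\theta_n^3\sup_\theta|F_{\gamma\gamma\gamma}(\tau+i\theta,u)|=\tau^{3+3r/7}\cdot O(\tau^{-(r+4)})=O(\tau^{-1-4r/7})$, which diverges as $\tau\to0^{+}$ and is not $O(\tau^{2r/7-1})$ as you claim (the exponent is $3+3r/7-(r+4)=-1-4r/7$). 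Your own tail computation then yields $B^2\theta_n^2\asymp\tau^{-1-5r/7}$ and a prefactor $\tau^{-1-r/7}B^{-2}$, which do not match the stated $\tau^{-r/7-1}$ and $\tau^{-1-3r/7}B^{-2}$; you leave this unresolved (``track the constants''). The correct calibration, and the one the paper uses, is $\theta_n=\tau^{1+3r/7}$: then $\theta_n^3\cdot\tau^{-(r+4)}=\tau^{2r/7-1}$, $B^2\theta_n^2\asymp\tau^{-(r+3)}\tau^{2+6r/7}=\tau^{-r/7-1}$, and $1/(B^2\theta_n)=\tau^{-1-3r/7}B^{-2}$, which reproduces the stated remainder exactly.

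A second gap: you invoke $\sup_\theta|F_{\gamma\gamma\gamma}(\tau+i\theta,u)|=O(\tau^{-(r+4)})$ as if it followed from the converse-mapping machinery, but that machinery controls derivatives only at the real argument $\gamma=\tau$. For $\theta\neq0$ the paper runs a separate argument (adapted from~\cite{madwag}): split the sum defining $\partial_\theta^3F(\tau+i\theta,u)$ at $k_0=\tau^{-(1+c)}$, bound the large-$k$ range using $\sigma_r(n)<n^r\zeta(r)$, and bound the small-$k$ range using $|1+u^{-1}e^{k(\tau+i\theta)}|\geq(1+u^{-1}e^{k\tau})\cos(k\theta/2)$, which gives $O(\tau^{-r-4}+\tau^{-r-6}\theta^2)$ and hence $O(\tau^{-r-4})$ precisely on the range $|\theta|\leq\tau^{1+3r/7}$. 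This step is what legitimises the uniform cubic remainder and is absent from your proposal. (Your side remark that the factor $1+O(\tau^{2r/7-1})$ is only $1+o(1)$ when $r>7/2$ is a fair observation about the statement as written for $r=2,3$, but it does not repair the two issues above.)
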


\begin{proof}
Note that
\begin{align}\label{saddlepointequationforsigmak}
F(\tau+i\theta,u)
=F(\tau,u)+F_{\gamma}(\tau, u)i\theta-\frac{F_{\gamma\gamma}(\tau,u)}{2}\theta^2+O\left(\theta^3\sup_{0\leq \theta_0\leq \theta}|F_{\gamma\gamma\gamma}(\tau+i\theta_0,u)|\right)
\end{align}

\noindent
Utilising the definition of $\Delta_f(n)$ (as given in~\eqref{gfunctiondefintion}), we subsequently deduce that

\begin{align}\label{curlyfgammau}
F(\gamma,u)
=&\sum_{n=1}^{\infty}\Delta_f(n)\log(1+ue^{-n\gamma})
=-\sum_{n=1}^{\infty}\Delta_f(n)\sum_{\ell=1}^{\infty}\frac{(-u)^{\ell}}{\ell}e^{-n\ell\gamma}\nonumber\\
=&-\sum_{n=1}^{\infty}(\sigma_r(n+1)-\sigma_r(n))\sum_{\ell=1}^{\infty}\frac{(-u)^{\ell}}{\ell}e^{-n\ell\gamma}\nonumber\\
=&-\sum_{n=1}^{\infty}\sigma_r(n+1)\sum_{\ell=1}^{\infty}\frac{(-u)^{\ell}}{\ell}e^{-n\ell\gamma}
+\sum_{n=1}^{\infty}\sigma_r(n)\sum_{\ell=1}^{\infty}\frac{(-u)^{\ell}}{\ell}e^{-n\ell\gamma}\nonumber\\
%&\quad\quad\quad\quad\quad\quad\quad\quad\quad\quad\quad\quad\quad\quad-\sum_{n=1}^{\infty}\sigma_k(n+1)\sum_{\ell=1}^{\infty}\frac{(-u)^{\ell}}{\ell}e^{-n\ell\gamma}\nonumber\\
=&-\mathcal{F}_{1}(\gamma,u)+\mathcal{F}_{2}(\gamma,u).
%-\mathcal{F}_{3}(\gamma,u).
\end{align}

\medskip

We begin with the case $\mathcal{F}_{2}(\gamma,u)$ owing to its relative simplicity. For $j\geq 0$, we have
\begin{align}\label{curlyf2gammau}
\frac{\partial^j}{\partial\gamma^j}\mathcal{F}_{2}(\gamma,u)
=(-1)^j\sum_{n=1}^{\infty}\sigma_r(n)n^{j}\sum_{\ell=1}^{\infty}(-u)^{\ell}\ell^{j-1}e^{-n\gamma\ell}.
\end{align}
\noindent
Observe that all the infinite sums above take the form
\begin{align*}
\mathfrak{h}_{2,j}(\gamma,u)
=\sum_{n=1}^{\infty}\sigma_r(n)n^{j}\sum_{\ell=1}^{\infty}(-u)^{\ell}\ell^{j-1}e^{-n\gamma\ell}.
\end{align*}
\noindent
Let $\mathcal{H}_{2,j}(s,u)$ denote the Mellin transform of $\mathfrak{h}_{2,j}(\gamma,u)$ with respect to $\gamma$. Therefore, by~\eqref{dsigmars}
\begin{align*}
\mathcal{H}_{2,j}(s,u)
=&\sum_{n=1}^{\infty}\sigma_r(n)n^{j}\sum_{\ell=1}^{\infty}(-u)^{\ell}\ell^{j-1}\int_{0}^{\infty}e^{-n\ell\gamma}\gamma^{s-1}d\gamma\\
=&\sum_{n=1}^{\infty}\sigma_r(n)n^{j}\sum_{\ell=1}^{\infty}(-u)^{\ell}\ell^{j-1}\Gamma(s)(n\ell)^{-s}\\
=&\sum_{n=1}^{\infty}\frac{\sigma_r(n)}{n^{s-j}}\sum_{\ell=1}^{\infty}\frac{(-u)^{\ell}}{\ell^{s-j+1}}\Gamma(s)
=\zeta(s-j)\zeta(s-j-r)\mathrm{Li}_{s-j+1}(-u)\Gamma(s).
\end{align*}
\noindent
Note that $\mathcal{H}_{2,j}(s,u)$ converges for $\mathrm{Re}(s)>r+j+2$ and $\mathrm{Re}(s)>j+2$, with its double poles located at the intervals $\frac{3}{2}+j\leq \mathrm{Re}(s)\leq j+2$ and $\frac{3}{2}+r+j\leq\mathrm{Re}(s)\leq r+j+2$ for fixed $r\geq 2$. These poles occur at $s=j+1$ and $s=j+r+1$. Consequently, it is necessary to invoke Theorem~\ref{cmtheorem} twice. Thus,
\begin{align*}
\mathfrak{h}_{2,j}(\gamma,u)
=\zeta(r+1)\mathrm{Li}_{r+2}(-u)\Gamma(j+r+1)\gamma^{-(j+r+1)}&+\zeta(1-r)\mathrm{Li}_{2}(-u)\Gamma(j+1)\gamma^{-(j+1)}\\
&\quad\quad\quad\quad+O(\gamma^{-(j+3/2)}).
\end{align*}

\noindent
Thus, inserting the above expression in~\eqref{curlyf2gammau}, we arrive at
\begin{align}\label{f2derivativefinal}
\frac{\partial^j}{\partial\gamma^j}\mathcal{F}_{2}(\gamma,u)
=(-1)^j\mathfrak{h}_{2,j}(\gamma,u)
=&(-1)^j\zeta(r+1)\mathrm{Li}_{r+2}(-u)\Gamma(j+r+1)\gamma^{-(j+r+1)}\nonumber\\
&+(-1)^j\zeta(1-r)\mathrm{Li}_{2}(-u)\Gamma(j+1)\gamma^{-(j+1)}+O(\gamma^{-(j+r+3/2)})\nonumber\\
=&(-1)^{j}\zeta(r+1)\mathrm{Li}_{r+2}(-u)\Gamma(j+r+1)\gamma^{-(j+r+1)}+O(\gamma^{-(j+r+3/2)}).
\end{align}

\noindent
We now proceed with the computation of $\mathcal{F}_{1}(\gamma,u)$. For $j\geq 0$, we write
\begin{align}\label{f1derivative}
\frac{\partial^j}{\partial\gamma^j}\mathcal{F}_{1}(\gamma,u)
=(-1)^j\sum_{n=1}^{\infty}\sigma_r(n+1)n^{j}\sum_{\ell=1}^{\infty}(-u)^{\ell}\ell^{j-1}e^{-n\ell\gamma}.
\end{align}

\noindent
Similarly, as before, all the infinite sums above take the form
\begin{align*}
\mathfrak{h}_{1,j}(\gamma,u)
=\sum_{n=1}^{\infty}\sigma_r(n+1)n^{j}\sum_{\ell=1}^{\infty}(-u)^{\ell}\ell^{j-1}e^{-n\ell\gamma}.
\end{align*}

\noindent
Let $\mathcal{H}_{1,j}(s,u)$ denote the Mellin transform of $\mathfrak{h}_{1,j}(\gamma,u)$ with respect to $\gamma$. Thus, we have
\begin{align*}
\mathcal{H}_{1,j}(s,u)
=&\sum_{n=1}^{\infty}\sigma_r(n+1)n^j\sum_{\ell=1}^{\infty}(-u)^{\ell}\ell^{j-1}\int_{0}^{\infty}e^{-\ell\gamma}\gamma^{s-1}d\gamma\nonumber\\
=&\sum_{n=1}^{\infty}\sigma_r(n+1)n^j\sum_{\ell=1}^{\infty}(-u)^{\ell}\ell^{j-1}\Gamma(s)(n\ell)^{-s}\nonumber\\
=&\sum_{n=1}^{\infty}\frac{\sigma_r(n+1)}{n^{s-j}}\sum_{\ell=1}^{\infty}\frac{(-u)^{\ell}}{\ell^{s-j+1}}\Gamma(s).
\end{align*}

\noindent
Applying Lemma~\ref{shfiteddirichletseries}, the aforementioned expression becomes
\begin{align}\label{h1jmellintransform}
\mathcal{H}_{1,j}(s,u)
=&\zeta(r+1)\left(\sum_{i=0}^{r}\binom{r}{i}D_1(s-i-j,r)+\sum_{i=0}^{r}\binom{r}{i}D_2(s-i-j,r)\right)\mathrm{Li}_{s-j+1}(-u)\Gamma(s).
%=&\widetilde{\mathfrak{h}}_{1,j}(\gamma,u)+\doublewidetilde{\mathfrak{h}}_{1,j}.
\end{align}

\noindent
Recalling Lemma~\ref{dsklemma}, the first term in the above expression yields
\begin{align*}
\widetilde{\mathcal{H}}_{1,j}(s,u)
=&\zeta(r+1)\sum_{i=0}^{r}\binom{r}{i}D_1(s-i-j,r)\mathrm{Li}_{s-j+1}(-u)\Gamma(s)\\
=&\zeta(r+1)\sum_{i=0}^{r}\binom{r}{i}\frac{\zeta(s-i-j)}{\zeta(s-i-j+r+1)}K_{r}(s-i-j)\mathrm{Li}_{s-j+1}(-u)\Gamma(s),
\end{align*}
where $K_r(s)$ is defined in~\eqref{arsdefinition}. Notably, $K_r(s)$ is absolutely convergent for $\mathrm{Re}(s)-i-j+r>0$. The only singularity arises from the zeta function at $s=1+i+j$. By applying Theorem~\ref{cmtheorem}, we obtain
\begin{align*}
\widetilde{\mathfrak{h}}_{1,j}(\gamma,u)
=&\frac{\zeta(r+1)K_r(1)}{\zeta(r+2)}\sum_{i=0}^{r}\binom{r}{i}\gamma^{-(i+j+1)}\mathrm{Li}_{i+2}(-u)\Gamma(i+j+1)+O(\gamma^{-r-j-3/2}).
\end{align*}

\noindent
By Lemma~\ref{gausssumlemma}, we obtain an upper bound for the second term in~\eqref{h1jmellintransform} as follows
\begin{align*}
|\doublewidetilde{\mathcal{H}}_{1,j}(s,u)|
=&\zeta(r+1)\left|\sum_{i=0}^{r}\binom{r}{i}D_2(s-i-j,r)\right|
\leq\zeta(r+1)\sum_{i=0}^{r}\binom{r}{i}|D_2(s-i-j,r)|\\
\leq&\zeta(r+1)\zeta(r)\sum_{i=0}^{r}\binom{r}{i}\frac{\zeta(\sigma-i-j)\zeta(\sigma-i-j+r)\zeta(\sigma-i-j+r+1)}{\zeta(2(\sigma-i-j+r))}\\
&\quad\quad\quad\quad\quad\quad\quad\quad\quad\quad\quad\quad\quad\quad\quad\quad\quad\quad E_r(\sigma-i-j)\mathrm{Li}_{s-j+1}(-u)\Gamma(s),
\end{align*}
where $\mathrm{Re}(s)=\sigma$ and $E_r(\sigma)$ is defined in~\eqref{erdefinition}. However, triple poles arise from the zeta functions at $\sigma=i+j+1$, $\sigma=i+j+1-r$, and $\sigma=i+j-r$. By invoking Theorem~\ref{cmtheorem}, we express this as
\begin{align*}
|\doublewidetilde{\mathfrak{h}}_{1,j}(\gamma,u)|
\leq&\zeta(r+1)\zeta(r)\sum_{i=0}^{r}\binom{r}{i}\frac{\zeta(r+1)\zeta(r+2)}{\zeta(2(r+1))}E_r(1)\mathrm{Li}_{i+2}(-u)\Gamma(i+j+1)\gamma^{-(i+j+1)}\\
&+\zeta(r+1)\zeta(r)\sum_{i=0}^{r}\binom{r}{i}\zeta(1-r)E_r(1-r)\mathrm{Li}_{i+2-r}(-u)\Gamma(i+j+1-r)\gamma^{-(i+j+1-r)}\\
&+\zeta(r+1)\zeta(r)\sum_{i=0}^{r}\binom{r}{i}{\zeta(-r)E_r(-r)}\mathrm{Li}_{i+1-r}(-u)\Gamma(i+j-r)\gamma^{-i-j+r}+O(\gamma^{-r-j-3/2})\\
=&\frac{\zeta^2(r+1)\zeta(r+2)\zeta(r)}{\zeta(2(r+1))}E_r(1)\sum_{i=0}^{r}\binom{r}{i}\mathrm{Li}_{i+2}(-u)\Gamma(i+j+1)\gamma^{-(i+j+1)}+O(\gamma^{-j-r-3/2}).
\end{align*}

\footnotetext{Note that due to the convergence region of $E_r(\sigma)$, $E_r(-r)$ and $E_r(1-r)$ are constants.}
\noindent
Note that the average orders of $\widetilde{\mathfrak{h}}_{1,j}(\gamma,u)$ and $\doublewidetilde{\mathfrak{h}}_{1,j}(\gamma,u)$ are of the same size, as established in Lemma~\ref{dsklemma} and Lemma~\ref{gausssumlemma}. Substituting these expressions into~\eqref{f1derivative}, we obtain
\begin{align}\label{f1derivativefinal}
&\frac{\partial^j}{\partial\gamma^j}\mathcal{F}_1(\gamma,u)
=(-1)^{j}\mathfrak{h}_{1,j}(\gamma,u)
=(-1)^{j}\;\widetilde{\mathfrak{h}}_{1,j}(\gamma,u)+(-1)^{j}\;\doublewidetilde{\mathfrak{h}}_{1,j}(\gamma,u)\nonumber\\
\leq&\frac{\zeta(r+1)K_r(1)}{\zeta(r+2)}\sum_{i=0}^{r}\binom{r}{i}\gamma^{-(i+j+1)}\mathrm{Li}_{i+2}(-u)\Gamma(i+j+1)\nonumber\\
&+\frac{\zeta^2(r+1)\zeta(r+2)\zeta(r)}{\zeta(2(r+1))}E_r(1)\sum_{i=0}^{r}\binom{r}{i}\gamma^{-(i+j+1)}\mathrm{Li}_{i+2}(-u)\Gamma(i+j+1)+O(\gamma^{-j-r-3/2})\nonumber\\
&\leq\left(\frac{\zeta(r+1)K_r(1)}{\zeta(r+2)}+\frac{\zeta^2(r+1)\zeta(r+2)}{\zeta(2(r+1))}\zeta(r)E_r(1)\right)\sum_{i=0}^{r}\binom{r}{i}\gamma^{-(i+j+1)}\mathrm{Li}_{i+2}(-u)\Gamma(i+j+1)\nonumber\\
&\quad\quad\quad\quad\quad\quad\quad\quad\quad\quad\quad\quad\quad\quad\quad\quad\quad\quad\quad\quad\quad\quad\quad\quad+O(\gamma^{-j-r-3/2}).
\end{align}

\noindent
Thus, by combining equations~\eqref{f1derivativefinal} and~\eqref{f2derivativefinal}, we deduce that~\eqref{curlyfgammau} becomes
\begin{align}\label{fgammauderivativefinal}
&\frac{\partial^j}{\partial\gamma^j}F(\gamma,u)
=-\frac{\partial^j}{\partial\gamma^j}\mathcal{F}_1(\gamma,u)
+\frac{\partial^j}{\partial\gamma^j}\mathcal{F}_2(\gamma,u)
=(-1)^{j+1}\mathfrak{h}_{1,j}(\gamma,u)+(-1)^{j}\mathfrak{h}_{2,j}(\gamma,u)\nonumber\\
\leq &\bigg|\left(\frac{\zeta(r+1)K_r(1)}{\zeta(r+2)}+\frac{\zeta^2(r+1)\zeta(r+2)}{\zeta(2(r+1))}\zeta(r)E_r(1)\right)\sum_{i=0}^{r}\binom{r}{i}\gamma^{-(i+j+1)}\mathrm{Li}_{i+2}(-u)\Gamma(i+j+1)\nonumber\\
&-\zeta(r+1)\mathrm{Li}_{r+2}(-u)\Gamma(r+j+1)\gamma^{-(r+j+1)}\bigg|+O(\gamma^{-j-r-3/2})\nonumber\\
\leq& N(r)\mathrm{Li}_{r+2}(-u)\Gamma(r+j+1)\gamma^{-(r+j+1)}+O(\gamma^{-j-r-3/2}),
\end{align}
where
\begin{align}\label{nrdefinition}
N(r)
\coloneqq\left|\frac{\zeta(r+1)K_r(1)}{\zeta(r+2)}+\frac{\zeta^2(r+1)\zeta(r+2)}{\zeta(2(r+1))}\zeta(r)E_r(1)-\zeta(r+1)\right|.  
\end{align}
\noindent
It is important to note that due to the oscillatory behaviour of the gap function $\Delta_f(n)$ for $f(n)=\sigma_r(n)$, together with the fact that we can only derive an upper bound for $D_2(s,r)$, we obtain an average order for the derivative of $F(\gamma,u)$ rather than an exact formula.

%Observe that the terms in the final two lines of~\eqref{fgammauderivativefinal} will be absorbed into the error term, as their order of magnitude is lower than that of the main term and hence negligible in comparison. 

\medskip

Thus, for the saddle-point $n$, we have
\begin{align}\label{saddlepointsolutionforsigmak}
n=-F_{\gamma}(\tau,u)
=&O\left(N(r)\mathrm{Li}_{r+2}(-u)\Gamma(r+2)\tau^{-(r+2)}\right)
%&\bigg(1+\sum_{i=0}^{r-1}\mathrm{Li}_{i+2}(-u)\Gamma(i+2)\tau^{-i}\bigg)-1\bigg)+O\left(\tau^{-2}\right),
\end{align}
whereas
\begin{align*}
B^2=F_{\gamma\gamma}(\tau,u)
=&O\left(N(r)\mathrm{Li}_{r+2}(-u)\Gamma(r+3)\tau^{-(r+3)}\right).
\end{align*}

\noindent
For the third derivative appearing in the error term of~\eqref{saddlepointequationforsigmak}, we have
\begin{align*}
\frac{\partial^3}{\partial \theta^3}F(\tau+i\theta,u)
=-i\sum_{k=1}^{\infty}\frac{k^3\Delta_f(k)(1-u^{-1}e^{k(\tau+i\theta)})}{(1+u^{-1}e^{k(\tau+i\theta)})^3}.
\end{align*}

\noindent
The following estimation technique is adapted from~\cite{madwag}. Let $k_0=\tau^{-(1+c)}$ for some $c>0$, and for simplicity of notation, set $v=u^{-1}$. We consider two cases: first, when $k$ is large i.e., $k>k_0$; and second, when $k\leq k_0$. Beginning with the large sum, we obtain
\begin{align}\label{thirdorderderivative}
&\left|-i\sum_{k>k_0}\frac{k^3\Delta_f(k)(1-ve^{k(\tau+i\theta)})}{(1+ve^{k(\tau+i\theta)})^3}\right|
\leq\sum_{k>k_0}\frac{k^3|\Delta_f(k)|(1+ve^{k\tau})}{|1+ve^{k(\tau+i\theta)}|^3}\nonumber\\
\leq&\sum_{k>k_0}\frac{k^3|\Delta_f(k)|(1+ve^{k\tau})}{(ve^{k\tau}-1)^3}
=O\left(\sum_{k>k_0}\frac{k^3|\Delta_f(k)|}{e^{k\tau}}\right)
=O\left(\tau^{-r-4}\right),
\end{align}
where we have applied the upper bound (see, for instance,~\cite{ramanujan})
\begin{align*}
\sigma_r(n)
<n^r(1+2^{-r}+3^{-r}+\cdots)
<n^r\zeta(r).    
\end{align*}

\noindent
For the remaining case, we observe that
\begin{align*}
\left|1+v^{k(\tau+i\theta)}\right|
\geq (1+ve^{k\tau})\cos{\left(\frac{k\theta}{2}\right)}.
\end{align*}

\noindent
Thus, we obtain
\begin{align*}
&\left|-i\sum_{k\leq k_0}\frac{k^3\Delta_f(k)(1-u^{-1}e^{k(\tau+i\theta)})}{(1+u^{-1}e^{k(\tau+i\theta)})^3}\right| 
\leq\sum_{k\leq k_0}\frac{k^3|\Delta_f(k)|(1+ve^{k\tau})}{|1+ve^{k(\tau+i\theta)}|^3}\\
\leq&\sum_{k\leq k_0}\frac{k^3|\Delta_f(k)|(1+ve^{k\tau})}{(1+ve^{k(\tau+i\theta)})^3}\left(1+O((k\theta)^2)\right)
\leq \sum_{k\leq k_0}\frac{k^3|\Delta_f(k)|}{ve^{k\tau}}\left(1+O((k\theta)^2)\right)\\
\leq&\sum_{k\geq 1}\frac{k^3|\Delta_f(k)|}{ve^{k\tau}}+O\left(\sum_{k\geq 1}\frac{k^5|\Delta_f(k)|\theta^2}{e^{k\tau}}\right).
\end{align*}

\noindent
Applying the Mellin transform and converse mapping, we have
\begin{align*}
\left|-i\sum_{k=1}^{\infty}\frac{k^3\Delta_f(k)(1-u^{-1}e^{k(\tau+i\theta)})}{(1+u^{-1}e^{k(\tau+i\theta)})^3}\right|
=O\left(\tau^{-r-4}+\tau^{-r-6}\theta^2\right).
\end{align*}

\noindent
Combining this with~\eqref{thirdorderderivative}, we get
\begin{align*}
\frac{\partial^3}{\partial\theta^3}F(\tau+i\theta,u)
=O(\tau^{-r-4}),
\end{align*}
for $|\theta|\leq\tau^{1+3r/7}$. All these together leads to the expansion
\begin{align*}
F(\tau+i\theta,u)
=F(\tau,u)+in\theta-\frac{B^2}{2}\theta^2+O(\tau^{-r-4}\theta^3).
\end{align*}

\noindent
Thus for the integral $\Omega_{1,f}$, we obtain
\begin{align*}
&\frac{e^{n\tau}}{2\pi}\int_{-\theta_n}^{\theta_n}\exp(in\theta+F(\tau+i\theta,u))d\theta\\
=&\frac{e^{n\tau+F(\tau,u)}}{2\pi}\int_{-\theta_n}^{\theta_n}\exp\left(-\frac{B^2}{2}\theta^2+O(\tau^{2r/7-1})\right)d\theta\\
=&\frac{e^{n\tau+F(\tau,u)}}{2\pi}\left(\int_{-\theta_n}^{\theta_n}\exp\left(-\frac{B^2}{2}\theta^2\right)d\theta\right)\left(1+O(\tau^{2r/7-1})\right).
\end{align*}

\noindent
Finally, we transform the integral into a Gaussian integral and arrive at
\begin{align*}
\int_{-\theta_n}^{\theta_n}\exp\left(-\frac{B^2}{2}\theta^2\right)d\theta
=&\int_{-\infty}^{\infty}\exp\left(-\frac{B^2}{2}\theta^2\right)d\theta
-2\int_{\theta_n}^{\infty}\exp\left(-\frac{B^2}{2}\theta^2\right)d\theta\\
=&\frac{\sqrt{2\pi}}{B}+O\left(\int_{\theta_n}^{\infty}\exp\left(-\frac{B^2\tau^{1+3r/7}}{2}\theta\right)d\theta\right)\\
=&\frac{\sqrt{2\pi}}{B}+O\left(\tau^{-1-3r/7}B^{-2}\exp\left(-\frac{\tau^{-r/7-1}}{2}\right)\right),
\end{align*}
completing the proof.
\end{proof}

\subsection{Estimate of $\Omega_{2,f}$}

%\subsubsection{Estimate of $\Omega_{2,\sigma_k}$}

We proceed to prove the following lemma, which establishes an upper bound for the integrand in~\eqref{errortermintegral} when $f(n)=\sigma_r(n)$ for fixed $r\geq 2$. 

\begin{lemma}\label{omega2sigmak}
For a fixed $r\geq 2$ with $f(n)=\sigma_r(n)$, the error term is bounded as follows
\begin{align*}
|\Omega_{2,\sigma_r}|
=O\left(\exp\left(n\tau+F(\tau,u)-c_{r}\tau^{-r/7}\right)\right),    
\end{align*} 
where $c_{r}$ is a constant uniformly in $u$.
\end{lemma}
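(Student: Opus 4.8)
The plan is to bound $|\Omega_{2,\sigma_r}|$ by pulling out the factor $e^{n\tau}$ and estimating $|\exp(F(\tau+i\theta,u))|$ uniformly over the arc $\theta_n<|\theta|\le\pi$. Writing $F(\tau+i\theta,u)=\sum_{k\ge1}\Delta_f(k)\log(1+ue^{-(\tau+i\theta)k})$, we have
\begin{align*}
\mathrm{Re}\bigl(F(\tau+i\theta,u)-F(\tau,u)\bigr)
=\sum_{k\ge1}\Delta_f(k)\,\mathrm{Re}\!\left(\log\frac{1+ue^{-(\tau+i\theta)k}}{1+ue^{-\tau k}}\right)
=\tfrac12\sum_{k\ge1}\Delta_f(k)\log\frac{|1+ue^{-(\tau+i\theta)k}|^2}{(1+ue^{-\tau k})^2}.
\end{align*}
Since $|1+ue^{-(\tau+i\theta)k}|^2=(1+ue^{-\tau k})^2-2ue^{-\tau k}(1-\cos k\theta)$, each summand is $\le 0$, so $F$ is maximised on the circle at $\theta=0$; the task is to show the cumulative loss is at least $c_r\tau^{-r/7}$. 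The first step is therefore to obtain, for a single $k$, the elementary bound
\[
\log\frac{|1+ue^{-(\tau+i\theta)k}|^2}{(1+ue^{-\tau k})^2}
\le -\,c\,\frac{ue^{-\tau k}}{1+ue^{-\tau k}}(1-\cos k\theta)
\le -\,c'\,ue^{-\tau k}(1-\cos k\theta)
\]
for $\tau k$ bounded, uniformly in $u\in[\delta,\delta^{-1}]$ (for large $\tau k$ the summand is negligible anyway since $\Delta_f(k)\ll k^r$ and $\sum_{k}k^r e^{-\tau k}\ll\tau^{-r-1}$ converges).

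The second step is to handle the sign-changing weight $\Delta_f(k)=\sigma_r(k+1)-\sigma_r(k)$, which is the genuine obstacle here and the reason one cannot simply quote the argument of Lipnik--Madritsch--Tichy verbatim. The clean way is to avoid positivity of the individual weights: sum by parts to rewrite $\sum_{k}\Delta_f(k)g_\theta(k)$ (with $g_\theta(k)=\log|1+ue^{-(\tau+i\theta)k}|^2-\log(1+ue^{-\tau k})^2\le0$) in terms of $\sigma_r(k)\bigl(g_\theta(k-1)-g_\theta(k)\bigr)$, or, more directly, exploit that over any window of consecutive integers the partial sums $\sum_{k\le K}\Delta_f(k)=\sigma_r(K+1)-\sigma_r(1)$ track $\zeta(r+1)K^{r+1}/(r+1)$, so that the ``effective'' density of the weight near scale $k\asymp\tau^{-1}$ is comparable to $k^r$. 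Concretely I would split the sum at a threshold $k_1\asymp\tau^{-1}$, control the tail $k>k_1$ trivially by $\sum_{k>k_1}k^r e^{-\tau k}=O(\tau^{-r-1}e^{-c})$ which is absorbed into the stated error, and on the head $k\le k_1$ use Abel summation together with the asymptotic for $\sum_{k\le K}\sigma_r(k)$ to replace $\Delta_f(k)$ by its average order $\asymp k^r$ up to an error term that is again $O(\tau^{-r-1/2})$ or smaller after Mellin transform and converse mapping (exactly the mechanism used in Lemma~\ref{maintermforsigmar}).

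The third step is the quantitative gain. Having reduced to a sum essentially of the form $\sum_{k\le k_1}k^r e^{-\tau k}(1-\cos k\theta)$, I would argue as in the Lipnik--Madritsch--Tichy treatment of $\Omega_2$: for $\theta$ in the range $\theta_n<|\theta|\le\pi$ with the choice $\theta_n=\tau^{1+3r/7}$ made in Lemma~\ref{maintermforsigmar}, the factor $1-\cos k\theta$ is bounded below by a positive constant for a positive proportion of the $k\asymp\tau^{-1}$ (those $k$ with $k\theta$ not too close to a multiple of $2\pi$), so
\[
\sum_{k\le k_1}k^r e^{-\tau k}(1-\cos k\theta)\;\gg\;\theta^2\!\!\sum_{k\asymp\theta^{-1}}k^r e^{-\tau k}\;\gg\;\tau^{-r}\quad\text{or at worst}\quad\gg\tau^{-r/7},
\]
the weakest bound over the arc occurring near $|\theta|\asymp\theta_n$, which produces the loss $c_r\tau^{-r/7}$. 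Multiplying by $\tfrac12$ and exponentiating gives $|\exp F(\tau+i\theta,u)|\le\exp(F(\tau,u)-c_r\tau^{-r/7})$ uniformly on the arc, and then
\[
|\Omega_{2,\sigma_r}|\le\frac{e^{n\tau}}{2\pi}\int_{\theta_n<|\theta|\le\pi}|\exp F(\tau+i\theta,u)|\,d\theta
\le e^{n\tau+F(\tau,u)-c_r\tau^{-r/7}},
\]
which is the claim. The main obstacle throughout is genuinely the sign change of $\Delta_f$: every place where the $n^\alpha$-argument uses $\Delta_f(k)\ge0$ must be replaced by an Abel-summation / average-order argument feeding off the Dirichlet-series estimates of Section~3 (Lemmas~\ref{dsklemma} and~\ref{gausssumlemma}), and one must check that the resulting secondary error terms are indeed dominated by $\tau^{-r/7}$, which forces the slightly lossy exponent $r/7$ rather than something closer to $r$.
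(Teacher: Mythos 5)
Your overall skeleton (bound the ratio $|Q_{\sigma_r}(e^{-\tau-i\theta},u)|/Q_{\sigma_r}(e^{-\tau},u)$ by $\exp(-c_r\tau^{-r/7})$ uniformly on the arc $\theta_n<|\theta|\leq\pi$ and then integrate) is the same as the paper's, but the step you yourself identify as the crux --- coping with the sign changes of $\Delta_f(k)=\sigma_r(k+1)-\sigma_r(k)$ --- is not actually carried out, and the sketch offered in its place would not work. The partial sums are $\sum_{k\leq K}\Delta_f(k)=\sigma_r(K+1)-\sigma_r(1)\asymp K^{r}$, not $\zeta(r+1)K^{r+1}/(r+1)$ (that is the summatory function of $\sigma_r$ itself), and $\sigma_r(K+1)/(K+1)^r$ oscillates between roughly $1$ and $\zeta(r)$; hence there is no decomposition ``average order plus an error $O(\tau^{-r-1/2})$'': the deviation of $\sigma_r(K+1)$ from any smooth approximant is of the same order as the main term, and no Mellin/converse-mapping device (which in Lemma~\ref{maintermforsigmar} is applied to absolutely convergent Dirichlet series, not to a pointwise replacement) removes this. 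Moreover, after Abel summation the factor $g_\theta(k)-g_\theta(k+1)$ has no fixed sign (it oscillates through $\cos(k\theta)$), so having upper and lower bounds on $\sigma_r(K+1)$ of the same order does not yield the required one-sided lower bound on $-\mathrm{Re}\bigl(F(\tau+i\theta,u)-F(\tau,u)\bigr)$; your opening claim that each summand is $\leq 0$ so that ``$F$ is maximised at $\theta=0$'' already presupposes $\Delta_f(k)\geq 0$, which is precisely what fails. The also-relevant effective density of the weight at scale $k\asymp\tau^{-1}$ is $\asymp k^{r-1}$ (the average derivative of $\sigma_r$), not $k^{r}$.

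The quantitative step is likewise off in the critical range: for $|\theta|\asymp\theta_n=\tau^{1+3r/7}$ the sum over $k\asymp\theta^{-1}$ in your display is exponentially small (there $k\tau\asymp\tau^{-3r/7}\to\infty$), so one must keep $k\asymp\tau^{-1}$ and use $1-\cos(k\theta)\asymp(k\theta)^2$, which with the weight $k^{r-1}$ gives exactly the loss $\asymp\tau^{2+6r/7}\cdot\tau^{-(r+2)}=\tau^{-r/7}$. The paper resolves the crux differently: it asserts the pointwise lower bound $|\Delta_f(n)|\geq(r-1)n^{r-1}$ (via $\sigma_r(n)>n^{r}$ and the mean value theorem), reduces to $\sum_n n^{r-1}e^{-n\tau}(1-\cos(ny))$, applies the Li--Chen inequality of Lemma~\ref{lichenlemma} to obtain a closed-form lower bound, and then performs the trigonometric estimate near $\theta_n$ in Lemma~\ref{qlemmaforomega2sigmak}; no Abel summation or average-order replacement appears there. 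Whatever one thinks of that pointwise bound, your proposal does not supply a substitute argument that genuinely controls the oscillating weight, so as written it has a real gap at the decisive step.
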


We first note the elementary inequality $\sigma_r(n)>n^r$ for $n\geq 2$ and $r\geq 1$,  which simplifies the analysis in this section. Additionally, we incorporate the following lemma by Li and Chen~\cite[Lemma 2.5]{lichen} as a key element of the preceding argument. 

\begin{lemma}[Li-Chen,~\cite{lichen}]\label{lichenlemma}
Let $k\geq 1$ be fixed. Then for $\xi>0$, we have
\begin{align*}
\sum_{n\geq 1}n^{k-1}e^{-n\xi}(1-\cos(ny))\geq \varrho\left(\frac{e^{-\xi}}{(1-e^{-\xi})^{k}}-\frac{e^{-\xi}}{\left|1-e^{-\xi-iy}\right|^{k}}\right),   
\end{align*}
where $\varrho$ is a positive constant which depends only on $k$.
\end{lemma}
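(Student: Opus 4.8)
The final statement to prove is Lemma~\ref{lichenlemma}, the Li--Chen inequality itself. Here is the proof plan.

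\medskip

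\textbf{The approach.} The plan is to recognise that both sides of the claimed inequality are built from the Eisenstein-type kernel $\sum_{n\geq1} n^{k-1}e^{-n\xi}z^n$ evaluated at $z=1$ versus $z=e^{iy}$, and to exploit the crucial sign structure: writing $1-\cos(ny)=\operatorname{Re}(1-e^{iny})$, the left-hand side equals $\operatorname{Re}\bigl(S(\xi)-S(\xi,y)\bigr)$, where $S(\xi)=\sum_{n\geq1}n^{k-1}e^{-n\xi}$ and $S(\xi,y)=\sum_{n\geq1}n^{k-1}e^{-n\xi}e^{iny}$. First I would treat the base case $k=1$, where $S(\xi)=\frac{e^{-\xi}}{1-e^{-\xi}}$ and $S(\xi,y)=\frac{e^{-\xi+iy}}{1-e^{-\xi+iy}}$ in closed form; a direct computation of $\operatorname{Re}\bigl(\frac{e^{-\xi}}{1-e^{-\xi}}-\frac{e^{-\xi+iy}}{1-e^{-\xi+iy}}\bigr)$ and a comparison with $\frac{e^{-\xi}}{1-e^{-\xi}}-\frac{e^{-\xi}}{|1-e^{-\xi+iy}|^{?}}$ (with the exponent $k=1$) should give the result with an explicit constant $\varrho$, or even $\varrho=1$ after simplification. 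For general $k\geq1$, I would then argue by differentiation: applying $\bigl(-\tfrac{d}{d\xi}\bigr)^{k-1}$ to the $k=1$ identity inserts a factor $n^{k-1}$ inside the sum, and on the closed-form side produces $\frac{d^{k-1}}{d\xi^{k-1}}$ derivatives of $\frac{e^{-\xi}}{1-e^{-\xi}}$-type expressions, which one can relate back to $\frac{e^{-\xi}}{(1-e^{-\xi})^k}$ up to positive constants depending only on $k$.

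\medskip

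\textbf{Key steps in order.} (i) Rewrite the left side as $\operatorname{Re}\sum_{n\geq1}n^{k-1}e^{-n\xi}(1-e^{iny})$ and, by Abel summation or the identity $1-e^{iny}=(1-e^{iy})(1+e^{iy}+\dots+e^{i(n-1)y})$, factor out $(1-e^{iy})$. (ii) For $k=1$, sum the resulting geometric-type series explicitly and reduce the inequality to an elementary trigonometric inequality in the two real variables $\xi>0$, $y\in\mathbb{R}$; here I expect the bound to reduce to something like $\operatorname{Re}\frac{1-e^{-\xi}}{1-e^{-\xi+iy}}\cdot\frac{e^{-\xi}}{1-e^{-\xi}}\geq \frac{e^{-\xi}}{1-e^{-\xi}}-\frac{e^{-\xi}}{|1-e^{-\xi+iy}|}$, which after clearing denominators is a statement about $\operatorname{Re}(\bar{w})\geq |w|-\text{something}$ with $w=1-e^{-\xi+iy}$. (iii) Handle the general $k$ by induction: either differentiate the $k=1$ closed form $k-1$ times in $\xi$, tracking that all the combinatorial coefficients that arise are nonnegative, or alternatively use the integral representation $n^{k-1}=\frac{1}{\Gamma(k)}\int_0^\infty t^{k-1} \bigl(-\tfrac{d}{dt}\bigr)^{\!?}\dots$ — more cleanly, write $n^{k-1}e^{-n\xi}$ as a positive linear combination $\int_\xi^\infty \cdots$ of $e^{-n\xi'}$ with positive weights, so that the $k=1$ inequality integrates up. (iv) Collect the constant: $\varrho$ emerges as a product/ratio of Gamma factors and binomial coefficients from step (iii), manifestly depending only on $k$.

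\medskip

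\textbf{Main obstacle.} The delicate point is step (ii): after factoring out $(1-e^{iy})$ and summing, the left side is $\operatorname{Re}$ of a product of two complex numbers, whereas the right side involves the modulus $|1-e^{-\xi-iy}|^{-k}$, so the inequality is genuinely comparing a real part to a modulus and requires using that the ``extra'' factor $\frac{1-e^{-\xi}}{1-e^{-\xi+iy}}$ has real part bounded below in the right way. One must verify this uniformly for all $\xi>0$ and all $y$, including the regime $y$ near $0$ where both sides degenerate and the inequality is nearly tight, and the regime $\xi\to0^+$ where both sides blow up; a careful case split (say $|y|\leq\xi$ versus $|y|>\xi$, or equivalently comparing $|1-e^{-\xi}|$ with $|e^{-\xi}||1-e^{iy}|$) will be needed, and the constant $\varrho$ must be chosen to survive the worst case. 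Propagating this cleanly through the differentiation/integration in step (iii) without losing positivity of the coefficients is the second, more bookkeeping-heavy, difficulty.
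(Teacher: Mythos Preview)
The paper does not prove this lemma: it is quoted verbatim from Li and Chen (cited as \cite[Lemma 2.5]{lichen}) and used as a black box, with no argument supplied. There is therefore nothing in the paper to compare your proposal against.

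Assessing your plan on its own merits, the principal gap is step (iii). Your first route --- differentiating the $k=1$ inequality $k-1$ times in $\xi$ --- does not work as stated: differentiation does not preserve inequalities, and tracking that ``all the combinatorial coefficients that arise are nonnegative'' is not enough, since the derivatives of $\frac{e^{-\xi}}{|1-e^{-\xi-iy}|}$ with respect to $\xi$ are not simply related to $\frac{e^{-\xi}}{|1-e^{-\xi-iy}|^{k}}$ (the modulus is not holomorphic in $\xi+iy$, so no clean Leibniz/chain rule applies). Your second route --- writing $n^{k-1}e^{-n\xi}$ as a positive superposition $\int w(\xi',\xi)e^{-n\xi'}\,d\xi'$ and integrating the $k=1$ bound --- is structurally sound on the left-hand side, but on the right it produces
\[
\varrho\int w(\xi',\xi)\left(\frac{e^{-\xi'}}{1-e^{-\xi'}}-\frac{e^{-\xi'}}{|1-e^{-\xi'-iy}|}\right)d\xi',
\]
which is not of the form $\varrho\bigl(\frac{e^{-\xi}}{(1-e^{-\xi})^{k}}-\frac{e^{-\xi}}{|1-e^{-\xi-iy}|^{k}}\bigr)$; passing from the integrated expression to the target requires a further comparison that you have not supplied and that is not obvious. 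The $k=1$ base case you correctly flag as delicate, but nothing there is actually carried out either, so as written the proposal is a sketch of a strategy rather than a proof.
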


We are now ready to prove the following result, which will directly imply Lemma~\ref{omega2sigmak}. In this context, it is noteworthy that, owing to the inequality $\sigma_r(n)>n^r$, for $n\geq 2$ and $r\geq 1$, the following lemma bears resemblance to Lemma 7 in~\cite{lipniketal}. %However, we provide the argument in full detail for the convenience of the readers.

\begin{lemma}\label{qlemmaforomega2sigmak}
For a fixed $r\geq 2$ and any real $y$ satisfying $\theta_0\leq |y|\leq \pi$, we have
\begin{align*}
\frac{|Q_{\sigma_r}(e^{\tau+iy},u)|}{Q_{\sigma_r}(e^{-\tau,u})}
\leq\left(\exp\left(-\frac{2u(r-1)}{(1+u)^2}\varrho\left(\frac{1}{4}\tau^{-{r}/{7}}\right)\right)\right),
\end{align*}
where $\varrho$ is a constant depends on $r$.
\end{lemma}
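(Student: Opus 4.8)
The plan is to reduce the ratio of generating functions to an exponential of a real, negative quantity, and then to lower-bound that quantity using the Li--Chen estimate (Lemma~\ref{lichenlemma}) together with the elementary inequality $\sigma_r(n) > n^r$ for $n\geq 2$. First I would take logarithms: writing $z = e^{-\tau}$, we have
\[
\log\frac{|Q_{\sigma_r}(e^{\tau+iy},u)|}{Q_{\sigma_r}(e^{-\tau},u)}
= \sum_{k=1}^{\infty}\Delta_f(k)\Big(\log\big|1+u e^{-k\tau}e^{-iky}\big| - \log\big(1+u e^{-k\tau}\big)\Big),
\]
but since $\Delta_f(k) = \sigma_r(k+1)-\sigma_r(k)$ oscillates in sign, it is cleaner to go back to the original product over $\lfloor f(m)\rfloor$-type exponents, i.e. to use that $Q_{\sigma_r}(z,u) = \prod_{m}(1+u z^{\sigma_r(m)})$ up to the reindexing in~\eqref{bivariategeneratingseries}, so that all exponents are nonnegative. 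Then each factor contributes
\[
\log\frac{|1+u z^{\sigma_r(m)}e^{-i\sigma_r(m) y}|}{1+u|z|^{\sigma_r(m)}}
\le \frac{1}{2}\log\!\left(1 - \frac{2u|z|^{\sigma_r(m)}}{(1+u|z|^{\sigma_r(m)})^2}\big(1-\cos(\sigma_r(m) y)\big)\right)
\le -\,\frac{u|z|^{\sigma_r(m)}}{(1+u)^2}\big(1-\cos(\sigma_r(m) y)\big),
\]
using $|1+w|^2 = 1 + 2\mathrm{Re}(w) + |w|^2$ and $1+u|z|^{\sigma_r(m)} \le 1+u$ (valid since $|z|<1$), followed by $\log(1-x)\le -x$.

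Next I would discard all terms with $m$ not prime, keeping only the primes $p$ for which $\sigma_r(p) = p^r + 1$, and observe that as $p$ ranges over primes, $p^r+1$ takes distinct values; more crudely, I would bound the prime-indexed sum below by the full sum over integers $n\geq 2$ after absorbing constants, or directly invoke Lemma~\ref{lichenlemma} with the substitution adapted to the exponent sequence. Concretely, with $\xi = \tau$ (so $e^{-\xi} = |z|$) and exploiting $\sigma_r(n) > n^r$ one reduces to a sum of the shape $\sum_{n\geq 1} n^{k-1} e^{-n\xi}(1-\cos(ny))$ after a change of variables matching powers; the $r\geq 2$ gives the factor $(r-1)$ in the statement (coming, I expect, from the number of available "copies" or from differentiating the geometric series $r-1$ times). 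Applying Lemma~\ref{lichenlemma} yields a lower bound of the form $\varrho\big((1-e^{-\xi})^{-k} - |1-e^{-\xi-iy}|^{-k}\big)$ up to the prefactor $2u/(1+u)^2$.

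The main obstacle will be the final elementary estimate showing that
\[
\frac{1}{(1-e^{-\tau})^{k}} - \frac{1}{|1-e^{-\tau-iy}|^{k}} \;\gg\; \tau^{-r/7}
\]
uniformly for $\theta_0 \le |y|\le \pi$, where $\theta_0 = \tau^{1+3r/7}$ (or whatever the precise choice of $\theta_n$ is). Here one writes $|1-e^{-\tau-iy}|^2 = (1-e^{-\tau})^2 + 4e^{-\tau}\sin^2(y/2)$ and notes that for $|y|$ bounded away from $0$ the second term dominates, forcing $|1-e^{-\tau-iy}|$ to be of order $1$ while $1-e^{-\tau} \asymp \tau$; a Taylor/mean-value argument on $x\mapsto x^{-k/2}$ then extracts the gap, and one checks that the worst case $|y| = \theta_0$ still leaves a surplus of size $\tau^{-r/7}$ after the $\tfrac14$ is absorbed. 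Tracking the constants through $2u/(1+u)^2 \ge$ (const) for $\delta\le u\le\delta^{-1}$, and confirming the exponent bookkeeping $k \leftrightarrow r$ so that the power $\tau^{-r/7}$ emerges rather than some other power of $\tau$, is the delicate part; everything else is routine manipulation of the Euler product and the inequalities $\log(1-x)\le -x$ and $\sigma_r(n)>n^r$.
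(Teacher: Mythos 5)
Your plan diverges from the paper at the very first step, and the divergence is a genuine gap rather than an alternative route. You propose to avoid the sign oscillation of $\Delta_f(k)=\sigma_r(k+1)-\sigma_r(k)$ by rewriting $Q_{\sigma_r}(z,u)$ as $\prod_m\bigl(1+uz^{\sigma_r(m)}\bigr)$ ``up to the reindexing in~\eqref{bivariategeneratingseries}''. No such identity is available here: the equality $\prod_k(1+uz^k)^{\Delta_f(k)}=\prod_m(1+uz^{f(m)})$ holds only when $\Delta_f(k)$ is the counting function $\#\{m:\lfloor f(m)\rfloor=k\}$, as in Lipnik--Madritsch--Tichy, whereas in this paper $\Delta_f(k)$ is \emph{defined} as the difference $\sigma_r(k+1)-\sigma_r(k)$ in~\eqref{gfunctiondefintion}, which counts nothing and is negative infinitely often. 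Your rewritten product is a different generating function (parts of size $\sigma_r(m)\asymp m^r$, a sparse sequence) and the rest of your argument inherits this mismatch: after restricting to primes you face $\sum_p e^{-(p^r+1)\tau}\bigl(1-\cos((p^r+1)y)\bigr)$, which is not of the shape $\sum_n n^{k-1}e^{-n\xi}(1-\cos(ny))$ required by Lemma~\ref{lichenlemma}, and no change of variables you indicate produces it; moreover ``bound the prime-indexed sum below by the full sum over integers'' runs the inequality in the wrong direction (a subsum of nonnegative terms is a lower bound for the full sum, not the reverse). Finally, the factor $(r-1)$ does not come from ``copies'' or from differentiating a geometric series: in the paper it comes from the pointwise bound $|\Delta_f(n)|\geq c\bigl((n+1)^r-n^r\bigr)>(r-1)n^{r-1}$ via the mean value theorem, applied directly to the product with exponents $\Delta_f(n)$, which keeps the cosine argument linear in $n$ and hence lets Lemma~\ref{lichenlemma} be applied with $k=r$.

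For comparison, the paper's proof stays with the product over $k$, writes $|1+ue^{-n(\tau+iy)}|^2=(1+ue^{-n\tau})^2-2ue^{-n\tau}(1-\cos(ny))$, uses $\log(1-x)\leq -x$ together with $(1+ue^{-n\tau})^2\leq(1+u)^2$ and the lower bound $|\Delta_f(n)|>(r-1)n^{r-1}$, then invokes Lemma~\ref{lichenlemma} and finishes with the Taylor expansion at $|y|=\theta_n=\tau^{1+3r/7}$ giving the surplus $\tfrac{r}{2}\tau^{-r/7}\gg\tfrac14\tau^{-r/7}$. Your final elementary estimate (expanding $|1-e^{-\tau-iy}|^2$ and extracting $\gg\tau^{-r/7}$ at the worst case $|y|=\theta_n$) does match this last stage, but the path from the generating function to a sum of Li--Chen type is missing, so the proposal as written does not establish the lemma.
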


\begin{proof}
Utilising the inequality $\sigma_r(n)>n^r$ for $n\geq 2$ and $r\geq 2$, we can simplify
\begin{align*}
|\Delta_f(n)|
=|\sigma_r(n+1)-\sigma_r(n)|
\geq c((n+1)^{r}-n^{r})
=c\sum_{i=0}^{r-1}\binom{r}{i}n^{i},
\end{align*}
for some constant $c\geq 0$. However, for $c=0$ in the above inequality, the following argument becomes trivial. So, we only consider the case where $c>0$.

By the mean value theorem, there exists a $\xi\in (n,n+1)$ such that $(n+1)^{r}-n^{r}=r\xi^{r-1}$. This implies (see~\cite[Lemma 2.6]{lichen})
\begin{align}\label{lowerboundforgn}
|\Delta_f(n)|
\geq r\xi^{r-1}-1
=r\xi^{r}+\xi^{r}-1
=(r-1)\xi^{r-1}+\xi^{r-1}-1
>(r-1)n^{r-1}.
\end{align}

\noindent
Next, we express
\begin{align*}
\left|1+ue^{-n(\tau+iy)}\right|^2
=(1+ue^{-n\tau-niy})(1-ue^{-n\tau+niy})
=(1+ue^{n\tau})^2-2ue^{-n\tau}(1-\cos(ny)).
\end{align*}
\noindent
Using the above identities, we obtain
\begin{align*}
\left(\frac{|Q_{\sigma_r}(e^{\tau+iy},u)|}{Q_{\sigma_r}(e^{-\tau,u})}\right)^2
=&\prod_{n=1}^{\infty}\left(1-\frac{2ue^{-n\tau}(1-\cos(ny))}{(1+ue^{-n\tau})^2}\right)^{\Delta_f(n)}\\
\leq &\exp\left(\sum_{n=1}^{\infty}|\Delta_f(n)|\log\left|1-\frac{2ue^{-n\tau}(1-\cos(ny))}{(1+ue^{-n\tau})^2}\right|\right)\\
\leq&\exp\left(-\sum_{n=1}^{\infty}|\Delta_f(n)|\left|\frac{2ue^{-n\tau}(1-\cos(ny))}{(1+ue^{-n\tau})^2}\right|\right)\\
\leq&\exp\left(-\frac{2u}{(1+u)^2}\sum_{n=1}^{\infty}|\Delta_f(n)|e^{-n\tau}(1-\cos(ny))\right)\\
\leq&\exp\left(-\frac{2u(r-1)}{(1+u)^2}\sum_{n=1}^{\infty}n^{r-1}e^{-n\tau}(1-\cos(ny))\right).
%\leq&\exp\left(-\frac{2uk}{(1+u^2)}\sum_{n=1}^{\infty}n^{k+1-j}e^{-n\tau}(1-\cos(ny))\right).
\end{align*}

\noindent
Finally, applying Lemma~\ref{lichenlemma}, yields

\begin{align}\label{mainexpression}
\left(\frac{|Q_{\sigma_r}(e^{\tau+iy},u)|}{Q_{\sigma_r}(e^{-\tau,u})}\right)^2
\leq\exp\left(-\frac{2u(r-1)}{(1+u)^2}\varrho\left(\frac{e^{-\tau}}{(1-e^{-\tau})^{r}}-\frac{e^{-\tau}}{\left|1-e^{-\tau-iy}\right|^{r}}\right)\right).
\end{align}

%While the subsequent argument bears some resemblance to \cite[Lemma 7]{lipniketal}, we have considered the terms in the preceding expressions with more precision. 

\medskip

Beginning with a few standard trigonometric identities from~\cite{lichen}, we express
\begin{align}\label{trig1}
\left|1-e^{-\tau-i\theta}\right|^{r}
=&((1-e^{-\tau})^2+2e^{-\tau}(1-\cos\theta))^{\tfrac{r}{2}}\nonumber\\
\geq & (1-e^{-\tau})^{r}\left(1+\frac{2e^{-\tau}}{(1-e^{-\tau})^2}(1-\cos\theta_n)\right)^{\tfrac{r}{2}},
\end{align}
where $\theta_n\leq |\theta|\leq \pi$. By employing the Taylor series expansion of $\cos{\theta}$, we obtain
\begin{align*}
1-\cos\theta_n
=\frac{1}{2}\theta_n^2+O(\theta_n^4)
=\frac{1}{2}\tau^{2+{6r}/{7}}+O(\theta_n^{4+12r/7}),
\end{align*}
and similarly, $e^{-\tau}=1-\tau+O(\tau^2)$. Consequently, we derive
\begin{align*}
\frac{2e^{-\tau}}{(1-e^{-\tau})^2}(1-\cos\theta_n)
=\frac{2(1+O(\tau))(\tfrac{1}{2}\tau^{2+{6r}/{7}}+O(\theta_n^{4+12r/7}))}{\tau^2(1+O(\tau))}
=\tau^{{6r}/{7}}(1+O(\tau)).
\end{align*}

\medskip
\noindent
Substituting the above expression into~\eqref{trig1} gives
\begin{align*}
\left|1-e^{-\tau-i\theta}\right|^{r}
\geq (1-e^{-\tau})^{r}\left(1+\tau^{{6r}/{7}}(1+O(\tau))\right)^{\tfrac{r}{2}}\\
=(1-e^{-\tau})^{r}\left(1+\frac{r\tau^{{6r}/{7}
}}{2}+O\left(\tau^{{6r}/{7}+1}+\tau^{{12r}/{7}}\right)\right).
\end{align*}

\noindent
Therefore, the inner sum of~\eqref{mainexpression} simplifies to
\begin{align*}
\frac{e^{-\tau}}{(1-e^{-\tau})^{r}}-\frac{e^{-\tau}}{\left|1-e^{-\tau-iy}\right|^{r}}
&\geq\frac{e^{-\tau}}{(1-e^{-\tau})^{r}}\left(1-\frac{1}{1+\frac{r\tau^{{6r}/{7}}}{2}+O\left(\tau^{{6r}/{7}+1}+\tau^{{12r}/{7}}\right)}\right)\\
&=\frac{e^{-\tau}}{(1-e^{-\tau})^{r}}\left(\frac{r\tau^{{6r}/{7}}}{2}+O\left(\tau^{{6r}/{7}+1}+\tau^{{12r}/{7}}\right)\right)\\
&=\frac{1-O(\tau)}{\tau^{r}(1-O(\tau))}\left(\frac{r\tau^{{6r}/{7}}}{2}+O\left(\tau^{{6r}/{7}+1}+\tau^{{12r}/{7}}\right)\right)\\
&\geq\frac{r\tau^{-{r}/{7}}}{2}+O\left(\tau^{1-{r}/{7}}+\tau^{12r/7}\right)
\gg\frac{1}{4}\tau^{-\tfrac{r}{7}}.
\end{align*}

\noindent
Substituting this bound into~\eqref{mainexpression}, we conclude the proof.
\end{proof}

\medskip
\noindent
{\it Proof of Lemma~\ref{omega2sigmak}.} By the definition given in~\eqref{errortermintegral} for $f(n)=\sigma_r(n)$, the following inequity holds
\begin{align*}
|\Omega_{2,\sigma_r}|
\leq\frac{e^{n\tau}}{2\pi}\int_{\theta_n}^{\pi}\left|{Q}_{\sigma_r}(e^{-\tau-i\theta},u)\right|d\theta 
=\frac{e^{n\tau+F(\tau,u)}}{\pi}\int_{\theta_n}^{\pi}\frac{|Q_{\sigma_r}(e^{-\tau-i\theta},u)|}{Q_{\sigma_r}(e^{-\tau},u)}d\theta.
\end{align*}

\noindent
By employing Lemma~\ref{qlemmaforomega2sigmak}, we arrive at
\begin{align*}
|\Omega_{2,\sigma_r}|
\ll\exp\left(n\tau+F(\tau,u)-c_{r}\tau^{-r/7}\right),
\end{align*}
where $c_{r}$ is a constant that remains uniform in $u$, as noted.
\qed

\section{The Moment Generating Function}

In this section, we employ the estimates of $\Omega_{1,f}$ and $\Omega_{2,f}$ to derive
\begin{align*}
\mathcal{Q}_{f,n}(u)
=\frac{e^{n\tau+F(\tau,u)}}{\sqrt{2\pi B}}\left(1+O\left(\tau^{2r/7-1}\right)\right),
\end{align*}
where $r\geq 2$ is fixed and $B^2=F_{\gamma\gamma}(\tau,u)$. \\

We adopt the method outlined in~\cite[\S4.3]{lipniketal}, by considering the asymptotic expansion of the moment generating function associated with the random variable ${\varpi}_{f,n}$. Let $M_{f,n}(\theta)=\mathbb{E}(e^{(\varpi_{f,n}-\mu_{f,n})\theta/\nu_{f,n}})$ denote the moment generating function for real $\theta$, and let $\mu_{f,n}$ and $\nu_{f,n}$ represent the mean and standard deviation, as defined in~\eqref{meansigma} and~\eqref{variancesigma}, respectively. This leads to the following result.

\begin{lemma}\label{momentgeneratingfunctionlemma}
Let $r\geq 2$ be fixed and $f(n)=\sigma_r(n)$ be the generalised divisor function. Then as $n\to\infty$, for bounded $\theta$, the following holds
\begin{align*}
M_{f,n}(\theta)
=\exp\left(\frac{\theta^2}{2}+O\left(n^{-2r/7(r+2)+1/(r+2)}\right)\right).
\end{align*}
\end{lemma}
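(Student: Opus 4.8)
The plan is to follow the saddle-point machinery assembled in the preceding sections and convert the asymptotics of $\mathcal{Q}_{f,n}(u)$ into an asymptotic for the normalised moment generating function $M_{f,n}(\theta)$. First I would record the consequence of Lemma~\ref{maintermforsigmar} and Lemma~\ref{omega2sigmak}: combining $\mathcal{Q}_{f,n}(u)=\Omega_{1,f}(n)+\Omega_{2,f}(n)$, the contribution of $\Omega_{2,f}$ is exponentially smaller by a factor $\exp(-c_r\tau^{-r/7})$ than the main term $e^{n\tau+F(\tau,u)}/\sqrt{2\pi B}$, so
\begin{align*}
\mathcal{Q}_{f,n}(u)
=\frac{e^{n\tau+F(\tau,u)}}{\sqrt{2\pi B}}\left(1+O\left(\tau^{2r/7-1}\right)\right),
\end{align*}
with $B^2=F_{\gamma\gamma}(\tau,u)$ and $\tau=\tau(n,u)$ chosen so that $n=-F_\gamma(\tau,u)$. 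From the saddle-point equation~\eqref{saddlepointsolutionforsigmak} one has $\tau\asymp n^{-1/(r+2)}$ (up to constants depending on $u$ and $r$), so the error term $\tau^{2r/7-1}$ translates into $n^{(1-2r/7)/(r+2)}=n^{-2r/(7(r+2))+1/(r+2)}$, which is exactly the error exponent claimed in the statement.

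Next I would write $M_{f,n}(\theta)=\exp(-\mu_{f,n}\theta/\nu_{f,n})\,\mathcal{Q}_{f,n}(e^{\theta/\nu_{f,n}})/\mathcal{Q}_{f,n}(1)$ and take logarithms. Setting $u=e^{\theta/\nu_{f,n}}$, let $\tau=\tau(n,u)$ and $\tau_0=\tau(n,1)$ be the corresponding saddle points. Then
\begin{align*}
\log M_{f,n}(\theta)
=-\frac{\mu_{f,n}\theta}{\nu_{f,n}}
+\big(n\tau+F(\tau,u)\big)-\big(n\tau_0+F(\tau_0,1)\big)
-\tfrac12\log\frac{B^2(\tau,u)}{B^2(\tau_0,1)}
+O\!\left(\tau^{2r/7-1}\right).
\end{align*}
The logarithmic derivative of $B^2$ is lower order and will be absorbed into the error, as in~\cite[\S4.3]{lipniketal}. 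The heart of the matter is to Taylor-expand $\Psi(u)\coloneqq n\tau(n,u)+F(\tau(n,u),u)$ in powers of $s\coloneqq\theta/\nu_{f,n}$ around $s=0$. Because $\tau$ is defined by the stationarity condition $\partial_\gamma F(\tau,u)=-n$, the envelope theorem gives $\frac{d}{ds}\Psi=\partial_s F(\tau,u)$ evaluated along the saddle, with no contribution from $d\tau/ds$. Differentiating once more and using $u=e^s$, $u\,\partial_u=\partial_s$, one finds $\Psi'(0)=\mu_{f,n}$ and $\Psi''(0)=\nu_{f,n}^2$, precisely matching the definitions~\eqref{meansigma} and~\eqref{variancesigma}; indeed the displayed formulas for $\mu_{f,n}$ and $\nu^2_{f,n}$ are engineered so that the linear term cancels $\mu_{f,n}\theta/\nu_{f,n}$ and the quadratic term produces $\theta^2/2$ after dividing by $\nu_{f,n}^2$. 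The cubic and higher terms contribute $O(|\theta|^3 \nu_{f,n}^{-3}\cdot \Psi'''(\xi))$; since $\Psi'''$ is of order $n^{}$ while $\nu_{f,n}^2\asymp n^{(r+1)/(r+2)}$, for bounded $\theta$ this remainder is of the same order $O(n^{-2r/(7(r+2))+1/(r+2)})$ or smaller, and is dominated by the error already present from $\mathcal{Q}_{f,n}$.

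The main obstacle, and the step I would spend the most care on, is making the Taylor expansion of $\Psi(u)$ rigorous and bounding the third derivative uniformly: this requires differentiating the implicit relation $n=-F_\gamma(\tau,u)$ to control $d\tau/ds$, and then invoking the estimates for $\partial^j_\gamma F(\gamma,u)$ from~\eqref{fgammauderivativefinal} together with bounds on the mixed derivatives $\partial_u\partial^j_\gamma F$. Since~\eqref{fgammauderivativefinal} only yields an \emph{upper bound} (not an asymptotic) for the derivatives of $F$, owing to the oscillation of $\Delta_f$ and the fact that $D_2(s,r)$ was only estimated from above, one must check that the upper bounds suffice to push all remainder terms below the target exponent $-2r/(7(r+2))+1/(r+2)$ — the arithmetic of comparing $\tau$-powers with $\nu_{f,n}$-powers is where an error would most easily creep in. Once the expansion
\begin{align*}
\log M_{f,n}(\theta)=\frac{\theta^2}{2}+O\!\left(n^{-2r/7(r+2)+1/(r+2)}\right)
\end{align*}
is established for bounded $\theta$, exponentiating gives the claim. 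I would then remark that, combined with Curtiss' theorem on convergence of moment generating functions (as in~\cite{lipniketal}), this yields the central limit theorem asserted in Theorem~\ref{cltforsigma}, while the tail bound follows from the Chernoff-type argument using $\Omega_{2,f}$.
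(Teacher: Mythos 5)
Your proposal is correct and follows essentially the same route as the paper: the saddle-point asymptotic for $\mathcal{Q}_{f,n}(u)$ with error $O(\tau^{2r/7-1})\asymp n^{-2r/7(r+2)+1/(r+2)}$, a Taylor expansion of $n\tau(n,u)+F(\tau(n,u),u)$ around $u=1$ in the variable $\theta/\nu_{f,n}$, and the saddle-point relations $n=-F_{\gamma}(\eta,1)$, $\tau_u=-F_{u\gamma}/F_{\gamma\gamma}$ (your envelope-theorem phrasing) to identify the linear and quadratic coefficients with $\mu_{f,n}=F_u(\eta,1)$ and $\nu_{f,n}^2=F_u+F_{uu}-F_{u\gamma}^2/F_{\gamma\gamma}$. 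The remainder control you flag as the delicate step is carried out in the paper exactly as you anticipate, via the converse-mapping upper bounds on $\tau_u,\tau_{uu},\tau_{uuu}$ and the mixed derivatives of $F$, yielding cubic terms of size $O\bigl(n^{\frac{r+1}{r+2}}|\theta|^3/\nu_{f,n}^3\bigr)$ that are dominated by the stated error for bounded $\theta$.
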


\begin{proof}
We begin by noting that
\begin{align}\label{asymtoticsofmomentgeneratingfunction}
M_{f,n}(\theta)
=&\exp\left(-\frac{\mu_{f,n}\theta}{\nu_{f,n}}\right)\frac{\mathcal{Q}_{f,n}(e^{\theta/\nu_{f,n}})}{\mathcal{Q}_{f,n}(1)}\nonumber\\
=&\sqrt{\frac{B^2(\tau(n,1),1)}{B^2(\tau(n,e^{\theta/\nu_{f,n}}),e^{\theta/\nu_{f,n}})}}\exp\Bigg(-\frac{\mu_{f,n}\theta}{\nu_{f,n}}+n\tau(n,e^{\theta/\nu_{f,n}})\nonumber\\
&\quad\quad\quad+{F}(\tau(n,e^{\theta/\nu_{f,n}}),e^{\theta/\nu_{f,n}})-n\tau(n,1)-{F}(\tau(n,1),1)+O(\tau^{2r/7-1})\Bigg).
\end{align}
\noindent
For simplicity of notation, we define $\tau=\tau(n,u)$ which is a function of $n$ and $u$ with respect to $u$. By employing the saddle-point solution given in~\eqref{saddlepointsolutionforsigmak}, we obtain (following the approach of~\cite{madwag})

\begin{align*}
\tau_u
=\tau_{u}(n,u)
=-\frac{{F}_{u\gamma}(\tau,u)}{{F}_{\gamma\gamma}(\tau,u)}
=\frac{\sum_{k=1}^{\infty}\frac{k\Delta_f(k)e^{k\tau}}{(e^{k\tau+u})^2}}{\sum_{k=1}^{\infty}\frac{uk^2\Delta_f(k)e^{kr}}{(e^{kr}+u)^2}},
\end{align*}
and similarly,
\begin{align*}
\tau_{uu}=\tau_{uu}(n,u)
=\frac{1}{{F}_{\gamma\gamma}(\tau,u)^3}\big(-{F}_{\gamma\gamma\gamma}(\tau,u){F}_{u\gamma}(\tau,u)^2
&+2{F}_{u\gamma\gamma}(\tau,u){F}_{u\gamma}(\tau,u){F}_{\gamma\gamma}(\tau,u)\\
&-{F}_{uu\gamma}(\tau,u){F}_{\gamma\gamma}(\tau,u)^2\big),
\end{align*}
and
\begin{align*}
\tau_{uuu}=\tau_{uuu}(n,u)
=&\frac{1}{{F}_{\gamma\gamma}(\tau,u)^5}\big(-{F}_{uuu\gamma}(\tau,u){F}_{\gamma\gamma}(\tau,u)^4\\
&+(3{F}_{uu\gamma\gamma}(\tau,u){F}_{u\gamma}(\tau,u)+3{F}_{uu\gamma}(\tau,u){F}_{u\gamma\gamma}(\tau,u)){F}_{\gamma\gamma}(\tau,u)^3\\
&+\big(-3{F}_{u\gamma\gamma\gamma}(\tau,u){F}_{u\gamma}(\tau,u)^2-6{F}_{u\gamma\gamma}(\tau,u)^2{F}_{u\gamma}(\tau,u)\\
&-3{F}_{uu\gamma}(\tau,u){F}_{\gamma\gamma\gamma}(\tau,u){F}_{u\gamma}(\tau,u)\big)F_{\gamma\gamma}(\tau,u)^2\\
&+\big({F}_{\gamma\gamma\gamma\gamma}(\tau,u){F}_{u\gamma}(\tau,u)^3+9{F}_{u\gamma\gamma}(\tau,u){F}_{\gamma\gamma\gamma}(\tau,u){F}_{u\gamma}(\tau,u)^2\big){F}_{\gamma\gamma}(\tau,u)\\
&-3{F}_{u\gamma}(\tau,u)^3{F}_{\gamma\gamma\gamma}(\tau,u)^2\big).
\end{align*}

\noindent
As it is evident from the preceding expression, it is necessary to estimate the partial derivative of ${F}_{\gamma}(\tau,u)$ with respect to $u$. To achieve this, we apply the estimate established in Lemma~\ref{maintermforsigmar} and proceed using a method analogous to that employed previously with the converse mapping. We provide a detailed illustration for ${F}_{u\gamma}(\tau,u)$; the remaining cases follow in a similar manner. For $j=1$, from~\eqref{fgammauderivativefinal} we write

\begin{align*}
{F}_{\gamma}(\gamma,u)
=\mathfrak{h}_{1,1}(\gamma,u)-\mathfrak{h}_{2,1}(\gamma,u)
=\sum_{n=1}^{\infty}\sigma_{r}(n+1)n\sum_{\ell=1}^{\infty}(-u)^{\ell}e^{-n\ell\gamma}-\sum_{n=1}^{\infty}\sigma_{r}(n)n\sum_{\ell=1}^{\infty}(-u)^{\ell}e^{-n\ell\gamma}.
\end{align*}

\noindent
Then the Mellin transform of $\mathfrak{h}_{1,1}(\gamma,u)-\mathfrak{h}_{2,1}(\gamma,u)$ is given by
\begin{align*}
&\mathcal{H}_{1,1}(s,u)-\mathcal{H}_{2,1}(s,u)
=O\Bigg(\zeta(r+1)\mathrm{Li}_{s}(-u)\Gamma(s)\bigg(\sum_{i=0}^{r}\binom{r}{i}\frac{\zeta(s-i-1)}{\zeta(s-i+r)}K_r(s-i-1)\\
&\quad\quad\quad\quad\quad\quad\quad\quad\quad+\sum_{i=0}^{r}\binom{r}{i}\frac{\zeta(\sigma-i-1)\zeta(\sigma-i-1+r)\zeta(\sigma-i+r)}{\zeta(2(\sigma-i-1+r))}\zeta(r)E_r(\sigma-i-1)\\
&\quad\quad\quad\quad\quad\quad\quad\quad\quad\quad\quad\quad\quad\quad\quad\quad\quad\quad\quad\quad\quad\quad\quad\quad\quad-\frac{\zeta(s-1)\zeta(s-1-r)}{\zeta(r+1)}\bigg)\Bigg),
\end{align*}
where $s=\sigma+it$ with $\sigma,t\in\mathbb{R}$, and $K_r(s)$ and $E_r(s)$ are defined in~\eqref{arsdefinition} and~\eqref{erdefinition}, respectively. Differentiating with respect to $u$, we obtain
\begin{align*}
&\frac{\partial}{\partial u}(\mathcal{H}_{1,1}(s,u)-\mathcal{H}_{2,1}(s,u))
=O\Bigg(\frac{1}{u}\mathrm{Li}_{s-1}(-u)\Gamma(s)\zeta(r+1)\bigg(\sum_{i=0}^{r}\binom{r}{i}\frac{\zeta(s-i-1)}{\zeta(s-i+r)}K_r(s-i-1)\\
&\quad\quad\quad\quad\quad\quad\quad\quad\quad\quad\quad\quad+\sum_{i=0}^{r}\binom{r}{i}\frac{\zeta(\sigma-i-1)\zeta(\sigma-i-1+r)\zeta(\sigma-i+r)}{\zeta(2(\sigma-i-1+r))}\zeta(r)E_r(\sigma-i-1)\\
&\quad\quad\quad\quad\quad\quad\quad\quad\quad\quad\quad\quad\quad\quad\quad\quad\quad\quad\quad\quad\quad\quad\quad\quad\quad\quad\quad-\frac{\zeta(s-1)\zeta(s-1-r)}{\zeta(r+1)}\bigg)\Bigg).
\end{align*}

\noindent
By applying Theorem~\ref{cmtheorem}, yields
\begin{align*}
F_{\gamma u}(\tau,u)
=& O\Bigg(\frac{1}{u}\zeta(r+1)\bigg(\sum_{i=0}^{r}\binom{r}{i}\mathrm{Li}_{i+1}(-u)\Gamma(i+2)\frac{K_r(1)}{\zeta(r+2)}\tau^{-(i+2)}\\
+&\sum_{i=0}^{r}\binom{r}{i}\mathrm{Li}_{i+1}(-u)\Gamma(i+2)\frac{\zeta(r+1)\zeta(r+2)\zeta(r)E_r(1)}{\zeta(2(r+1))}\tau^{-(i+2)}-\mathrm{Li}_{r+1}(-u)\Gamma(r+2)\tau^{-(r+2)}\bigg)\Bigg)\\
=& O\left(\frac{N(r)}{u}\mathrm{Li}_{r+1}(-u)\tau^{-(r+2)}\Gamma(r+2)\right)
=O(n),
\end{align*}
where $N(r)$ is defined in~\eqref{nrdefinition}. In a similar fashion, we derive the remaining partial derivatives and get
\begin{align*}
&F_{u}(\tau,u)
=\sum_{k=1}^{\infty}\frac{\Delta_f(k)}{e^{k\tau}+u}
=O\left(\frac{N(r)}{u}\mathrm{Li}_{r+1}(-u)\tau^{-(r+1)}\Gamma(r+1)\right)
=O(n^{\frac{r+1}{r+2}}),\\
&F_{uu}(\tau,u)
=-\sum_{k=1}^{\infty}\frac{\Delta_f(k)}{(e^{k\tau}+u)^2}
=O\left(\frac{N(r)}{u^2}(\mathrm{Li}_{r}(-u)-\mathrm{Li}_{r+1}(-u))\Gamma(r+1)\tau^{-(r+1)}\right)
=O(n^{\frac{r+1}{r+2}}),\\
&F_{\gamma\gamma}(\tau,u)
=\sum_{k=1}^{\infty}\frac{uk^2\Delta_f(k)e^{k\tau}}{(e^{k\tau}+u)^2}
=O\left(N(r)\mathrm{Li}_{r+2}(-u)\Gamma(r+3)\tau^{-(r+3)}\right)
=O(n^{\frac{r+3}{r+2}}),\\
&F_{uuu}(\tau,u)
=\sum_{k=1}^{\infty}\frac{2\Delta_f(k)}{(e^{k\tau}+u)^3}
=O(\tau^{-(r+1)})
=O(n^{\frac{r+1}{r+2}}),\\
&F_{uu\gamma}(\tau,u)
=\sum_{k=1}^{\infty}\frac{2k\Delta_f(k)e^{k\tau}}{(e^{k\tau}+u)^3}
=O(\tau^{-(r+2)})
=O(n),\\
&F_{u\gamma\gamma}(\tau,u)
=\sum_{k=1}^{\infty}\frac{k^2\Delta_f(k)e^{k\tau}(e^{k\tau}-u)}{(e^{k\tau}+u)^3}
=O(\tau^{-(r+3)})
=O(n^{\frac{r+3}{r+2}}),\\
&F_{\gamma\gamma\gamma}(\tau,u)
=-\sum_{k=1}^{\infty}\frac{uk^3\Delta_f(k)e^{k\tau}(e^{k\tau}-u)}{(e^{k\tau}+u)^3}
=O(\tau^{-(r+4)})
=O(n^{\frac{r+4}{r+2}}),\\
&F_{uuu\gamma}(\tau,u)
=-\sum_{k=1}^{\infty}\frac{6k\Delta_f(k)e^{k\tau}}{(e^{k\tau}+u)^4}
=O(\tau^{-(r+2)})
=O(n),\\
&F_{uu\gamma\gamma}(\tau,u)
=-\sum_{k=1}^{\infty}\frac{2k^2\Delta_f(k)e^{k\tau}(e^{k\tau}-u)}{(e^{k\tau}+u)^4}
=O(\tau^{-(r+3)})
=O(n^{\frac{r+3}{r+2}}),\\
&F_{u\gamma\gamma\gamma}(\tau,u)
=-\sum_{k=1}^{\infty}\frac{k^3\Delta_f(k)e^{k\tau}(e^{2k\tau}-4ue^{k\tau}+u^2)}{(e^{k\tau}+u)^4}
=O(\tau^{-(r+4)})
=O(n^{\frac{r+4}{r+2}}),\\
&F_{\gamma\gamma\gamma\gamma}(\tau,u)
=\sum_{k=1}^{\infty}\frac{uk^4\Delta_f(k)e^{k\tau}(e^{2k\tau}-4ue^{k\tau}+u^2)}{(e^{k\tau}+u)^4}
=O(\tau^{-(r+5)})
=O(n^{\frac{r+5}{r+2}}).
\end{align*}

\noindent
The above bounds imply that the derivatives $\tau_u$, $\tau_{uu}$, and $\tau_{uuu}$ are each $O(n^{-1/(r+2)})$, uniformly with respect to $u$. Expanding $\tau(n, e^{\theta/\nu_{f,n}})$ and $F(\tau(e^{\theta/\nu_{f,n}},n), e^{\theta/\nu_{f,n}})$ around $\theta=0$ yields
\begin{align*}
\tau(n,e^{\theta/\nu_{f,n}})-\tau(n,1)
=\tau_{u}(n,1)\frac{\theta}{\nu_{f,n}}+\frac{\tau_{u}(n,1)+\tau_{uu}(n,1)}{2}\left(\frac{\theta}{\nu_{f,n}}\right)^2+O\left(n^{-1/(r+2)}\frac{|\theta|^3}{\nu^3_{f,n}}\right)
\end{align*}
and
\begin{align*}
&F(\tau(e^{\theta/\nu_{f,n}},n), e^{\theta/\nu_{f,n}})-F((\tau,1),1)\\
=&F_{\gamma}(\tau(n,1),1)\tau_{u}(n,1)+F_{u}(\tau(n,1),1)\frac{\theta}{\nu_{f,n}}
+\frac{1}{2}\bigg(F_{\gamma}(\tau(n,1),1)(\tau_{u}(n,1)+\tau_{uu}(n,1))\\
&+F_{\gamma\gamma}(\tau(n,1),1)\tau_{u}(n,1)^2+2F_{u\gamma}(\tau(n,1),1)\tau_{u}(n,1)+F_{u}(\tau(n,1),1)\\
&+F_{uu}(\tau(n,1),1)\bigg)\left(\frac{\theta}{\nu_{f,n}}\right)^2+O\left(n^{\frac{r+1}{r+2}}\frac{|\theta|^3}{\nu^3_{f,n}}\right),
\end{align*}
respectively. Substituting these expansions into the exponential in~\eqref{asymtoticsofmomentgeneratingfunction}, we obtain
\begin{align*}
&\left(n\tau_u(n,1)+F_{\gamma}(\eta,1)\tau_u(n,1)+F_u(\eta,1)-\mu_{f,n}\right)\frac{\theta}{\nu_{f,n}}+\frac{1}{2}\bigg(n(\tau_u(n,1)+\tau_{uu}(n,1))+F_{\gamma}(\eta,1)(\tau_u(n,1)\\
&+\tau_{uu}(n,1))
+F_{\gamma\gamma}(\eta,1)\tau_u(n,1)^2+2F_{u\gamma}(\eta,1)\tau_u(n,1)+F_{u}(\eta,1)+F_{uu}(\eta,1)\bigg)\left(\frac{\theta}{\nu_{f,n}}\right)^2\\
&+O\left(n^{\frac{r+1}{r+2}}\frac{|\theta^3|}{\nu^3_{f,n}}+n^{-2r/7(r+2)+1/(r+2)}\right).
\end{align*}
Here, we have written $\eta = \tau(n, 1)$ for brevity. Recalling that $n = -F_{\gamma}(\eta, 1)$ and $\tau_u(n, 1) = -\dfrac{F_{u\gamma}(\eta, 1)}{F_{\gamma\gamma}(\eta, 1)}$, we may simplify the final expression to derive
\begin{align*}
&(F_{u}(\eta,1)-\mu_{f,n})\frac{\theta}{\nu_{f,n}}+\frac{1}{2}\bigg(F_{u}(\eta,1)+F_{uu}(\eta,1)-\frac{F_{u\gamma}(\eta,1)^2}{F_{\gamma\gamma}(\eta,1)}\bigg)\left(\frac{\theta}{\nu_{f,n}}\right)^2\\
&+O\left(n^{\frac{r+1}{r+2}}\frac{|\theta^3|}{\nu^3_{f,n}}+n^{-2r/7(r+2)+1/(r+2)}\right).
\end{align*}

\noindent
In a similar manner, we obtain that
\begin{align*}
\frac{B^2(\tau(n,1),1)}{B^2(\tau(n,e^{\theta/\nu_{f,n}}),e^{\theta/\nu_{f,n}})}
=1+O\left(\frac{|\theta|}{\nu_{f,n}}\right).
\end{align*}

\noindent
Thus, we derive the following asymptotic expression for the moment generating function in~\eqref{asymtoticsofmomentgeneratingfunction}:
\begin{align*}
M_{f,n}(\theta)
=&\exp\Bigg((F_{u}(\eta,1)-\mu_{f,n})\frac{\theta}{\nu_{f,n}}+\frac{1}{2}\bigg(F_{u}(\eta,1)+F_{uu}(\eta,1)-\frac{F_{u\gamma}(\eta,1)^2}{F_{\gamma\gamma}(\eta,1)}\bigg)\left(\frac{\theta}{\nu_{f,n}}\right)^2\\
&+O\left(n^{\frac{r+1}{r+2}}\frac{|\theta^3|}{\nu^3_{f,n}}+n^{-2r/7(r+2)+1/(r+2)}\right)\Bigg).
\end{align*}

\noindent
Recall that we choose $\mu_{f,n}$ and $\nu_{f,n}$ in~\eqref{meansigma} and~\eqref{variancesigma} such that
\begin{align}\label{meanvarianceformulatosolve}
&\mu_{f,n}
=F_{u}(\eta,1)
=\sum_{k=1}^{\infty}\frac{\Delta_f(k)}{e^{\eta k}+1}\quad\text{and}\quad\nonumber\\
&\nu^2_{f,n}
=F_{u}(\eta,1)+F_{uu}(\eta,1)-\frac{F_{u\gamma}(\eta,1)^2}{F_{\gamma\gamma}(\eta,1)}
=\sum_{k=1}^{\infty}\frac{\Delta_f(k)e^{\eta k}}{(e^{\eta k}+1)^2}-\frac{\left(\sum_{k=1}^{\infty}\frac{\Delta_f(k)ke^{\eta k}}{(e^{\eta k}+1)^2}\right)^2}{\sum_{k=1}^{\infty}\frac{\Delta_f(k)k^2e^{\eta k}}{(e^{\eta k}+1)^2}}.
\end{align}

\noindent
From the definitions of $\mu_{f,n}$ and $\nu^2_{f,n}$ in~\eqref{meansigma} and~\eqref{variancesigma} respectively, we deduce that
\begin{align*}
M_{f,n}(\theta)
=&\exp\left(\frac{\theta^2}{2}+O\left(n^{-(r+1)/(r+2)}|\theta|+n^{-(r+1)/(r+2)}|\theta|^3+n^{-2r/7(r+2)+1/(r+2)}\right)\right)\\
=&\exp\left(\frac{\theta^2}{2}+O\left(n^{-2r/7(r+2)+1/(r+2)}\right)\right),
\end{align*}
for bounded $\theta$.
\end{proof}

\subsection{Formulae for the mean and variance} 

By applying Lemma~\ref{momentgeneratingfunctionlemma} and Curtiss' theorem~\cite{curtiss}, we establish that the distribution of ${\varpi}_{f,n}$ is indeed asymptotically normal. In this section, we derive an upper bound for its mean and variance, providing an explicit formula for the constant as a function of $r$.

\medskip

From~\eqref{meanvarianceformulatosolve} we write
\begin{align*}
&\mu_{f,n}
=F_{u}(\eta,1)
=O\left(N(r)\mathrm{Li}_{r+1}(-1)\Gamma(r+1)/\eta^{(r+1)}\right)
\end{align*}
and 
\begin{align*}
\nu^2_{f,n}
=&F_{u}(\eta,1)+F_{uu}(\eta,1)-\frac{F_{u\gamma}(\eta,1)^2}{F_{\gamma\gamma}(\eta,1)}\\
=& O\left(\left(N(r)\mathrm{Li}_{r}(-1)\Gamma(r+1)-N(r)\frac{\mathrm{Li}^2_{r}(-1)\Gamma^2(r+2)}{\mathrm{Li}_{r+1}(-1)\Gamma(r+3)}\right)\frac{1}{\eta^{(r+1)}}\right).
\end{align*}

\noindent
Utilising the saddle-point solution in~\eqref{saddlepointsolutionforsigmak}, we write
\begin{align*}
\frac{1}{\eta}
=\frac{1}{\tau(n,1)}
=O\left(\frac{n^{\frac{1}{r+2}}}{{N(r)\mathrm{Li}_{r+2}(-1)\Gamma(r+2)}}\right)
\end{align*}
then 
\begin{align*}
\mu_{f,n}
=O\left(C_{\mu}(r)n^{\frac{r+1}{r+2}}\right)
\end{align*}
and
\begin{align*}
\nu^2_{f,n}
=O\left(C_{\nu}(r)n^{\frac{r+1}{r+2}}\right),
\end{align*}
where 
\begin{align}\label{meanconstant}
C_{\mu}(r)
\sim -N(r){\mathrm{Li}_{r+1}(-1)\Gamma(r+1)}%{(\mathrm{Li}_{r+2}(-1)\Gamma(r+2))^{r+1}}
=N(r)(1-2^{r+1})\Gamma(r+1)\zeta(r+1)
\end{align}
and 
\begin{align}\label{varianceconstant}
C_{\sigma}(r)
\sim& -N(r)\left({\mathrm{Li}_{r}(-1)\Gamma(r+1)-\frac{\mathrm{Li}^2_{r}(-1)\Gamma^2(r+2)}{\mathrm{Li}_{r+1}(-1)\Gamma(r+3)}}\right)\nonumber\\%{(\mathrm{Li}_{r+2}(-1)\Gamma(r+2))^{r+1}}\\
=&N(r)\zeta(r)(1-2^{-r})\left(\Gamma(r+1)-\frac{\Gamma(r+2)\zeta(r)}{\Gamma(r+3)\zeta(r+1)}\left(1-\frac{1}{2^{r+1}-1}\right)\right),
\end{align}
$N(r)$ is given in~\eqref{nrdefinition} and
\begin{align*}
-\mathrm{Li}_{r}(-1)
=(1-2^{-r})\zeta(r+1).
\end{align*}
\noindent
In this context, it is worth noting that the key constant of interest is $N(r)$, which emerges from a detailed analysis of the Dirichlet series with the gap function $\mathcal{D}_f(\Delta;s)$.

\subsubsection{The distribution of the tail}

We conclude the proof by bounding the tail of the distribution, following the approach of Lipnik et al.~\cite{lipniketal}, which in turn was adapted from Hwang~\cite{hwang}. For $\theta=o(n^{\frac{r+1}{6(r+2)}})$, we apply the Chernoff bound (see, for instance,~\cite{chernoff}) to obtain
\begin{align*}
\mathbb{P}\left(\frac{\varpi_{f,n}-\mu_{f,n}}{\sigma_{f,n}}\geq x\right)
\leq e^{-\theta x} M_{f,n}(\theta)
=e^{-t\theta+\theta^2/2}\left(1+O\left(n^{\frac{r+1}{2(r+2)}}(|\theta|+|\theta|^3)+n^{-2r/7(r+2)+1/(r+2)}\right)\right).
\end{align*}

\noindent
Setting $T=n^{\frac{r+1}{6(r+2)}}/\log n$ we have two cases. For the first case, let $x\leq T$, and we set $\theta=x$, yielding
\begin{align*}
\mathbb{P}\left(\frac{\varpi_{f,n}-\mu_{f,n}}{\sigma_{f,n}}\geq x\right)  
\leq e^{-x^2/2}\left(1+O((\log n)^{-3})\right).
\end{align*}

\noindent
Similarly, for $x\geq T$, we set $\theta=T$, obtaining 
\begin{align*}
\mathbb{P}\left(\frac{\varpi_{f,n}-\mu_{f,n}}{\sigma_{f,n}}\geq x\right)  
\leq e^{-Tx/2}\left(1+O((\log n)^{-3})\right).
\end{align*}

\noindent
A similar argument can be applied to estimate the probability $\mathbb{P}\left(\frac{\varpi_{f,n}-\mu_{f,n}}{\sigma_{f,n}}\leq -x\right)$.

\medskip

\medskip
\noindent
{\it{Acknowledgement}.} We are grateful to Kyle Pratt and Alexandru Zaharescu for their valuable suggestions and many insightful discussions. A special thanks to Oleksiy Klurman for his comments on an earlier version of this paper. The first author is supported by EPSRC doctoral fellowship \#EP/T518049/1.

\medskip
\noindent
{\it{Rights Retention}}. For the purpose of open access, the author has applied a Creative Commons Attribution (CC BY) licence to any Author Accepted Manuscript version arising from this submission.


\begin{thebibliography}{11}

\bibitem{alexandru}
E.~Alkan and A.~H. Ledoan and A.~Zaharescu.
\newblock {Asymptotic behavior of the irrational factor}.
\newblock \textit{Acta Mathematica Hungarica}, 13(3):293--305, 2008.

\bibitem{apostol}
T.~M. Apostol.
\newblock \textit{Introduction to Analytic Number Theory}.
\newblock Springer New York, NY, 1 edition, 2013.

\bibitem{baluluca}
R.~Balasubramanian and F.~Luca.
\newblock {On the number of factorizations of an integer}.
\newblock \textit{Integers}, 11(2):139--143, 2011.

\bibitem{mobius}
D.~Basak and N.~Robles and A.~Zaharescu.
\newblock {Exponential sums over M{\"{o}}bius convolutions with applications to
  partitions}.
\newblock \textit{To appear in Canadian Journal of Mathematics}, 2025.

\bibitem{gregmartin}
M.~A. Bennett and G.~Martin and K.~O'Bryant and A.~Rechnttzer.
\newblock {Counting zeros of dirichlet $L$-functions}.
\newblock \textit{Mathematics of Computations}, 80(329):1455--1482, 2021.


\bibitem{chernoff}
H.~Chernoff.
\newblock {A measure of asymptotic efficiency for tests of a hypothesis based on
  the sum of observations}.
\newblock \textit{Ann. Math. Statistics}, 23:493--507, 1952.

\bibitem{curtiss}
J.~H. Curtiss.
\newblock {A note on the theory of moment generating functions}.
\newblock \textit{Ann. Math. Statistics}, 13:430--433, 1942.

\bibitem{taylor}
T.~Daniels.
\newblock {Bounds on the M{\"{o}}bius-signed partition numbers}.
\newblock \textit{Ramanujan J.}, 65:81--123, 2024.

\bibitem{taylor2}
T.~Daniels.
\newblock {Legendre-signed partition numbers}.
\newblock \textit{J. Math. Anal. Appl.}, 542(1):128717, 2025.

\bibitem{madhu}
M.~Das.
\newblock {Exponential sums with additive coefficients and its consequences to
  weighted partitions}.
\newblock \textit{Canadian Math J.}, vol.~To appear, p.~41, 2025.

\bibitem{semiprime}
M.~Das and N.~Robles and A.~Zaharescu and D.~Zeindler.
\newblock {Partitions into semiprimes}.
\newblock \textit{Pre-prints: \url{https://arxiv.org/abs/2212.12489}}, p.~37,
  2022.

\bibitem{debten}
G.~Debruyne and G.~Tenenbaum.
\newblock {The saddle-point method for general partition functions}.
\newblock \textit{Indag. Math.}, 31:728--738, 2020.

\bibitem{erdoslehner}
P.~Erd{\H{o}}s and J.~Lehner.
\newblock {The distribution of the number of summands in the partitions of a
  positive integer}.
\newblock \textit{Duke Math. J.}, 8:335--345, 1941.

\bibitem{flajoletetal}
P.~Flajolet and X.~Gourdon and P.~Dumas.
\newblock {Mellin transforms and asymptotics: harmonic sums}.
\newblock \textit{Special volume on mathematical analysis of algorithms},
  pp.~3--58, 1995.

\bibitem{hardyramanujan}
G.~H. Hardy and S.~Ramanujan.
\newblock {Asymptotic formulae in combinatory analysis}.
\newblock \textit{Proceedings of the London Mathematical Society.}, 17:75--115,
  1918.

\bibitem{hwang}
H.~K. Hwang.
\newblock {Limit theorems for the number of summands in integer partitions}.
\newblock \textit{J. Combin. Theory Ser. A}, 96(1):89--126, 2001.

\bibitem{iwanieckowlaski}
H.~Iwaniec and E.~Kowalski.
\newblock \textit{{Analytic Number Theory}}, volume {American Mathematical
  Society colloquium publications}.
\newblock American Mathematical Society, 2004.
\newblock ISBN 9780821836330.

\bibitem{klurman}
O.~Klurman.
\newblock {Correlations of multiplicative functions and applications}.
\newblock \textit{Compositio Math.}, 153(8):1622--1657, 2017.

\bibitem{kratzel}
E.~Kr{\"{a}}tzel.
\newblock {Zahlentheorie}.
\newblock \textit{Berlin: VEB Deutscher Verlag der Wissenschaften}, p.~130, 1981.


\bibitem{lichen2}
Y.~L. Li and Y.~G. Chen.
\newblock {On the {$r$}-th root partition function}.
\newblock \textit{Taiwanese J. Math.}, 20(3):545--551, 2016.

\bibitem{lichen}
Y.~L. Li and Y.~G. Chen.
\newblock {On the {$r$}-th root partition function, II}.
\newblock \textit{J. Number Theory}, 188:392--409, 2018.

\bibitem{lipniketal}
G.~Lipnik and M.~Madritsch and R.~Tichy.
\newblock {A central limit theorem for integer partitions into small powers}.
\newblock \textit{Monatsh Math J.}, 203:149--173, 2024.

\bibitem{madwag}
M.~Madritsch and S.~Wagner.
\newblock {A central limit theorem for integer partitions}.
\newblock \textit{Monatsh. Math.}, 161(1):85--114, 2010.

\bibitem{sacha}
A.~Mangerel.
\newblock {Gap problems for integer-valued multiplicative functions}.
\newblock \textit{Pre-prints: arxiv {\url{https://arxiv.org/abs/2311.11636}}},
  pp.~1--45, 2023.

\bibitem{montgomeryvaughan}
H.L. Montgomery and R.C. Vaughan.
\newblock {Exponential Sums with Multiplicative Coefficients}.
\newblock \textit{Inventiones mathematicae}, 43:69--82, 1977.

\bibitem{montgomeryvaughanbook}
H.~L. Montgomery and R.~C. Vaughan.
\newblock \textit{Multiplicative Number Theory I}.
\newblock Cambridge University Press, 2006.

\bibitem{ramare}
O.~Ramar{\'{e}}.
\newblock {Accurate computations of Euler products over primes in arithmetic
  progressions}.
\newblock \textit{Funct. Approx. Comment. Math.}, 65(1):33--45, 2021.

\bibitem{ramanujan}
S.~Ramanujan.
\newblock {On certain arithmetical functions}.
\newblock \textit{Transactions of the Cambridge Philosophical Society},
  XXII(9):159--184, 1916.

\bibitem{richmond1}
L.~B. Richmond.
\newblock {The moments of partitions, I}.
\newblock \textit{Acta arithmetica}, 26(XXVI):411--425, 1975.

\bibitem{richmond2}
L.~B. Richmond.
\newblock {The moments of partitions, II}.
\newblock \textit{Acta arithmetica}, 28(XXVIII):229--243, 1975.


\bibitem{titchmarsh}
E.C. Titchmarsh.
\newblock \textit{The theory of functions}.
\newblock Oxford University Press, London, 2 edition, 1939.

\bibitem{vaughan}
R.~C. Vaughan.
\newblock {On the number of partitions into primes}.
\newblock \textit{Ramanujan J}, 15:109--121, 2008.

\end{thebibliography}
\end{document}